\newcommand{\C}{{\mathbb{C}}}
\newcommand{\R}{{\mathbb{R}}}
\newcommand{\Z}{{\mathbb{Z}}}
\newcommand{\cU}{{\mathcal{U}}}
\newcommand{\CC}{{\mathbb{C}}}  
\newcommand{\NN}{{\mathbb{N}}}  
\newcommand{\PP}{{\mathbb{P}}}  
\newcommand{\RR}{{\mathbb{R}}}  
\renewcommand{\SS}{{\mathbb{S}}} 
\newcommand{\ZZ}{{\mathbb{Z}}}  
\newcommand{\supp}{{\operatorname{supp}}}  
\newcommand{\U}{{\operatorname{U}}}  
\newcommand{\hook}{{\lrcorner\,}} 
\newcommand{\limit}[1]{{\underset{#1}{\lim}}} 
\newcommand{\red}[1]{{/\!/\!_{#1}\,}} 
\newcommand{\del}{\partial}  
\newcommand{\delbar}{\overline{\partial}}  
\newcommand{\eps}{\varepsilon}  
\newcommand{\inv}{^{-1}} 
\newcommand{\cut}{\operatorname{cut}} 
\newtheorem{theorem}{Theorem}[section]
\newtheorem{proposition}[theorem]{Proposition}
\newtheorem{corollary}[theorem]{Corollary}
\newtheorem{lemma}[theorem]{Lemma}
\theoremstyle{definition}
\newtheorem{definition}[theorem]{Definition}
\theoremstyle{remark}
\newtheorem{remark}[theorem]{Remark}
\numberwithin{equation}{section}
\begin{document}

\title{Tame Circle Actions}


\author{Susan Tolman}
\address{Department of Mathematics, University of Illinois at Urbana-Champaign, Urbana, Illinois, USA 61801}
\email{stolman@math.uiuc.edu}

\author{Jordan Watts}
\address{Department of Mathematics, University of Colorado Boulder, Boulder, Colorado, USA 80309}
\email{jordan.watts@colorado.edu}

\thanks{Susan Tolman is partially supported by National Science Foundation
Grant DMS-1206365.
Jordan Watts thanks the University of Illinois at Urbana-Champaign for their support. Moreover, this manuscript was significantly improved by suggestions from an anonymous referee; the authors extend their thanks.}

\subjclass[2010]{Primary 53D20, Secondary 53D05, 53B35}

\keywords{tamed symplectic form, symplectic reduction, blow-up, symplectic cutting, holomorphic action,  
Hamiltonian action, Kaehler manifold, moment map}

\date{June 15, 2017}


\begin{abstract}
In this paper, we consider 
Sjamaar's holomorphic slice theorem,
the birational equivalence theorem of Guillemin and Sternberg,
and a number of important standard constructions that 
work for Hamiltonian circle actions in both the symplectic category and the K\"ahler category:
reduction, cutting, and blow-up. 
In each case, we show that the theory extends to Hamiltonian circle
actions on complex manifolds with tamed symplectic forms. (At least,
the theory extends if the fixed points are isolated.)

Our main motivation for this paper is  that the
first author needs the machinery that we develop here to
construct a non-Hamiltonian symplectic circle action on a closed, connected six-dimensional symplectic manifold with exactly 32 fixed points; this answers
an open question in symplectic geometry.
However, we also believe that the setting we work in is intrinsically interesting, and elucidates the key role played by the following fact: the moment image
of $e^t \cdot x$ increases as $t \in \R$ increases.
\end{abstract}

\maketitle

\section{Introduction}\label{s:introduction}

In this paper, we consider 
Sjamaar's holomorphic slice theorem \cite{reyer},
the birational equivalence theorem of Guillemin and Sternberg \cite{GS89},
and a number of important standard constructions that 
work for Hamiltonian circle actions in both the symplectic category and the K\"ahler category:
reduction, cutting \cite{Le}, and blow-up.
In each case, we show that the theory extends to Hamiltonian circle
actions on complex manifolds with tamed symplectic forms. (At least,
the theory extends if the fixed points are isolated.)

Our main motivation for this paper is 
the following question, which appears in McDuff and Salamon \cite{MS},
and is often referred to as the ``McDuff conjecture'': {\it Does there exist a non-Hamiltonian symplectic circle action with isolated fixed points on a closed, connected symplectic manifold?}
The first author needs the machinery that we develop here to
answer this question by constructing a non-Hamiltonian
symplectic circle action on  a closed, connected  six-dimensional symplectic manifold 
with exactly 32 isolated fixed points in \cite{T}.
Propositions~\ref{p:reduction}, \ref{p:prop}, and \ref{p:with crit pts orbifold}
play a key role in ``adding'' the 
fixed points and analysing the resulting manifold in that paper.

Because of this motivation, we focus on the case that the fixed points are isolated, sometimes allow orbifolds with isolated $\Z_2$-singularities,
and  work with a slight generalisation
of tamed forms.  To explain concretely, we  introduce some notation.
Let $\CC^\times $ act holomorphically on a complex manifold $(M,J)$.
Let  $\xi_M$ denote the vector field on $M$ induced by the restricted action
of $\R/2 \pi \Z \cong \SS^1 \subset \CC^\times$.
Let $M^{\SS^1}$ be the set of points fixed by this action, and  let  $\Omega^k(M)^{\SS^1}$ be the set of $\SS^1$-invariant $k$-forms.  
Consider a symplectic form $\omega \in \Omega^2(M)^{\SS^1}$ and assume
that $\Psi \colon M \to \R$ is a moment map, i.e., $\xi_M \hook \omega = - d \Psi$.
Recall that $J$ {\bf tames} $\omega$ if  $\omega( v,J(v)) > 0$ for all non-zero tangent vectors $v$.
In this paper, we also work with the following significantly weaker condition:
We say that the action {\bf tames} $\omega$ if
$\omega(\xi_M,J(\xi_M)) > 0$ on $M \smallsetminus M^{\SS^1}$.

We believe that this setting is intrinsically interesting, and 
hope that it will prove to be a fruitful source of examples and 
counter-examples. 
To see why, 
note that if $\omega \in \Omega^2(M)$ is a K\"ahler form (and $M$ is compact), then the logarithm of
the Duistermaat-Heckman function $\mu$ is a concave function on the moment map
image $\Psi(M)$ \cite{Gr}; in particular,  $\mu$ has no strict local minima on the 
interior  $\Psi(M)^\circ$.
If $\omega$ is  tamed by $J$ instead,
then the Duistermaat-Heckman function
need not be log-concave; nevertheless, if $\dim_\R M = 6$, then
$\mu$ cannot have a strict local minimum at certain $a \in \R$, e.g.,
if $\SS^1 \subset \CC^\times$ acts freely on $\Psi\inv(a)$.
In contrast, if $\omega$ is merely tamed by the action, 
then the Duistermaat-Heckman function  can have strict local minima at such values.
Thus, for some of the key pieces that the first author  used to construct the non-Hamiltonian example, 
the symplectic form was tamed by the action but was not (and could not be)
K\"ahler, or even tamed by $J$ \cite{T}.


Since $\omega( \cdot, J (\cdot))$ may no longer be a metric  for tame symplectic forms,
some of the proofs that work for K\"ahler manifolds
do not work for this larger class without significant modification.
However, 
in the K\"ahler case the gradient flow $\nabla \Psi$ is equal to $-J(\xi_M)$, which is the vector field induced by the $\R$-action
given by $(t,x) \mapsto e^t \cdot x$ for all $t\in \R$ and $x \in M$.
Hence,  the function $t \mapsto \Psi(e^t \cdot x)$ is  increasing.
This  remains true in our setting; we will use it
repeatedly throughout this paper.
\begin{lemma}\label{l:monotone}
Let $(M,J)$ be a complex manifold with a holomorphic $\CC^\times$-action, a symplectic form $\omega\in\Omega^2(M)^{\SS^1}$ tamed by the action, and a moment map $\Psi\colon M\to\RR$.  
Then the function $t\mapsto\Psi(e^t\cdot x)$ is strictly  increasing for  all $x\in M\smallsetminus M^{\SS^1}$.
\end{lemma}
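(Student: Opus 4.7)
The plan is to compute the derivative of $t \mapsto \Psi(e^t \cdot x)$ directly and show it is strictly positive on $M \setminus M^{\SS^1}$.

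First, I would identify the vector field generating the $\RR$-action $(t,x) \mapsto e^t \cdot x$. The Lie algebra of $\CC^\times$ is $\CC$, and the differential of the action at the identity is a complex-linear map $\CC \to T_xM$ (here is where holomorphicity of the action enters). Since $i \in \CC$ generates the $\SS^1$-action and produces $\xi_M$, the element $1 \in \CC$ (which generates the $\RR_{>0}$-action) must produce $-J(\xi_M)$. Thus the flow of $-J(\xi_M)$ is $(t,x) \mapsto e^t \cdot x$.

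Next, I would apply the chain rule and the moment map condition:
\[
\frac{d}{dt}\Psi(e^t\cdot x) = d\Psi\bigl(-J(\xi_M)\bigr)\big|_{e^t\cdot x} = -\bigl(\xi_M \hook \omega\bigr)\bigl(-J(\xi_M)\bigr)\big|_{e^t\cdot x} \cdot (-1),
\]
which after unwinding signs using $\xi_M \hook \omega = -d\Psi$ becomes $\omega(\xi_M, J(\xi_M))$ evaluated at $e^t\cdot x$. By the taming hypothesis, this is strictly positive provided $e^t \cdot x \notin M^{\SS^1}$.

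The last step is the observation that the $\RR$- and $\SS^1$-subactions commute (they come from commuting subgroups of the abelian group $\CC^\times$), so the $\RR$-orbit of any point in $M \setminus M^{\SS^1}$ remains in $M \setminus M^{\SS^1}$: if $e^t \cdot x$ were $\SS^1$-fixed, then $x = e^{-t}\cdot(e^t\cdot x)$ would be $\SS^1$-fixed too. Hence the derivative is strictly positive for all $t \in \RR$, proving strict monotonicity. I do not expect any significant obstacle; the only subtle point is the sign bookkeeping and the identification of the $\RR_{>0}$-generator with $-J(\xi_M)$, which is why holomorphicity of the $\CC^\times$-action is essential.
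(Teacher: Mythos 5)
Your proposal is correct and follows essentially the same route as the paper: identify $-J(\xi_M)$ as the generator of the $\RR_{>0}$-action (using holomorphicity), differentiate, apply $\xi_M \hook \omega = -d\Psi$ to get $\omega(\xi_M,J(\xi_M))\big|_{e^t\cdot x}>0$, and note that the orbit stays out of $M^{\SS^1}$. One small slip: the intermediate expression $-\bigl(\xi_M \hook \omega\bigr)\bigl(-J(\xi_M)\bigr)\cdot(-1)$ carries a spurious extra factor of $-1$ (since $d\Psi(-J\xi_M) = -\bigl(\xi_M\hook\omega\bigr)(-J\xi_M) = \omega(\xi_M,J\xi_M)$ already), though you arrive at the right final quantity.
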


\begin{proof}
Given $x\in M\smallsetminus M^{\SS^1}$, let $\gamma_x(t):=\Psi(e^t\cdot x)$.  Then for all $t\in\RR$, $$\dot\gamma_x(t)=d\Psi(-J(\xi_M))\Big|_{e^t\cdot x}=\omega(\xi_M,J(\xi_M))\Big|_{e^t\cdot x}>0.$$
\end{proof}

\section{Tame Local Normal Form}\label{s:local normal form}


The goal of this section is to prove a $\CC^\times$-equivariant holomorphic Bochner linearisation theorem in our setting of a symplectic structure tamed by the action.
  (More precisely, we develop a $\CC^\times$-equivariant holomorphic local normal form for a neighbourhood of a finite number of fixed points in the same moment fibre.)  Our proof is adapted from Sjamaar's proof of the  holomorphic slice theorem for K\"ahler actions \cite[Section 1]{reyer}; see also \cite{HL}.

\begin{proposition}\label{p:local normal form}
Let $(M,J)$ be a complex manifold with a holomorphic $\CC^\times$-action, a symplectic form $\omega\in\Omega^2(M)^{\SS^1}$ tamed by the action, and a moment map $\Psi\colon M\to\RR$. Given $\{p_1,\dots,p_k\} \in M^{\SS^1} \cap \Psi\inv(0)$,
there exists a $\CC^\times$-invariant neighbourhood  of $\{p_1,\dots,p_k\}$ in $M$ which is $\CC^\times$-equivariantly biholomorphic to a neighbourhood of $\coprod_{j=1}^k\{0\}$ in $\coprod_{j=1}^k\CC^n$, where $\CC^\times$ acts linearly on each $\CC^n$.
\end{proposition}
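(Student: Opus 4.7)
The plan is to first apply the classical holomorphic Bochner linearization theorem at each fixed point to obtain an $\SS^1$-equivariant chart, then observe that holomorphy upgrades this to local $\CC^\times$-equivariance automatically, and finally enlarge to a $\CC^\times$-invariant neighbourhood by flowing along the $\RR$-subgroup $\{e^t\}_{t\in\RR}\subset\CC^\times$, with Lemma~\ref{l:monotone} providing the crucial mechanism that rules out flow lines looping back.

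For each $p_j$, since the compact group $\SS^1$ acts holomorphically, the holomorphic Bochner theorem yields an $\SS^1$-invariant open neighbourhood $U_j$ of $p_j$ and an $\SS^1$-equivariant biholomorphism $\phi_j\colon U_j\to V_j\subset\CC^n$, where $\SS^1$ acts linearly on $\CC^n$ with integer weights $(a_{j,1},\dots,a_{j,n})$. Because $\phi_j$ is holomorphic, $d\phi_j\circ J = J_{\CC^n}\circ d\phi_j$; because it is $\SS^1$-equivariant, $d\phi_j(\xi_M)$ equals the linear $\SS^1$-generator on $\CC^n$; therefore $d\phi_j(-J(\xi_M))$ is the generator of the $\RR$-subgroup $\{e^t\}\subset\CC^\times$ of the standard linear action on $\CC^n$. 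By uniqueness of flows, $\phi_j(e^t\cdot x) = e^t\cdot\phi_j(x)$ whenever the arc $\{e^s\cdot x : s\in[0,t]\}$ stays inside $U_j$; combined with $\SS^1$-equivariance, this gives local $\CC^\times$-equivariance.

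The main obstacle is extending $\phi_j$ to a $\CC^\times$-equivariant biholomorphism on the $\CC^\times$-saturation $\tilde U_j := \CC^\times\cdot U_j$, via the candidate formula $\tilde\phi_j(g\cdot x) := g\cdot\phi_j(x)$. Well-definedness requires that whenever both $x$ and $e^t\cdot x$ lie in $U_j$, the entire arc $\{e^s\cdot x : s\in[0,t]\}$ also lies in $U_j$; otherwise an $M$-flow line could exit $U_j$ and re-enter, causing the Bochner chart at the re-entry to disagree with the value prescribed by the $\CC^\times$-action. Here Lemma~\ref{l:monotone} is decisive: it forces $s\mapsto\Psi(e^s\cdot x)$ to be strictly monotone, so no $\RR$-orbit revisits any $\Psi$-level set. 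After shrinking $U_j$ to the form $\phi_j^{-1}(D\cap\psi^{-1}((-\eps,\eps)))$, where $D$ is a small polydisk in $\CC^n$ and $\psi$ is the linear moment map for the model action, the same computation as in Lemma~\ref{l:monotone} yields strict monotonicity of $\psi$ along the model flow; then a direct calculation shows that the model flow enters and leaves $D\cap\psi^{-1}((-\eps,\eps))$ at most once (each $|z_i|$ and $\psi$ is monotone in $s$), and combining with the local equivariance rules out re-entry of the $M$-flow into $U_j$. Hence $\tilde\phi_j$ is well-defined, and standard arguments promote it to a $\CC^\times$-equivariant biholomorphism onto $\CC^\times\cdot V_j$.

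Finally, to handle the multi-point case, I would shrink the $U_j$ so that the $\tilde U_j$ are pairwise disjoint. The hypothesis $\Psi(p_j)=0$ for all $j$ is crucial here: by Lemma~\ref{l:monotone}, any sequence of $\CC^\times$-orbits simultaneously meeting shrinking neighbourhoods of $p_j$ and $p_l$ produces, after bounding and extracting a subsequence in the $\CC^\times$-factor, a limit forcing $p_j = p_l$. Taking the disjoint union of the $\tilde\phi_j$ then yields the desired $\CC^\times$-equivariant biholomorphism onto a neighbourhood of $\coprod_j\{0\}$ in $\coprod_j\CC^n$.
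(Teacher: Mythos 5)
Your overall strategy --- Bochner linearisation, local $\CC^\times$-equivariance from holomorphy, and then extension by flowing --- is the same as the paper's, and you correctly identify that the crux is to rule out an $\RR$-orbit exiting and re-entering the Bochner chart. But the step where you actually rule out re-entry has a genuine gap.

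You argue that $D\cap\psi^{-1}((-\eps,\eps))$ is orbitally convex in the linear model (true: each $|z_i|$ and the model moment map $\psi$ are monotone along the linear flow), and then claim that ``combining with the local equivariance rules out re-entry of the $M$-flow into $U_j$.'' This inference does not hold. Local equivariance identifies the $M$-flow with the model flow only for as long as the trajectory remains inside the chart domain. Once the $M$-flow exits $U_j$ and wanders through $M$, it can return to $U_j$ at a point whose Bochner coordinates lie on an entirely different model flow line; orbital convexity of the model set gives no control over this. Moreover, the linear model moment map $\psi$ has no meaning outside the chart, so its monotonicity provides no global obstruction whatsoever. The only globally monotone quantity available is the actual moment map $\Psi$, via Lemma~\ref{l:monotone}. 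This is exactly why the paper's Lemma~\ref{l:vectorspace2} works with $\Psi$ (pulled back to the chart), not with the linear $\psi$, and why it constructs the shrunken neighbourhood $V$ so that any model trajectory that exits $V$ forward in finite time has already crossed $\Psi>\delta$ \emph{inside} $V$, and symmetrically $\Psi<-\delta$ for backward exit. With that in hand, re-entry of the $M$-flow is killed by global monotonicity of $\Psi$: a hypothetical re-entry would force $\Psi$ to exceed $\delta$ at some earlier time and drop below $-\delta$ at some later time, a contradiction. Your proposal never establishes the property ``exit forces $|\Psi|>\delta$ inside the neighbourhood,'' and without it the re-entry argument collapses. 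To repair the proof you would need to replace $\psi$ by the pulled-back $\Psi$ and then carry out a construction like Lemma~\ref{l:vectorspace2}, whose point is precisely to relate the \emph{geometry} of the exit set to the \emph{size} of $|\Psi|$ there.

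A smaller remark: handling the $p_j$ one at a time and then arguing that the saturations $\widetilde{U}_j$ can be made pairwise disjoint is extra work that you only sketch. The paper sidesteps this entirely by running the orbital-convexity argument on the disjoint union $V=\coprod_j V_j$ at once; orbital convexity of $\varphi(V)$ then forces the $\CC^\times\cdot\varphi(V_j)$ to be pairwise disjoint automatically (a connected arc of the flow cannot jump between components of the open set $\varphi(V)$). Your ``bound the $\CC^\times$-factor and extract a subsequence'' sketch does not obviously bound the non-compact $\RR$-part of $\CC^\times$; again the fix goes through the $\Psi>\delta$ / $\Psi<-\delta$ dichotomy rather than through boundedness.
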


To prove this, we need a holomorphic version of the 
Bochner linearisation theorem 
for compact group actions on complex manifolds.  While this is well-known (\emph{cf.}, for example, \cite{GGK}), 
we include the proof for completeness.  Since we consider actions on orbifolds in Proposition~\ref{p:blowup2}, we will prove the orbifold version.  Here, if a group $G$ acts on a complex orbifold $(M,J)$, we say that the action \textbf{respects $J$} if every $g \in G$ induces an automorphism of $(M,J)$.

\begin{lemma}\label{l:bochner}
Let a compact Lie group $G$ act on a complex orbifold $(M,J)$; 
assume that the action respects $J$. 
Given $p\in M^G$, there exists a $G$-equivariant biholomorphism from a
$G$-invariant neighbourhood of $0\in T_pM$ to a neighbourhood of $p\in M$.
\end{lemma}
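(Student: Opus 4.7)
The plan is to adapt the classical smooth Bochner linearisation proof, carried out inside a holomorphic uniformising chart and made compatible with the orbifold structure group. Choose a uniformising chart $\pi\colon \tilde V \to V$ about $p$, with $\tilde V \subset \CC^n$ a connected open neighbourhood of $0$, $\pi(0) = p$, and uniformising group $\Gamma_p$. By Cartan's linearisation theorem for finite holomorphic actions fixing a point, after shrinking and changing holomorphic coordinates we may assume that $\Gamma_p$ acts linearly on $\tilde V$ via its isotropy representation on $T_0 \tilde V$. Since $G$ fixes $p$ and is compact, after shrinking $\tilde V$ further the $G$-action on $V$ lifts to a holomorphic $G$-action on $\tilde V$ fixing $0$ and commuting with $\Gamma_p$; this is the standard orbifold lift of a compact group action at a fixed point.

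Next, define
$$\Phi(v) \; := \; \int_G (dg|_0)^{-1} \cdot (g \cdot v) \, dg,$$
where $dg$ is normalised Haar measure on $G$. For each $g \in G$ the integrand is holomorphic in $v$, and the Bochner integral of a compactly parametrised family of holomorphic maps is again holomorphic (for instance by interchanging the Haar integral with Cauchy's integral formula on complex one-discs). Thus $\Phi$ is holomorphic on a sufficiently small $G \times \Gamma_p$-invariant neighbourhood of $0$ in $\tilde V$. A direct calculation gives $\Phi(0) = 0$ and $d\Phi_0 = \id$, so the holomorphic inverse function theorem makes $\Phi$ a local biholomorphism. The substitution $g \mapsto gh^{-1}$ in the defining integral yields $\Phi(h \cdot v) = dh|_0 \cdot \Phi(v)$, so $\Phi$ intertwines the nonlinear $G$-action on $\tilde V$ with the linear isotropy representation on $T_0 \tilde V$. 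Since the $G$- and $\Gamma_p$-actions commute and $\Gamma_p$ already acts linearly, $\Phi$ is also $\Gamma_p$-equivariant; hence $\Phi^{-1}$ descends to a $G$-equivariant biholomorphism from a neighbourhood of $0 \in T_p M$ to a neighbourhood of $p \in M$.

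The only genuinely new point compared to the smooth Bochner proof is holomorphicity of $\Phi$, which is routine. The main preliminary work is the orbifold reduction: first linearising $\Gamma_p$ via Cartan's theorem, and then lifting the $G$-action so that it commutes with $\Gamma_p$. Once these are in place, a single average over $G$ automatically produces a map equivariant for both groups, and the rest is a direct transcription of the smooth Bochner argument to the holomorphic setting.
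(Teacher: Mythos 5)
Your averaging construction, the holomorphicity of the Bochner integral, the Jacobian computation, and the appeal to the inverse function theorem are all sound, and the core idea is the same as the paper's. The gap is in the orbifold set-up. You assert that ``after shrinking $\tilde V$ further the $G$-action on $V$ lifts to a holomorphic $G$-action on $\tilde V$ fixing $0$ and commuting with $\Gamma_p$,'' and call this the standard orbifold lift. That is not what the standard result provides: what one obtains is an action of an \emph{extension} $\widetilde{G}$ of $G$ by $\Gamma_p$, fitting into $1\to\Gamma_p\to\widetilde{G}\to G\to 1$, acting on $\tilde V$ compatibly with the quotient map. This extension need not split, so in general there is no genuine $G$-action on $\tilde V$ at all, and your integral $\int_G (dg|_0)^{-1}(g\cdot v)\,dg$ is not even defined. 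A concrete holomorphic counterexample in the setting of the lemma: the effective rotation action of $\SS^1$ on $\CC/\ZZ_n$ for $n>1$, given by $\theta\cdot[z]=[e^{i\theta/n}z]$, fixes the cone point $[0]$ with $\Gamma_p=\ZZ_n$, and the group of lifts to $\CC$ is the $n$-fold connected cover $\SS^1\to\SS^1$, which admits no continuous homomorphic section.

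The repair, which is exactly what the paper does, is to note that $\widetilde{G}$ is still compact (a finite extension of a compact group) and to average over $\widetilde{G}$ rather than over $G$. The resulting map is automatically $\widetilde{G}$-equivariant, hence in particular $\Gamma_p$-equivariant, and therefore descends to a $G$-equivariant biholomorphism between a neighbourhood of $0\in T_pM$ and a neighbourhood of $p\in M$. This also makes your preliminary Cartan linearisation of $\Gamma_p$ and the separate commutativity check unnecessary: a single average over $\widetilde{G}$ does everything at once.
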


\begin{proof}
Let $\Gamma$ be the (orbifold) isotropy group of $p$.  
There exists an extension $\widetilde{G}$ of $G$ by $\Gamma$, an action of $\widetilde{G}$ on an open set $\widetilde{U}\subset\CC^n$ 
that respects the complex structure and fixes $\widetilde{p}\in\widetilde{U}$,
 and a $G$-equivariant biholomorphism from $\widetilde{U}/\Gamma$ to $M$ that sends $[\widetilde{p}]$ to $p$. (See, for example, \cite{LT}.)
  Let $\psi:\widetilde{U}\to T_{\widetilde{p}}\widetilde{U}$ be any holomorphic map whose differential at $\widetilde{p}$ is the identity map on $T_{\widetilde{p}}\widetilde{U}$.  Since $\widetilde{G}$ is compact, we can average $\psi$ to obtain a $\widetilde{G}$-equivariant map $\bar{\psi} \colon \widetilde{U}\to T_{\widetilde{p}}\widetilde{U}$ such that $d\bar{\psi}|_{\widetilde{p}}$ is equal to the identity, defined by $$\bar{\psi}(q):=\int_{\widetilde{G}}g_*\psi(g^{-1}\cdot q)dg$$ for all $q\in \widetilde{U}$, where $dg$ is the Haar measure on $\widetilde{G}$.  Since the action respects the complex structure, the map $\bar{\psi}$ is holomorphic.  By the inverse function theorem, we can invert $\bar{\psi}$ on a neighbourhood of $\widetilde{p}$ to construct the required biholomorphism.
\end{proof}

Let $(M_1,J_1)$ and $(M_2,J_2)$ be complex manifolds with holomorphic $\CC^\times$-actions, $A$ be an $\SS^1$-invariant  open subset of $M_1$,
and $\varphi \colon A\to M_2$ be an $\SS^1$-equivariant holomorphic map.
Then $\varphi$ sends the vector  
$J_1(\xi_{M_1})\big|_{x}$ to the vector  $J_2(\xi_{M_2})\big|_{\varphi(x)}$ for all $x \in A$.
Hence, if $(t_-,t_+)$ is the connected component of $\{t \in \R \mid e^t \cdot x \in A \}$ containing $0$, then 
\begin{equation}\label{e:R action}
\varphi(e^t\cdot x)=e^t\cdot\varphi(x) \mbox{ for all } t \in (t_-,t_+).
\end{equation}
However, \eqref{e:R action} need not hold for all $t \in \R$ such that $e^t\cdot x\in A$.  This motivates the following definition.

\begin{definition}\label{d:orb convx}
Let $(M,J)$ be a complex manifold with a holomorphic $\CC^\times$-action. 
Then a subset $A\subseteq M$ is \textbf{orbitally convex} 
(with respect to the $\CC^\times$-action) if it is $\SS^1$-invariant
 and the set $\{ t \in \R  \mid e^{t} \cdot x \in A \}$ is connected for all $x \in A$.
\end{definition}

The following proposition, which is Proposition 1.4 in \cite{reyer}, is a consequence of the discussion above.

\begin{proposition}\label{p:loc to glob action}
Let $(M_1,J_1)$ and $(M_2,J_2)$ be complex manifolds with holomorphic $\CC^\times$-actions.  Assume that $A\subseteq M_1$ is an orbitally convex open set, 
and $\varphi \colon A\to M_2$ is an $\SS^1$-equivariant holomorphic map.  Then, $\varphi$ extends to a $\CC^\times$-equivariant holomorphic map $\widetilde{\varphi} \colon \CC^\times\cdot A\to M_2$. 
Consequently, if $\varphi(A)$ is open and orbitally convex in $M_2$ and $\varphi \colon A\to \varphi(A)$ is biholomorphic, then $\widetilde{\varphi}$ is biholomorphic onto the open set $\CC^\times\cdot \varphi(A)$.
\end{proposition}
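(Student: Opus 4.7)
The plan is to define $\widetilde\varphi\colon\CC^\times\cdot A\to M_2$ by $\widetilde\varphi(z\cdot x) := z\cdot\varphi(x)$ for $z\in\CC^\times$ and $x\in A$, and then to verify that this assignment is well-defined, holomorphic, and $\CC^\times$-equivariant. The main obstacle is well-definedness: whenever $w\cdot x_1 = x_2$ with $w\in\CC^\times$ and $x_1,x_2\in A$, one must show that $w\cdot\varphi(x_1) = \varphi(x_2)$. This is precisely where orbital convexity enters, since \eqref{e:R action} only guarantees the desired compatibility inside the connected component of the parameter set containing $0$.

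To settle well-definedness, write $w = e^t\cdot e^{is}$ with $t,s\in\RR$. Since $A$ is $\SS^1$-invariant, $y := e^{is}\cdot x_1$ lies in $A$, and $\SS^1$-equivariance of $\varphi$ gives $\varphi(y) = e^{is}\cdot\varphi(x_1)$. The trajectory $r\mapsto e^r\cdot y$ satisfies $e^0\cdot y = y\in A$ and $e^t\cdot y = x_2\in A$, so both $0$ and $t$ lie in the (open) set $\{r\in\RR\mid e^r\cdot y\in A\}$; by orbital convexity this set is connected, hence an open interval containing both $0$ and $t$. Applying \eqref{e:R action} on this interval and evaluating at $r=t$ yields $\varphi(x_2) = e^t\cdot\varphi(y) = e^t e^{is}\cdot\varphi(x_1) = w\cdot\varphi(x_1)$, as required.

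For the remaining properties, fix $y = z\cdot x\in\CC^\times\cdot A$ with $x\in A$. Openness of $A$ and continuity of the $\CC^\times$-action provide a neighbourhood $V$ of $y$ on which $y'\mapsto z\inv\cdot y'$ takes values in $A$; on $V$ the formula $\widetilde\varphi(y') = z\cdot\varphi(z\inv\cdot y')$ displays $\widetilde\varphi$ as a composition of holomorphic maps, and $\CC^\times$-equivariance is immediate from the defining formula. For the biholomorphism statement, suppose $\varphi(A)$ is open and orbitally convex and $\varphi\colon A\to\varphi(A)$ is biholomorphic. Applying the same construction to $\varphi\inv$ yields a $\CC^\times$-equivariant holomorphic map $\widetilde{\varphi\inv}\colon\CC^\times\cdot\varphi(A)\to M_1$; equivariance also gives $\widetilde\varphi(\CC^\times\cdot A) = \CC^\times\cdot\varphi(A)$, so both compositions $\widetilde{\varphi\inv}\circ\widetilde\varphi$ and $\widetilde\varphi\circ\widetilde{\varphi\inv}$ are defined, $\CC^\times$-equivariant, and restrict to the identity on $A$ and $\varphi(A)$ respectively. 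Equivariance then propagates this to the entire domains, identifying $\widetilde\varphi$ as a biholomorphism onto $\CC^\times\cdot\varphi(A)$.
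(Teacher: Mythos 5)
Your proof is correct and fills in exactly what the paper leaves implicit: the paper attributes this to Sjamaar's Proposition~1.4 and says it ``is a consequence of the discussion above'' (namely equation~\eqref{e:R action} together with orbital convexity), which is precisely the mechanism you have spelled out. Defining $\widetilde\varphi(z\cdot x) := z\cdot\varphi(x)$, reducing well-definedness to the real one-parameter subgroup by peeling off the $\SS^1$-factor, and invoking connectedness of $\{r\in\RR\mid e^r\cdot y\in A\}$ to justify applying~\eqref{e:R action} at $r=t$ is the same route Sjamaar takes, and the remaining verifications (local holomorphicity via $y'\mapsto z\cdot\varphi(z\inv\cdot y')$, and obtaining the inverse by running the construction on $\varphi\inv$ and propagating the identity off $A$ and $\varphi(A)$ by equivariance) are carried out correctly.
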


\begin{remark}
The proof of Proposition~\ref{p:loc to glob action} in \cite{reyer}
uses the definition stated here in Definition~\ref{d:orb convx}. 
Note, however, that $A \subset M$ may not be orbitally convex even if
the intersection $\{ e^{t} \cdot x \mid t \in \R \}  \cap A$ is connected for all $x \in A$.
To see this,
let $1 \in \Z$ act on $\C^\times \subset \C$ by $z \mapsto 2 z$,
and let $M := \C^\times/\Z$ with the natural $\C^\times$-action.
Let $B \subset M$ be the image of  $\SS^1 \subset \C^\times$, and let
$A =  M \smallsetminus B$.
Then $\{ e^{t} \cdot x \mid t \in \RR \} \cap A$ is connected for all $x \in M$,
but $A$ is not orbitally convex.
\end{remark}

To complete the proof of Proposition~\ref{p:local normal form}, 
we  show that we can choose the neighbourhoods
in Lemma~\ref{l:bochner} to be orbitally convex.  For this, we need the following lemma.

\begin{lemma}\label{l:vectorspace2}
Let $\CC^\times$ act linearly on $\CC^{n}$,
and let $J$ be the standard complex structure on $\CC^n$.
Let $U$ be an $\SS^1$-invariant neighbourhood of $0$ in $\CC^n$, 
with a symplectic form $\omega \in \Omega^2(U)^{\SS^1}$ tamed by the action, 
and a moment map $\Psi \colon U \to \RR$ sending $0$ to $0$.  There exist $\delta>0$ and an orbitally convex neighbourhood $V \subset U$ of $0$ 
so that the following holds:
Given $z \in V$, if  $(t_-,t_+ ) = \{ t \in \R \mid e^t \cdot z \in V \}$ then
\begin{enumerate} 
\item either $t_+ = \infty$, or there exists $t \in (t_-,t_+)$ with $\Psi(e^t \cdot z) > \delta$; and
\item either $t_- = -\infty$  or there exists $t \in (t_-,t_+)$ with $\Psi(e^t \cdot z) < -\delta$.
\end{enumerate}
\end{lemma}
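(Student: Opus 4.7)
The plan is to build $V$ as an intersection of orbitally convex sets, chosen so that orbit exit from $V$ can only come from the moment map constraint. Let $a_1,\ldots,a_n\in\ZZ$ be the weights of the $\CC^\times$-action on $\CC^n$, and let $E_+,E_0,E_-\subset\CC^n$ denote the subspaces on which $\SS^1$ acts by positive, zero, and negative weights respectively. Fix $r_U>0$ with $\overline{B_{r_U}}\subset U$, and let $w^{\alpha_1},\ldots,w^{\alpha_k}$ be a finite generating set for the ring $\CC[w_1,\ldots,w_n]^{\CC^\times}$ of polynomial $\CC^\times$-invariants (such a set exists by Hilbert's theorem, since the weight-zero multi-indices form a finitely generated submonoid of $\ZZ^n_{\geq 0}$). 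For parameters $\delta,M>0$ to be chosen, set
\[
V_{\delta,M}:=B_{r_U}\cap\Psi^{-1}((-\delta,\delta))\cap\bigcap_{j=1}^{k}\{w\in\CC^n:|w^{\alpha_j}|<M\}.
\]
Each factor meets every $\RR$-orbit in an interval: $\|e^t\cdot w\|^2=\sum_i e^{2 a_i t}|w_i|^2$ is convex in $t$, $\Psi(e^t\cdot w)$ is strictly monotone by Lemma~\ref{l:monotone}, and each $|w^{\alpha_j}|$ is constant along $\RR$-orbits by $\CC^\times$-invariance; hence $V_{\delta,M}$ is orbitally convex.

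Before fixing the parameters I would record the sign statement that $\Psi>0$ on $(E_+\cap B_{r_U})\setminus\{0\}$ and $\Psi<0$ on $(E_-\cap B_{r_U})\setminus\{0\}$. Indeed, given $u\in(E_+\cap B_{r_U})\setminus\{0\}$, the backward half-orbit $\{e^t\cdot u:t\leq 0\}$ lies in $E_+$, has norm bounded by $|u|$, and so stays in $B_{r_U}\subset U$; Lemma~\ref{l:monotone} together with $\Psi(0)=0$ then gives $\Psi(u)>\lim_{t\to-\infty}\Psi(e^t\cdot u)=0$, and the other case is symmetric.

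The key step is the containment
\[
S_{\delta,M}:=\overline{B_{r_U}}\cap\{|\Psi|\leq\delta\}\cap\bigcap_{j=1}^{k}\{|w^{\alpha_j}|\leq M\}\ \subset\ B_{r_U/2}
\]
for $\delta,M$ sufficiently small, which I would prove by contradiction. A sequence $w_n\in S_{1/n,1/n}$ with $\|w_n\|\geq r_U/2$ would, by compactness of $\overline{B_{r_U}}$, subconverge to some $w_\infty\in\overline{B_{r_U}}\subset U$ with $\|w_\infty\|\geq r_U/2$, $\Psi(w_\infty)=0$, and $w_\infty^{\alpha_j}=0$ for every $j$. Vanishing of all the invariant generators forces $w_\infty$ into the nullcone $E_+\cup E_-$ (the common zero locus of the $\CC^\times$-invariants), and the sign statement above then forces $\Psi(w_\infty)\neq 0$, a contradiction.

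Once $S_{\delta,M}\subset B_{r_U/2}$, any orbit through $V_{\delta,M}$ stays in $S_{\delta,M}\subset B_{r_U/2}$ while inside $V_{\delta,M}$, so neither the radial constraint $\|w\|=r_U$ nor the orbit-constant constraints $|w^{\alpha_j}|=M$ can ever be binding at an exit time; the only remaining exit mechanism is $\Psi(e^{t_\pm}\cdot z)=\pm\delta$. Strict monotonicity of $\Psi$ along orbits plus continuity then yield $\Psi(e^t\cdot z)>\delta/2$ for $t$ slightly less than $t_+$ and $\Psi(e^t\cdot z)<-\delta/2$ for $t$ slightly greater than $t_-$, so $\delta':=\delta/2$ works in both (1) and (2). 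The hardest part is the compactness/containment argument for $S_{\delta,M}$, which relies jointly on the sign statement for $\Psi$ on $E_\pm$ and on the GIT-theoretic fact that the $\CC^\times$-invariant monomials cut out exactly the nullcone $E_+\cup E_-$.
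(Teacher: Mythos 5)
Your proof is correct, but it is genuinely different from the paper's. The paper works directly with the weight decomposition $\CC^n \cong \CC^{k_-}\times\CC^{k_+}\times\CC^l$ and introduces the explicit $\SS^1$-invariant ``weighted radii'' $N_\pm(z_\pm) = \big(\sum_j |z_j|^{2/\alpha_j^\pm}\big)^{1/2}$, whose scaling law $N_\pm(e^t\cdot z_\pm) = e^{\pm t}N_\pm(z_\pm)$ immediately implies that $N:=N_-N_+$ is constant along $\RR$-orbits; it then takes $V$ to be a sublevel set of the $N_\pm$ and $N$ (times a compact piece in the trivial directions), and uses compactness of the spheres $S^\pm_\eps$ to force the moment map past $\pm\delta$. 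You instead use the $\CC^\times$-invariant monomials as the $\RR$-orbit invariants, appeal to Gordan/Hilbert for a finite generating set, and replace the paper's explicit boundary analysis with a limit-point/compactness argument showing $S_{\delta,M}\subset B_{r_U/2}$ for small $\delta,M$, so that exit from your $V_{\delta,M}$ can only occur through the moment-map constraint. The two proofs share the essential ingredients -- the sign of $\Psi$ on $E_\pm\smallsetminus\{0\}$ (via Lemma~\ref{l:monotone}), the observation that certain functions are constant along $\RR$-orbits, and a compactness step -- but you route the compactness through the GIT nullcone (the common zero locus of nonconstant invariants is $E_+\cup E_-$) rather than through the explicit $N$. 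Your approach is slightly less elementary (it uses finite generation of the invariant monoid) but more automatic and arguably cleaner in that it avoids the bespoke $N_\pm$.

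Two small points, neither a real gap. First, your sign statement should be stated on $\overline{B_{r_U}}$ rather than $B_{r_U}$: the limit point $w_\infty$ in your compactness argument may satisfy $\|w_\infty\| = r_U$, and indeed the same backward-orbit argument gives $\Psi>0$ on $(E_+\cap\overline{B_{r_U}})\smallsetminus\{0\}$. Second, because your $V_{\delta,M}$ includes the constraint $\Psi^{-1}((-\delta,\delta))$ (which the paper's $V$ does not), $\Psi(e^t\cdot z)<\delta$ for all $t\in(t_-,t_+)$, and you must take the lemma's $\delta$ to be your $\delta/2$; you do note this, but it is worth being explicit that the $\delta$ in the lemma statement and the $\delta$ in your construction are different.
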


\begin{proof}
We may assume $\CC^n = \CC^{k_-} \times \CC^{k_+} \times \CC^l$, where $\CC^\times$ acts on $\CC^{k_-}$ with negative weights $(-\alpha^-_1,\dots,-\alpha^-_{k_-})\in(-\NN)^{k_-}$, that is, $\lambda\cdot(z_1,\dots,z_{k_-})=(\lambda^{-\alpha_1^-}z_1,\dots,\lambda^{-\alpha_{k_-}^-}z_{k_-})$;
on $\CC^{k_+}$ with positive weights $(\alpha^+_1,\dots,\alpha^+_{k_+}) \in \NN^{k_+}$;
and on $\CC^l$ trivially.  

Define continuous $\SS^1$-invariant functions $N_\pm \colon \CC^{k_\pm}\to\RR$  by $$N_\pm(z)=\bigg(\sum_{j=1}^{k_\pm}|z_j|^{{2}/{\alpha^\pm_j}} \bigg)^\frac{1}{2},$$ and define $N \colon \C^{k_-} \times \C^{k_+}\times \CC^l \to \R$ by
$N(z_-,z_+,w) = N_-(z_-) N_+(z_+).$
Note that 
\begin{gather}
\label{e:neighbourhoods}
N_\pm(e^t\cdot z_\pm)=e^{\pm t} N_\pm(z_\pm),  \ \mbox{and so } \\
\label{e:neighbourhoods2}
N(e^t \cdot (z_-,z_+,w)) =  N(z_-,z_+,w)
\end{gather}
for all $z_\pm \in\CC^{k_\pm}$, $w\in\CC^l$, and $t\in \RR$.

Given $\eps>0$, define 
$$D^\pm_\eps=\{ z \in \CC^{k_\pm} \mid N_\pm(z) < \eps \} 
\mbox{  and } 
S^\pm_\eps = \{z \in \CC^{k_\pm} \mid N_\pm(z) = \eps \} 
\subsetneq \overline{D^\pm_\eps}.$$  
If $\eps<1$, then $|z_j|<1$ for all $z \in D^\pm_\eps$ and  $j \in \{1, \dots, k_\pm\}$, 
and so  $N_\pm(z) \geq \big(\sum_j|z_j|^2\big)^\frac{1}{2}=|z|.$ 
Thus there exists $\eps > 0$ and a compact, connected  neighbourhood $K$ of $0 \in \CC^l$ such that $\overline{D^-_\eps} \times \overline{D^+_\eps}\times K\subset U$.  

If $z\in \overline{D^\pm_\eps}$, then $\limit{t\to \mp\infty}e^t\cdot z=0.$  Moreover, $e^t \cdot z \in D^\pm_\eps$ for all $t \in \mp(0,\infty)$ 
by \eqref{e:neighbourhoods}.  
Additionally, $\Psi(0,0,w)=0$ for any $w\in K$; therefore,  
by Lemma~\ref{l:monotone}, $\Psi(z_-,0,w)<0$ for every non-zero $z_-\in \overline{D^-_\eps}$, 
and $\Psi(0,z_+,w)>0$ for every non-zero $z_+\in \overline{D^+_\eps}$.
  Hence, since 
$S^\pm_\eps$ and $K$ are compact, there exist $\delta>0$ and $\eps'\in(0,\eps)$ such that 
\begin{equation}\label{e:neighbourhoods3}
\Psi(S^-_\eps\times D^+_{\eps'}\times K)\subsetneq(-\infty,-\delta) \text{  and  } \Psi(D^-_{\eps'}\times S^+_\eps\times K)\subsetneq(\delta,\infty).
\end{equation}  

Define $V:=\{(z_-,z_+,w)\in D^-_\eps\times D^+_\eps\times K\mid N(z_-,z_+) <\eps\eps'\} \subset U$.  
Then $V$ is an orbitally convex neighbourhood of $0$ by \eqref{e:neighbourhoods} and \eqref{e:neighbourhoods2}.
Fix $(z_-,z_+,w) \in V$.
If $z_+ = 0$, then $N(e^t \cdot (z_-,z_+,w)) = 0$ for all $t \in \R$,
and so  $e^t \cdot (z_-, z_+,w)$ lies in $V$ for all $t \geq 0$.
On the other hand, if $z_+ \neq 0$, then  $t_+ := \ln \frac{\eps}{N_+(z_+)} > 0$
because $N_+(z_+) <\eps$. 
 A straightforward calculation using \eqref{e:neighbourhoods} shows that $e^t\cdot (z_-,z_+,w)\in D^-_\eps\times D^+_\eps\times K$ for all  $t\in[0,t_+)$.  
By \eqref{e:neighbourhoods2}, this implies that $e^t\cdot (z_-,z_+,w)\in V$ for all $t\in[0,t_+)$. 
Since $e^{t_+}\cdot (z_-,z_+,w)\in D^-_{\eps'}\times S^+_\eps\times K$, Claim 1 follows from \eqref{e:neighbourhoods3}. 
The proof of Claim 2 is nearly identical.
\end{proof}

\begin{proof}[Proof of Proposition~\ref{p:local normal form}]
By Lemma~\ref{l:bochner}, there exists an $\SS^1$-equivariant 
biholomorphism $\varphi$ from an $\SS^1$-invariant neighbourhood $U$ of 
$\coprod_j \{0\}$ in $\coprod_j \CC^n$ to a neighbourhood of $\{p_1,\dots,p_k\}$ in $M$, where $\CC^\times$ acts linearly on each $\CC^n$. 
Let $\delta$ be the positive real and
$V \subset U$ be the orbitally convex neighbourhood of $\coprod_j \{0\}$
given by repeatedly applying Lemma~\ref{l:vectorspace2}.
Fix $x \in \varphi(V)$.  Assume that $\varphi(z) = e^{t'} \cdot x$,
where $z \in V$ and $t' \in \R$,
and let $(t_-,t_+) :=\{t \in \RR \mid e^t\cdot z \in V \}$. 
Then, since $V$ is orbitally convex,
 Equation \eqref{e:R action} (alternatively,  Proposition~\ref{p:loc to glob action}) 
implies that $\varphi(e^t \cdot z) = e^{t+t'} \cdot x$ 
for all $t \in (t_-,t_+)$. 
In particular, $e^{t} \cdot x \in \varphi(V)$ for all $t \in (t' +t_-,t' +t_+)$.
Moreover, by Lemma~\ref{l:vectorspace2},
\begin{enumerate} 
\item either $t_+=\infty$, or there exists $t\in(t' + t_-,t' +t_+)$ 
with $\Psi(e^{t}\cdot x) > \delta$; and
\item either $t_-=-\infty$, or there exists $t\in(t' +t_-,t'+ t_+)$ with 
$\Psi(e^{t}\cdot x) < -\delta$.
\end{enumerate}
Finally, by Lemma~\ref{l:monotone}, the function 
$t \mapsto \Psi(e^t  \cdot x)$ is 
increasing.  
Therefore, $\varphi(V)$ is open and orbitally convex. 
Thus, by Proposition~\ref{p:loc to glob action}, $\varphi|_V$ extends to a $\CC^\times$-equivariant biholomorphism from $\CC^\times\cdot V$ to $\CC^\times\cdot \varphi(V)$.
\end{proof}

Since the function $t \mapsto \Psi(e^t \cdot x)$ is increasing,
Claims (1) and (2) above imply that
 $\Psi(e^t \cdot x) \not\in (-\delta, \delta)$ if  $t \not\in(t' + t_-, t'+ t_+)$. Since also $e^t \cdot x \in \varphi(V)$ for all $t \in ( t'+ t_-, t' + t_+)$,
the above proof has the following corollary.

\begin{corollary}\label{delta}
Let $(M,J)$ be a complex manifold with a holomorphic $\CC^\times$-action, 
a symplectic form $\omega\in\Omega^2(M)^{\SS^1}$ tamed by the action, and a moment map $\Psi\colon M\to\RR$. 
Given $p \in  M^{\SS^1} \cap \Psi\inv(0)$ and an $\SS^1$-invariant neighbourhood $U$
of $p$, there exists $\delta > 0$ and
an $\SS^1$-invariant open neighbourhood $V \subseteq U$ of $p$
such that  $\C^\times \cdot V \cap \Psi\inv(-\delta,\delta) \subset V$.
\end{corollary}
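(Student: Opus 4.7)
The plan is to observe that the neighbourhood and constant produced in the proof of Proposition~\ref{p:local normal form} already witness this corollary; the argument is essentially a repackaging of what is shown there, as the remark preceding the corollary already suggests.

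First, I would apply Lemma~\ref{l:bochner} to obtain an $\SS^1$-equivariant biholomorphism $\varphi$ from an $\SS^1$-invariant neighbourhood $U_0$ of $0 \in \CC^n$ (with $\CC^\times$ acting linearly) onto a neighbourhood of $p$; by shrinking $U_0$, I may arrange this image to lie in $U$. The form $\varphi^*\omega$ is then tamed by the linear action, with moment map $\Psi\circ\varphi$ sending $0$ to $0$. Next, I would apply Lemma~\ref{l:vectorspace2} to produce an orbitally convex neighbourhood $V_0\subseteq U_0$ of $0$ together with a constant $\delta>0$, and set $V:=\varphi(V_0)$, an $\SS^1$-invariant open neighbourhood of $p$ contained in $U$. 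I claim this $V$ and $\delta$ are as required.

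To verify the claim, I would take $y\in(\CC^\times\cdot V)\cap\Psi\inv(-\delta,\delta)$ and, using $\SS^1$-invariance of $V$, write $y=e^s\cdot x$ with $x\in V$ and $s\in\RR$. If $x\in M^{\SS^1}$ then $y=x\in V$, so I may assume $x\notin M^{\SS^1}$ and write $x=\varphi(z)$ for the unique $z\in V_0$. Set $(t_-,t_+):=\{t\in\RR\mid e^t\cdot z\in V_0\}$, which is an interval by orbital convexity. The goal is to show $s\in(t_-,t_+)$: once this is established, the $\CC^\times$-equivariant extension of $\varphi$ guaranteed by Proposition~\ref{p:loc to glob action} yields $y=e^s\cdot\varphi(z)=\varphi(e^s\cdot z)\in V$, and we are done.

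The main obstacle, though ultimately brief, will be ruling out $s\geq t_+$ and $s\leq t_-$. Suppose $s\geq t_+$; then $t_+<\infty$, so Lemma~\ref{l:vectorspace2}(1) applied to $z$ furnishes some $t\in(t_-,t_+)$ with $\Psi(e^t\cdot x)=(\Psi\circ\varphi)(e^t\cdot z)>\delta$. Since $s\geq t_+>t$ and $x\notin M^{\SS^1}$, the strict monotonicity of Lemma~\ref{l:monotone} gives $\Psi(y)=\Psi(e^s\cdot x)>\Psi(e^t\cdot x)>\delta$, contradicting $\Psi(y)<\delta$. The case $s\leq t_-$ is completely symmetric, using Lemma~\ref{l:vectorspace2}(2) in place of (1).
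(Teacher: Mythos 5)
Your proof is correct and takes essentially the same approach as the paper, which derives the corollary directly from the facts established in the proof of Proposition~\ref{p:local normal form} (namely, the biholomorphism $\varphi$ from Lemma~\ref{l:bochner} and the orbitally convex $V$ and $\delta$ from Lemma~\ref{l:vectorspace2}, combined with the monotonicity from Lemma~\ref{l:monotone}). Your careful handling of the fixed-point case and the explicit reduction to $s \in (t_-,t_+)$ via strict monotonicity spell out what the paper states more tersely in the paragraph preceding the corollary.
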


\section{Tame Reduction}\label{s:reduced spaces}

It is well-known that the reduced spaces of K\"ahler manifolds naturally
inherit K\"ahler structures \cite{GS82,Ki,HKLR}.
The goal of this section is to generalise this fact to our setting; in particular,  the reduced spaces of complex
manifolds with  tamed symplectic forms  naturally inherit complex structures
that  tame the reduced symplectic form.

\begin{proposition}\label{p:reduction}
Let $(M,J)$ be a complex manifold with a holomorphic $\CC^\times$-action, a symplectic form $\omega\in\Omega^2(M)^{\SS^1}$ tamed by the action, and a moment map $\Psi\colon M\to\RR$. 
Given a regular value $a$  of $\Psi$, 
let $U_a:=\CC^\times\cdot\Psi^{-1}(a)$. 
Then the following hold.
\begin{enumerate}
\item \label{i:complex reduction}
 The quotient $U_a/\CC^\times$ is naturally a complex orbifold,  and $U_a$ is a holomorphic $\CC^\times$-bundle over $U_a/\CC^\times$.
\item \label{i:biholom} There is a complex structure $J_a$ on the reduced space $M\red{a}\SS^1$ 
so that the inclusion $\Psi^{-1}(a) \hookrightarrow U_a$ induces a biholomorphism $M\red{a}\SS^1\to U_a/\CC^\times$.
\item For all $x \in \Psi\inv(a)$, the natural map
$$ \{ X \in T_x M \mid \omega(\xi_M,X) = \omega(\xi_M,J(X)) = 0\}\to T_{[x]} (M \red{a}\SS^1)$$
is an isomorphism of complex and symplectic vector spaces.
Consequently, if $J$ tames $\omega$ at $x$ then $J_a$ tames the reduced symplectic form at $[x]$.
\end{enumerate}
\end{proposition}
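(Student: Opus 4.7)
The plan is to address parts (1)--(3) in order, with the technical work concentrated in (1).

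For (1), I will first verify that $U_a$ is open and that $\CC^\times$ acts locally freely on it. Openness follows because $-J(\xi_M)$ generates the $\RR$-action and satisfies $d\Psi(-J(\xi_M)) = \omega(\xi_M,J(\xi_M)) > 0$ on $M\smallsetminus M^{\SS^1}$ (tameness), so the implicit function theorem produces a smooth function $t(y)$ near each point of $\Psi\inv(a)$ with $\Psi(e^{t(y)}\cdot y) = a$. Local freeness has two pieces: Lemma~\ref{l:monotone} rules out any $\CC^\times$-stabilizer with nonzero real part (since $\Psi$ is $\SS^1$-invariant yet strictly monotone along $\RR$-orbits), and regularity of $a$ forces the residual $\SS^1$-stabilizer to be finite. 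I then build a local holomorphic slice at $x \in U_a$: apply Lemma~\ref{l:bochner} to the finite stabilizer $\Gamma_x \subset \SS^1$ to linearize holomorphically, choose a $\Gamma_x$-invariant complex complement $V$ to $\Span_\CC(\xi_M|_x)$ in $T_xM$, and use the inverse function theorem on $(g,v)\mapsto g\cdot\exp(v)$ to obtain a local biholomorphism $\CC^\times \times_{\Gamma_x} V' \to M$ onto an open set. These slices equip $U_a/\CC^\times$ with the structure of a complex orbifold and exhibit $U_a \to U_a/\CC^\times$ as a holomorphic $\CC^\times$-principal bundle.

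For (2), the inclusion induces a map $\bar\iota \colon \Psi\inv(a)/\SS^1 \to U_a/\CC^\times$. Surjectivity is by definition of $U_a$. Injectivity follows from Lemma~\ref{l:monotone}: if $x_1 = e^{t+i\theta}\cdot x_2$ with $x_1,x_2\in\Psi\inv(a)$, then $\SS^1$-invariance of $\Psi$ gives $\Psi(e^t\cdot x_2)=\Psi(x_2)=a$, forcing $t=0$. The map $\sigma \colon U_a \to \Psi\inv(a)$, $y\mapsto e^{t(y)}\cdot y$ (with $t(y)$ as above), is smooth and $\SS^1$-equivariant, so it descends to a smooth two-sided inverse of $\bar\iota$. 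Defining $J_a$ by transport of structure through $\bar\iota$ makes $\bar\iota$ a biholomorphism tautologically.

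For (3), using $\omega(\xi_M,\cdot) = -d\Psi$ I rewrite $H_x = \ker d\Psi_x \cap J(\ker d\Psi_x)$, the maximal $J$-invariant subspace of $T_x\Psi\inv(a)$; its real dimension is $2n-2$ because $\xi_M \in \ker d\Psi$ while $J(\xi_M)\notin \ker d\Psi$ (again by tameness). The natural map $H_x \hookrightarrow T_x\Psi\inv(a) \twoheadrightarrow T_{[x]}(M\red{a}\SS^1)$ has trivial kernel $H_x \cap \RR\xi_M|_x = 0$ for the same reason, and a dimension count promotes it to a real linear isomorphism. For $\CC$-linearity, I exploit the freedom in (1) to take the slice $V := H_x$: it is $\CC$-linear, $\Gamma_x$-invariant, and transverse to $\Span_\CC(\xi_M|_x)$, so the resulting chart makes the natural map the differential of a biholomorphism, hence $\CC$-linear. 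Symplectic compatibility is immediate from $\pi^*\omega_a = \iota^*\omega$ on $\Psi\inv(a)$, giving $\omega_a([X],[Y]) = \omega(X,Y)$ for $X,Y\in H_x$. Combining, if $J$ tames $\omega$ at $x$ then for nonzero $X\in H_x$, $\omega_a([X],J_a[X]) = \omega(X,J(X)) > 0$.

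The main difficulty I expect is Part (1): establishing the orbifold and bundle structure without a K\"ahler metric to rely on. The key observation making this work is that the only role played by positivity of $\omega(\cdot,J(\cdot))$ in the standard slice-theorem machinery is to ensure $d\Psi(-J(\xi_M)) > 0$, which is precisely the tameness hypothesis encoded in Lemma~\ref{l:monotone}.
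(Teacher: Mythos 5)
Your outline tracks the paper closely on parts (2) and (3), and the identification $H_x = \ker d\Psi_x \cap J(\ker d\Psi_x)$ together with the dimension count for (3) is essentially the paper's argument. The gap is in part (1), and it is exactly the non-trivial step the paper spends a paragraph on: \emph{properness} of the $\CC^\times$-action on $U_a$.

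You establish local freeness and then say ``use the inverse function theorem on $(g,v)\mapsto g\cdot\exp(v)$ to obtain a local biholomorphism $\CC^\times\times_{\Gamma_x}V'\to M$ onto an open set.'' The inverse function theorem only gives a biholomorphism from a neighbourhood of $[1,0]$; extending $\CC^\times$-equivariantly to all of $\CC^\times\times_{\Gamma_x}V'$ yields a local diffeomorphism everywhere, but injectivity of that extension is precisely the slice property, and for a \emph{non-compact} group like $\CC^\times$ it does not come for free --- it is where properness (or orbital convexity) enters. Without it, nothing rules out $g\cdot\exp(v_1)=\exp(v_2)$ for $v_1,v_2\in V'$ and $g$ far from $\Gamma_x$, which would mean $V'/\Gamma_x$ is not an honest chart and, worse, $U_a/\CC^\times$ might fail to be Hausdorff. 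The paper closes this by proving the action on $U_a$ is proper: using the $\SS^1$-invariance identity $f(e^{u+iv}\cdot x)=f(x)-u$ for the function $f$ you call $t(y)$, it shows that if $(e^{u+iv}\cdot x,x)$ lies in a compact set $L$, then $u$ is bounded, hence $\Theta^{-1}(L)$ is compact; it then invokes the slice theorem for proper actions. You already built the function $t(y)$ --- you should use it to prove properness before attempting the slice construction, rather than going straight to the inverse function theorem. As written, part (1) is incomplete.

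One smaller remark: in your part (3) you invoke ``the freedom in (1) to take the slice $V:=H_x$,'' so the $\CC$-linearity claim is only as solid as the slice construction. Once properness is in place, the slice theorem gives a slice modelled on the normal space to the orbit, and since $T_xM=H_x\oplus\Span_\CC(\xi_M|_x)$ as complex $\Gamma_x$-representations, that normal space can indeed be identified with $H_x$; so this step is fine modulo fixing (1).
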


\begin{remark}\label{r:kaehlar}
In the situation of Proposition~\ref{p:reduction}, if $(M,J,\omega)$
is K\"ahler then the reduced space $M \red{a} \SS^1$ is K\"ahler by Claim (3).
Thus, the standard  theorem for K\"ahler manifolds is a special case.
We have written our proofs to show that the
analogous statement holds whenever applicable in this paper:
cutting (Proposition~\ref{p:cutting}), blow-ups (Proposition~\ref{p:blowup2-mfld} and
Proposition~\ref{p:blowup2}),
and adding fixed points  (Proposition~\ref{p:prop}).
Here, we rely on the final
claims in Lemmas~\ref{l:modifyforms1b}, \ref{l:modifyforms2},
and Lemma~\ref{l:modifyforms}.
\end{remark}

\begin{remark}\label{r:toricreduction}
A {\bf symplectic toric orbifold}
is a triple $(M,\omega,\Phi)$, 
where $(M,\omega)$ is a $2n$-dimensional compact, connected
symplectic orbifold and $\Phi \colon M \to \R^n$ is a moment
map for an effective $(\SS^1)^n$-action.
The {\bf moment polytope} $\Delta := \Phi(M)$ is a convex rational simple
polytope.
Given a facet $F \subset \Delta$ with interior $F^\circ$,
there exists a natural number $k_F$ 
so that
$\Z_{k_F}$
is the orbifold isotropy subgroup of 
each point in $\Phi\inv(F^\circ)$.
Symplectic toric orbifolds are
classified (up to $(\SS^1)^n$-equivariant symplectomorphism)
by their moment polytopes (up to translation) and these
natural numbers.
Moreover, the stabiliser of $x \in M$ is the connected subgroup
$H \subset (\SS^1)^n$  with Lie algebra $\mathfrak{h}$, where $\Phi(x) + \mathfrak{h}^0$
is the minimal affine plane that contains  
a face of $\Delta$ containing $\Phi(x)$; in particular, $x \in  M^{(\SS^1)^n}$ exactly if $\Phi(x) \in \Delta$ is a vertex.

Hence, we can visualise  the  constructions in this paper
by considering the case that the circle
$\SS^1 \times \{1\}^{n-1} \subset (\SS^1)^n$  acts on a toric manifold
$(M,\omega,\Phi)$,
and so  the  $\SS^1$-moment map $\Psi$ is the first component of $\Phi$.
Since all symplectic manifolds with Hamiltonian circle actions
are locally isomorphic to toric manifolds,
this gives valuable insight into the underlying symplectic geometry in the general case.

For example, 
in the situation described above,  $a \in \R$ is a regular value of $\Psi$ exactly if no vertex of $\Delta$ lies in $\{a\} \times \R^{n-1}$.
In this case, the reduced space 
$M\red{a}\SS^1$
is a symplectic toric orbifold with
moment polytope  $\Delta \cap \big( \{a\} \times \R^{n-1}\big).$
See Remarks~\ref{r:toriccutting}, \ref{r:toricblowup}, \ref{r:toricfixed}, and
\ref{r:toricbirational} for further discussion.
\end{remark}

To prove Proposition~\ref{p:reduction}, we will need the following important technical lemma,
which only depends on Lemma~\ref{l:monotone} and the fact that $\Psi$
is equivariant.

\begin{lemma}\label{l:monotone2}
Let $(M,J)$ be a complex manifold with a holomorphic $\CC^\times$-action, 
a symplectic form $\omega\in\Omega^2(M)^{\SS^1}$ tamed by the action, 
and a  moment map $\Psi\colon M\to\RR$. 
Define 
$$\cU:=\{(x,s) \in \big(M\smallsetminus M^{\SS^1}\big) \times \R \mid
s \in \Psi(\CC^\times\cdot x)\}.$$  
Then for all $(x, s) \in \cU$, there exists a unique $f(x,s) \in \RR$ such that $$\Psi(e^{f(x,s)}\cdot x)=s;$$
moreover, $\cU$ is open  and $f \colon \cU \to \RR$ is a smooth $\SS^1$-invariant function.
\end{lemma}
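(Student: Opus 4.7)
The plan is to split the lemma into its four assertions (existence, uniqueness, openness/smoothness, and $\SS^1$-invariance) and attack them in order, treating the implicit function theorem as the main engine.

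For uniqueness, I would simply invoke Lemma~\ref{l:monotone}: since $x\in M\smallsetminus M^{\SS^1}$, the map $t\mapsto\Psi(e^t\cdot x)$ is strictly increasing, so at most one $t$ satisfies $\Psi(e^t\cdot x)=s$. For existence, the hypothesis $(x,s)\in\cU$ gives some $\lambda\in\CC^\times$ with $\Psi(\lambda\cdot x)=s$; writing $\lambda = e^{i\theta}e^t$ and using the $\SS^1$-invariance of the moment map, one has $\Psi(\lambda\cdot x)=\Psi(e^t\cdot x)$, so $f(x,s):=t$ does the job. The $\SS^1$-invariance of $f$ then follows from the commutativity of $\CC^\times$ together with the $\SS^1$-invariance of $\Psi$: for any $\theta\in\RR$,
$$\Psi\bigl(e^{f(x,s)}\cdot(e^{i\theta}\cdot x)\bigr)=\Psi\bigl(e^{i\theta}\cdot e^{f(x,s)}\cdot x\bigr)=\Psi\bigl(e^{f(x,s)}\cdot x\bigr)=s,$$
so by uniqueness $f(e^{i\theta}\cdot x,s)=f(x,s)$.

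The content of the lemma is really the openness of $\cU$ and the smoothness of $f$, which I would obtain simultaneously via the implicit function theorem. Define the smooth map
$$G\colon(M\smallsetminus M^{\SS^1})\times\RR\longrightarrow\RR,\qquad G(x,t):=\Psi(e^t\cdot x).$$
Repeating the computation of Lemma~\ref{l:monotone}, one has
$$\frac{\partial G}{\partial t}(x,t)=\omega(\xi_M,J(\xi_M))\Big|_{e^t\cdot x}>0$$
everywhere on the domain, since $e^t\cdot x\in M\smallsetminus M^{\SS^1}$. Fix $(x_0,s_0)\in\cU$ and set $t_0:=f(x_0,s_0)$, so $G(x_0,t_0)=s_0$. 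By the implicit function theorem applied to $G(x,t)-s=0$ in the variable $t$, there is an open neighbourhood $V$ of $(x_0,s_0)$ in $(M\smallsetminus M^{\SS^1})\times\RR$ and a smooth function $\tilde f\colon V\to\RR$ with $\tilde f(x_0,s_0)=t_0$ and $G(x,\tilde f(x,s))=s$ for every $(x,s)\in V$. In particular every such $(x,s)$ lies in $\cU$, so $V\subseteq\cU$, proving openness; and by the uniqueness already established, $\tilde f=f|_V$, so $f$ is smooth.

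The only step requiring any care is confirming that the hypotheses of the implicit function theorem actually apply, i.e., that the relevant partial derivative is nowhere zero — but this is exactly the taming hypothesis $\omega(\xi_M,J(\xi_M))>0$ off the fixed set, so there is no genuine obstacle; the remaining assertions are formal.
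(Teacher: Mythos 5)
Your proof is correct and follows essentially the same route as the paper: uniqueness from Lemma~\ref{l:monotone}, existence from the $\SS^1$-invariance of $\Psi$, and openness plus smoothness from the implicit function theorem applied to $(x,t)\mapsto\Psi(e^t\cdot x)$ with the taming condition providing the nonvanishing $t$-derivative. You simply spell out the existence and $\SS^1$-invariance steps in more detail than the paper does.
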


\begin{proof}
Given $(x,s) \in \cU$, let $\gamma_x(t):=\Psi(e^t\cdot x)$ for all $t \in \R$.
The moment map
is $\SS^1$-invariant; hence, by definition, there exists $f(x,s) \in \RR$ such that
$\gamma_x(f(x,s)) = s$.  Moreover, by Lemma~\ref{l:monotone}, $\gamma_x$ is strictly increasing, and so $f(x,s)$ is unique. 
 By the implicit function theorem, there exist open neighbourhoods $V$ 
of $(x,s) \in \cU$ and $W$ of $f(x,s) \in \RR$, and a smooth function 
$\widetilde{f}\colon V\to W$ such that $$\big\{\big((y,u),\widetilde{f}(y,u)\big)~\big|~ (y,u)\in V\big\}=\{(y,u,t)\in V\times W\mid \Psi(e^t\cdot y)=u\}.$$  
Therefore $\cU$ is open and $f$ is smooth.  
Finally, since $\Psi$ is $\SS^1$-invariant, the function $f$ is as well. 
 (Here, $\SS^1$ acts  trivially on $\RR$.)
\end{proof}

\begin{proof}[Proof of Proposition~\ref{p:reduction}]
By Lemma~\ref{l:monotone2},  the $\CC^\times$-invariant set $$U_a := \CC^\times \cdot \Psi\inv(a)= 
\{ x \in M \smallsetminus M^{\SS^1} \mid a \in \Psi(\CC^\times \cdot x) \}$$ is open and there exists a smooth function 
$f\colon U_a\to\RR$ such that $$\Psi(e^{f(x)}\cdot x)=a$$
for all $x \in U_a$.

Define $\Theta \colon \CC^\times \times M \to M \times M$ by
$\Theta( e^{u +iv}, x) = (e^{u + iv} \cdot x, x)$ for all $u,v \in \R$ and $x \in M$.
  Given  a compact set $L \subseteq M \times M$, there exists a closed interval $[s,t] \subset \RR$ so that $f(x_1) \in [s,t]$ and $f(x_2)\in [s,t]$ for all $(x_1,x_2) \in L$.
Since $\Psi$ is $\SS^1$-invariant,  $$f( e^{u+iv} \cdot x) = f(x) - u $$ for all $u,v\in\RR$ and $x\in U_a$.
Thus,  for all $(e^{u+iv},x) \in \Theta^{-1}(L)$, we have $f(x) - u \in [s,t]$ and $f(x) \in [s,t]$,  and so $u \in [s-t, t-s]$.
Thus $\Theta^{-1}(L)$ is contained in a compact set.  Since $\Theta$ is continuous, this implies that $\Theta^{-1}(L)$ is compact, that is,
the $\CC^\times$-action on $U_a$ is proper.  

Since the action is proper, the stabiliser $\Gamma$ of $x$ is finite for all $x \in U_a$.
Moreover, by the slice theorem there is a $\CC^\times$-invariant neighbourhood  
of the orbit $\CC^\times\cdot x$ that is $\CC^\times$-equivariantly biholomorphic to the associated bundle 
$\CC^\times\times_{\Gamma}D$,
where $D$ is a disc in the normal space to  $\C^\times \cdot x$ at $x$.
The quotient map $D\to D/\Gamma$ is an orbifold chart on $U_a/\C^\times$ near $[x]$;
hence, $U_a/\C^\times$ is a complex orbifold and
the quotient map $U_a \to U_a/\C^\times$ is holomorphic.
Since the action of $\Gamma$ on $D$ lifts to
the the diagonal $\Gamma$-action on the product $\CC^\times\times D$,
this implies that $U_a\to U_a/\CC^\times$ is a holomorphic $\CC^\times$-bundle; \emph{cf}. \cite[Corollaries B.31 and B.32]{GGK}.

The natural inclusion map $\Psi^{-1}(a) \hookrightarrow U_a$ descends to a 
well-defined smooth map $i\colon M\red{a}\SS^1 \to U_a/\CC^\times $.
 Similarly, the map $U_a\to\Psi^{-1}(a)$ defined by $x \mapsto e^{f(x)}\cdot x$  descends to a smooth map $g\colon U_a/\CC^\times  \to M\red{a}\SS^1$.  Moreover, these induced maps are inverses of each other.  Under the resulting identification, the symplectic quotient  $M\red{a}\SS^1$ inherits a natural complex structure from $U_a/\CC^\times $.

Fix $x\in\Psi^{-1}(a)$.  By the preceding paragraph, the natural map 
$T_{[x]}(M\red{a}\SS^1)\to T_{[x]}(U_a/\CC^\times)$ is an isomorphism of complex vector spaces. 
 Since $\omega(\xi_M,J(\xi_M))>0$ on $\Psi^{-1}(a)$, 
we can represent every vector  in $T_{[x]}(M\red{a}\SS^1)$ by a unique vector 
in the $J$-invariant subspace $\{X\in T_xM \mid \omega(\xi_M,X)=\omega(\xi_M,J(X))=0\}$ of $T_x M$.
The third claim follows.
\end{proof}

\section{Tame Cutting}\label{s:cutting}

Next, we show that symplectic cutting, developed by Lerman in \cite{Le},
also  works in our setting; see also \cite{BGL} for a discussion of the K\"ahler case.

\begin{proposition}\label{p:cutting}
Let $(M,J)$ be a complex manifold with a holomorphic $\CC^\times$-action, a symplectic form $\omega\in\Omega^2(M)^{\SS^1}$ 
tamed by the action, 
and a  moment map $\Psi\colon M\to\RR$. 
Assume that $0\in\RR$ is a regular value of $\Psi$. 
Then there exists a complex orbifold $(M_{\cut},J_{\cut})$ with a holomorphic $\CC^\times$-action, a symplectic form $\omega_{\cut}\in\Omega^2(M_{\cut})^{\SS^1}$ 
tamed by the action, and a moment map $\Psi_{\cut}\colon M_{\cut}\to\RR$ so that the following hold.
\begin{enumerate}
\item \label{i:image} $\Psi_{\cut}(M_{\cut}) \subseteq (-\infty,0]$.
\item \label{i:zero set} 
A neighbourhood of the fixed component $\Psi_{\text{cut}}^{-1}(0)$
is $\CC^\times$-equivariantly biholomorphic 
to the holomorphic line bundle 
$U_0\times_{\CC^\times}\CC$,
where $U_0 := \CC^\times \cdot \Psi\inv(0)$,  $\CC^\times$
acts diagonally on $U_0 \times \CC$, and $\CC^\times$ acts on $U_0 \times_{\CC^\times} \CC$ by
$\lambda \cdot [x,z] = [\lambda \cdot x,z]$.
\item \label{i:complement} There exists an $\SS^1$-equivariant symplectomorphism from $\Psi^{-1}(-\infty,0)$ to $\Psi_{\cut}^{-1}(-\infty,0)$ 
that induces a biholomorphism between 
the reduced spaces at all regular $s \in (-\infty,0)$.
\item \label{i:cut taming}  If $J$ tames $\omega$ near $\Psi^{-1}(s)$ for some $s \in \R$,
 then $J_{\cut}$ tames $\omega_{\cut}$ near $\Psi_{\cut}^{-1}(s)$.
\item If $\Psi$ is proper, then $\Psi_{\cut}$ is proper.
\end{enumerate}
\end{proposition}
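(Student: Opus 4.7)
The plan is to adapt Lerman's symplectic cutting construction to the tame setting, using Proposition~\ref{p:reduction} as the main technical input. The idea is to form the product $\widetilde{M} := M \times \CC$, equip it with the product complex structure $\widetilde{J} := J \oplus J_0$ and the product symplectic form $\widetilde{\omega} := \omega \oplus \omega_0$ (where $\omega_0 := \tfrac{i}{2}\,dz \wedge d\bar z$), reduce by a \emph{cutting} $\CC^\times$-action $\lambda \cdot (x,z) := (\lambda \cdot x, \lambda z)$ whose moment map is $\widetilde{\Psi}(x,z) := \Psi(x) + \tfrac{1}{2}|z|^2$, and identify the resulting reduced space as $M_{\cut}$. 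The required $\CC^\times$-action on $M_{\cut}$ is induced by a second, commuting \emph{residual} $\CC^\times$-action $\mu \cdot (x,z) := (\mu \cdot x, z)$ on $\widetilde{M}$, and the moment map $\Psi_{\cut}$ is induced by $\Psi$.

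The first step is to verify the setup: a short computation shows
$$\widetilde{\omega}(\widetilde{\xi}, \widetilde{J}\widetilde{\xi}) = \omega(\xi_M, J\xi_M) + \omega_0(\xi_\CC, J_0 \xi_\CC),$$
where $\widetilde{\xi}$ is the cutting vector field; both summands are non-negative and vanish simultaneously only at $(x, 0)$ with $x \in M^{\SS^1}$, so the cutting action tames $\widetilde{\omega}$. Since $0$ is a regular value of $\Psi$, one checks that $0$ is a regular value of $\widetilde{\Psi}$. Apply Proposition~\ref{p:reduction} at level $0$ of the cutting action and set $(M_{\cut}, J_{\cut}, \omega_{\cut}) := \widetilde{M}\red{0}\SS^1$. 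Because the residual action commutes with the cutting action and preserves $\widetilde{\omega}$, it descends to a holomorphic $\CC^\times$-action on $M_{\cut}$; its moment map satisfies $\Psi_{\cut}([x,z]) = \Psi(x) = -\tfrac{1}{2}|z|^2 \leq 0$, which is~(1).

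The main technical obstacle I expect is verifying that the residual action tames $\omega_{\cut}$ (and the closely related claim~(4)). The plan is to use Proposition~\ref{p:reduction}(3) to identify $T_{[x,z]}M_{\cut}$ with the horizontal subspace $H_{(x,z)} \subset T_{(x,z)}\widetilde{M}$ of vectors $\widetilde{\omega}$-orthogonal to both $\widetilde{\xi}$ and $\widetilde{J}\widetilde{\xi}$, on which $J_{\cut}$ and $\omega_{\cut}$ are simply the restrictions of $\widetilde{J}$ and $\widetilde{\omega}$. Write the horizontal lift of the residual vector field $\eta_{M_{\cut}}$ as $V := (\xi_M, 0) + a\widetilde{\xi} + b\widetilde{J}\widetilde{\xi}$, impose the two $H$-conditions to solve for $a, b \in \RR$ (yielding $b = 0$ and $a = -A/(A+B)$), and compute
$$\omega_{\cut}\bigl(\eta_{M_{\cut}}, J_{\cut}\eta_{M_{\cut}}\bigr)\Big|_{[x,z]} = \frac{A\,B}{A+B}, \qquad A := \omega(\xi_M, J\xi_M)|_x,\ B := \omega_0(\xi_\CC, J_0 \xi_\CC)|_z.$$
This is strictly positive precisely on the complement of $M_{\cut}^{\SS^1}$, once one identifies the two residual fixed components, namely $\Psi_{\cut}^{-1}(0)$ and the image of $M^{\SS^1} \cap \Psi^{-1}(-\infty, 0)$. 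For~(4): if $J$ tames $\omega$ near $\Psi^{-1}(s)$, then $\widetilde{J}$ tames $\widetilde{\omega}$ on the corresponding neighbourhood of $\widetilde{M}$, and Proposition~\ref{p:reduction}(3) transfers the taming to $J_{\cut}$ near $\Psi_{\cut}^{-1}(s)$.

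The remaining claims require only routine unpacking. For~(2), the cutting-invariant open set $U_0 \times \CC \subset \widetilde{M}$ meets $\widetilde{\Psi}^{-1}(0)$ in a neighbourhood of the zero section, and by Proposition~\ref{p:reduction}(1) its cutting quotient is the associated line bundle $U_0 \times_{\CC^\times} \CC$ over $U_0/\CC^\times \cong \Psi_{\cut}^{-1}(0)$, with the stated residual action. For~(3), the map $x \mapsto [x, \sqrt{-2\Psi(x)}]$ (well defined modulo the cutting circle) is an $\SS^1$-equivariant symplectomorphism $\Psi^{-1}(-\infty, 0) \to \Psi_{\cut}^{-1}(-\infty, 0)$ that induces biholomorphisms on regular reductions. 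For~(5), if $\Psi$ is proper then $\widetilde{\Psi}^{-1}(0) \cap \Psi^{-1}([a, 0])$ is closed in the compact set $\Psi^{-1}([a, 0]) \times \overline{B(0, \sqrt{-2a})}$ and hence compact, so its cutting-circle quotient $\Psi_{\cut}^{-1}([a, 0])$ is also compact.
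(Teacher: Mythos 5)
Your proposal follows essentially the same route as the paper's proof: product with $\CC$, diagonal ``cutting'' action with moment map $\Psi(x)+|z|^2/2$, reduction at $0$ via Proposition~\ref{p:reduction}, residual action descending to $M_{\cut}$, and the horizontal lift $V = \xi_M - \frac{A}{A+B}\widetilde\xi$ giving $\omega_{\cut}(\eta_{M_{\cut}},J_{\cut}\eta_{M_{\cut}}) = \frac{AB}{A+B}$ (which, with $B=|z|^2$, is exactly the paper's formula). The map $x\mapsto[x,\sqrt{-2\Psi(x)}]$ for Claim~(3) and the argument for~(5) also match.

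The one step you dismiss as ``routine unpacking'' that actually needs an argument is, in Claim~(2), the inclusion $U_0\times\CC\subseteq U'_0 := \CC^\times\cdot\widetilde\Psi\inv(0)$: without it, the quotient of $U_0\times\CC$ by the cutting action is not automatically an \emph{open} subset of $M_{\cut}$, so you cannot yet conclude that a neighbourhood of $\Psi_{\cut}\inv(0)$ is the full line bundle $U_0\times_{\CC^\times}\CC$. The paper verifies this inclusion using Lemma~\ref{l:monotone}: for $(x,z)\in U_0\times\CC$, since $x\notin M^{\SS^1}$ and $0$ is a regular value, $s\mapsto\Psi(e^s\cdot x)$ is strictly increasing and crosses $0$, so $\widetilde\Psi(e^s\cdot(x,z))=\Psi(e^s\cdot x)+e^{2s}|z|^2/2$ is negative for $s\ll 0$ and positive for $s\gg 0$; by continuity $(x,z)\in U'_0$. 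With that check added, your argument is complete.
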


\begin{proof}
Let $J'$ be the product complex structure on $M'=M\times\CC$. 
The diagonal $\CC^\times$-action on $M'$ is holomorphic, 
where $\CC^\times$ acts on $\CC$ by multiplication.
Consider the symplectic form $\omega'=\omega+i dz\wedge d\overline{z}/2
\in\Omega^2(M')^{\SS^1}$. 
If $(x,z)\in M' \smallsetminus (M')^{\SS^1}$, then either $x\not\in M^{\SS^1}$  or $z\neq 0$. 
In either case,  the fact that $\xi_{M'} = \xi_M + \xi_\C$ implies that
$$\omega'(\xi_{M'},J'(\xi_{M'}))\big|_{(x,z)}=\omega(\xi_M,J(\xi_M))\big|_x+|z|^2>0,$$  
and so $\omega'$ is tamed by the action.
The function $\Psi'\colon M'\to\RR$ sending $(x,z)$ to $\Psi(x)+|z|^2/2$ is a moment map.
Since $0$ is a regular value of $\Psi$, it is also a regular value of $\Psi'$.

Define $U'_0:=\CC^\times\cdot(\Psi')^{-1}(0) \subseteq  M'$.
By Proposition \ref{p:reduction}, 
the quotient $U'_0/\CC^\times$ is naturally a complex orbifold,
and $U'_0$ is a holomorphic $\CC^\times$-bundle over $U'_0/\CC^\times$.
Moreover, the reduced space  
\begin{equation}\label{Mcut}
M_{\cut}:=M'\red{0}\SS^1  = \big\{ (x,z) \in M \times \CC \, \big|\, \Psi(x) + |z|^2/2 = 0\big\}/\SS^1
\end{equation}
inherits a symplectic structure $\omega_{\cut}$, and a
complex structure $J_{\cut}$ so that the inclusion $(\Psi')^{-1}(0) \hookrightarrow U_0'$ induces a biholomorphism $M_{\cut}\to U'_0/\CC^\times$. 
Finally, for all $(x,z) \in (\Psi')^{-1}(0)$, the natural map
\begin{equation}\label{cutiso}
\{ X' \in T_{(x,z)} M' \mid \omega'(\xi_{M'},X') = \omega'(\xi_{M'},J(X')) = 0 \}
\to T_{[x,z]}( M_{\cut})
\end{equation}
is an isomorphism of complex and symplectic vector spaces.

Since the holomorphic $\CC^\times$-action on $M'$ given by $\lambda\cdot(x,z)=( \lambda \cdot x,z)$
commutes with the diagonal action, 
it descends to a holomorphic $\CC^\times$-action on $M_{\cut}$. 
Moreover,  $\omega'$ is invariant under the associated $\SS^1$-action
on $M'$, and so
$\omega_{\cut} \in \Omega^2(M_{\cut})^{\SS^1}$. 
The function $\Psi_{\cut}\colon M_{\cut}\to\RR$ defined by
\begin{equation}\label{Psicut}
\Psi_{\cut}([x,z]) = \Psi(x)
\end{equation}
is a moment map for the $\SS^1$-action on $M_{\cut}$.
The vector field $\Xi$ on $M' \smallsetminus (M')^{\SS^1}$ given by
\begin{equation}\label{e:X}
\Xi:=\xi_M - \frac{\omega(\xi_M,J(\xi_M))}{\omega(\xi_M,J(\xi_M))+|z|^2}\, \xi_{M'}
=\frac{|z|^2\xi_M-\omega(\xi_M,J(\xi_M))\xi_{\CC}}{\omega(\xi_M,J(\xi_M))+|z|^2}
\end{equation}
descends to $\xi_{M_{\cut}}$ on $M_{\cut}$ and satisfies
$\omega'(\xi_{M'},\Xi)=\omega'(\xi_{M'},J'(\Xi))=0$.
Since \eqref{cutiso} is an isomorphism of symplectic and complex vector spaces, this implies that
$$\omega_{\cut}(\xi_{M_{\cut}},J_{\cut}(\xi_{M_{\cut}}))
=\omega'(\Xi,J'(\Xi))=\frac{|z|^2\omega(\xi_M,J(\xi_M))}{\omega(\xi_M,J(\xi_M))+|z|^2}.$$  
In particular, this is positive 
if $[x,z] \in M_{\cut} \smallsetminus M_{\cut}^{\SS^1}$,
because then $x \not\in M^{\SS^1}$ and $z \neq 0$;
hence $\omega_{\cut}$ is tamed by the action.  

Claims (1) and (5) are immediate consequences of \eqref{Mcut} and \eqref{Psicut}.

Fix $(x,z)$ in $U_0 \times \C$.
By definition, there exists $t \in \R$ such that $e^t \cdot x \in \Psi\inv(0)$.
Thus, since $0$ is a regular value, Lemma~\ref{l:monotone} implies that the
function $s \mapsto \Psi(e^s \cdot x)$ is strictly increasing.
Therefore,
$$\lim_{s \to - \infty} \Psi' (e^s \cdot (x,z)) = \lim_{s \to -\infty} \Psi(e^s \cdot x) < 0 \quad \mbox{and} \quad \lim_{s \to \infty} \Psi'(e^s \cdot (x,z))
\geq \lim_{s \to \infty} \Psi(e^s \cdot x)  > 0.$$
By continuity, this implies that $(x,z) \in U'_0$; therefore,
$U_0 \times \C \subset U'_0$.
Moreover, by \eqref{Mcut} and \eqref{Psicut},
\begin{equation*}\label{e:cut level set}
\Psi_{\cut}^{-1}(0) =  \{ (x,0) \in M \times \CC \mid \Psi(x) = 0\}/\SS^1. 
\end{equation*}
Hence, the $\CC^\times$-equivariant biholomorphism $M_{\cut} \to U'_0/\CC^\times$ maps
$\Psi_{\cut}\inv(0)$  into 
$U_0 \times_{\CC^\times} \CC \subset U'_0/\CC^\times.$
Since $U_0 \times_{\CC^\times} \CC$ is open by Lemma~\ref{l:monotone2},
this proves Claim (2).

It is straightforward to check that 
the  map from $\Psi\inv(-\infty,0)$ to $\Psi_{\cut}\inv(-\infty,0)$ that
sends $x$ to $\big[x, \sqrt{ - 2 \Psi(x)}\,\big]$ for all $x \in \Psi\inv(-\infty,0)$
is an $\SS^1$-equivariant symplectomorphism that
intertwines the moment maps.
Given a regular $s < 0$, it  restricts to an $\SS^1$-equivariant  diffeomorphism from 
$\Psi\inv(s)$ to $\Psi_{\cut}\inv(s)$, and so induces a diffeomorphism from
$M \red{s} \SS^1$ to $M_{\cut} \red{s} \SS^1$.
Under the identification $TM' \cong TM \times T\CC$, 
it  sends $X \in T_x (\Psi\inv(s))$ 
to $(X,0) \in T_{[x, \sqrt{-2s}]}  (\Psi_{\cut}\inv(s)) .$
Let $\Xi$ be defined by \eqref{e:X};  
since \eqref{cutiso} is an isomorphism, 
Proposition~\ref{p:reduction} implies that
the natural map from
\begin{equation}\label{e:ugly}
\{X'\in T_{(x,\sqrt{-2s})}M'\mid \omega'(\xi_{M'},X')=\omega'(\xi_{M'},J'(X'))=\omega'(\Xi,X')=\omega'(\Xi,J'(X'))=0\}
\end{equation}
to $T_{[x,\sqrt{-2s}]}(M_{\cut}\red{s}\SS^1)$ is an isomorphism of complex and symplectic vector spaces.
Under the identification $TM'\cong TM\times T\CC$, the vector space in \eqref{e:ugly} can
be rewritten as 
$$\{(X,0)\in T_{(x,\sqrt{-2s})} M' \mid \omega(\xi_{M},X)=\omega(\xi_{M},J(X))=0\}.$$ 
Therefore, Claim (3) follows from part (3) of Proposition~\ref{p:reduction}.

Finally,  fix $s \in \R$ and assume that $J$ tames $\omega$ near $\Psi^{-1}(s)$.  
If $[x,z] \in \Psi_{\cut}\inv(s)$, then $x \in \Psi\inv(a)$ by \eqref{Psicut}.
Hence,  $J$ tames $\omega$ near $x$, and
so $J'$ tames $\omega'$ near $(x,z)$. 
Thus $J_{\cut}$ tames $\omega_{\cut}$ near $[x,z]$ by  \eqref{cutiso}.
This proves Claim (4).
\end{proof}

\begin{remark}\label{r:toriccutting}
Let the circle 
$\SS^1 \times \{1\}^{n-1} \subset (\SS^1)^n$ 
act on a symplectic toric manifold
$(M,\omega,\Phi)$  with moment polytope $\Delta$,
as described
in Remark~\ref{r:toricreduction}, satisfying the assumptions of Proposition~\ref{p:cutting}.
In this case, the cut space $M_{\cut}$ a symplectic toric orbifold with
moment polytope 
$$\Delta_{\cut} := \Delta \cap \{ x \in \R^n \mid x_1 \leq 0\};$$
moreover,  the fixed component $\Psi^{-1}_{\cut}(0)$  maps to the new facet  $\Delta \cap \big(\{0\} \times \R^{n-1}\big).$
\end{remark}

\begin{remark}\label{r:reverse action}
Let $(M,J)$ be a complex manifold with a holomorphic $\CC^\times$-action,
a symplectic form $\omega \in \Omega^2(M)^{\SS^1}$ tamed by the action, and a moment map $\Psi \colon M \to \R$. 
Then the {\bf reversed} $\CC^\times$-action on $M$, given by
$(\lambda, x)  \mapsto 
\lambda\inv \cdot x$,
is also holomorphic.
Since $-\xi_M$ is the induced vector field for the associated $\SS^1 \subset \CC^\times$-action,
the symplectic
form $\omega \in \Omega^2(M)^{\SS^1}$ is tamed by the reversed action, and
$\Psi' := - \Psi$ is a moment map for it.
The reduced space $(\Psi')\inv(s)/\SS^1$ is naturally biholomorphically symplectomorphic to
the reduced space $\Psi\inv(-s)/\SS^1$  for all $s \in \R$.
However,  
the Euler classes of the  
bundles $\CC^\times \cdot (\Psi')\inv(s) \to (\Psi')\inv(s)/\SS^1$ 
and  $\CC^\times \cdot \Psi\inv(-s) \to \Psi\inv(-s)/\SS^1$ are additive inverses, as are the weights of the original and reversed action at each fixed point.

Thus, by reversing the action, applying Proposition~\ref{p:cutting},
and reversing the action again,  we see that
Proposition~\ref{p:cutting} still holds with
the following modifications: Replace $(-\infty,0]$ by $[0,\infty)$ in Claim (1);
replace the diagonal action by the antidiagonal action in Claim (2);
and  replace $(-\infty,0)$ by $(0,\infty)$ in Claim (3).
See Remarks~\ref{r:reverse 2} and~\ref{r:reverse 3} for further applications.
\end{remark}

\begin{remark}\label{r:cutting}
Cutting can be used to compactify manifolds.
Let $(M,J)$ be a complex manifold with a holomorphic $\CC^\times$-action,
a symplectic form $\omega \in \Omega^2(M)^{\SS^1}$ tamed by the action, and a 
proper moment map $\Psi \colon M \to \R$. 
If  $a < b$ are regular values, then
by applying Proposition~\ref{p:cutting} twice 
(once modified as in Remark~\ref{r:reverse action}), we get a compact complex manifold $(M',J')$ 
with a holomorphic $\C^\times$-action, a symplectic form $\omega'\in\Omega^2(M')^{\SS^1}$ tamed by the action, and a moment map $\Psi' \colon M'\to\RR$, so that
$\Psi'(M') \subseteq [a,b]$ and  the appropriate analogues of Claims (2)-(4) hold.
\end{remark}

\section{Tame Blow-Ups}\label{s:blowup}


It is well-known that the blow-ups of K\"ahler manifolds admit K\"ahler forms.
In this section, we generalise blow-ups to our setting.
In particular, we first show that the blow-up at one point of a complex manifold 
with
a tamed symplectic form  admits a tamed symplectic form, and then extend this claim to
the blow-up of an isolated $\ZZ_2$-singularity in a complex orbifold.

\begin{proposition}\label{p:blowup2-mfld}
Let $(M,J)$ be a complex manifold with a holomorphic $\CC^\times$-action, a symplectic form $\omega\in\Omega^2(M)^{\SS^1}$ tamed by the action (everywhere) and tamed by $J$ on $W \subseteq M$, and a moment map $\Psi\colon M\to\RR$.  
Let
 $(\widehat{M},\widehat{J})$ be the complex blow-up of $M$ at $p\in M^{\SS^1}\cap W$.
For sufficiently small $t>0$, there exist a symplectic form $\widehat{\omega}\in\Omega^2(\widehat{M})^{\SS^1}$ tamed by the action (everywhere) and tamed by $\widehat{J}$ on $q^{-1}(W)$, 
and a moment map $\widehat{\Psi}\colon \widehat{M}\to\RR$ such that $$[\widehat{\omega}]=q^*[\omega]-t\mathcal{E},$$ 
where $q \colon \widehat{M}\to M$ is the blow-down map  
and $\mathcal{E}$ is the Poincar\'e dual of the exceptional divisor $q^{-1}(p)$.
 Moreover, given a neighbourhood $V$ of $p$, we may assume that $\widehat{\omega}=q^*\omega$ and $\widehat{\Psi}=q^*\Psi$ on $\widehat{M} \smallsetminus q\inv(V)$.
\end{proposition}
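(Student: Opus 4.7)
My plan is to reduce the problem to the standard K\"ahler blow-up of a linear action on $\CC^n$ via the holomorphic local normal form, and then patch with the untouched $\omega$ away from $p$, in the spirit of the K\"ahler proof.

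First, by Proposition~\ref{p:local normal form} applied to $\{p\}$, there exist a $\CC^\times$-invariant neighbourhood $U$ of $p$ in $M$ and a $\CC^\times$-equivariant biholomorphism $\varphi\colon U_0\to U$ sending $0$ to $p$, where $U_0$ is an $\SS^1$-invariant neighbourhood of $0\in\CC^n$ carrying a linear $\CC^\times$-action. Shrink so that $\overline{U}\subseteq V\cap W$. Write $\omega_1:=\varphi^*\omega$ and $\Psi_1:=\Psi\circ\varphi-\Psi(p)$; these are an $\SS^1$-invariant symplectic form and moment map on $U_0$ that are tamed by the action and also by the standard complex structure $J_0$ (since $U\subseteq W$). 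On $\CC^n$, let $\omega_{\text{lin}}:=\tfrac{i}{2}\sum_j dz_j\wedge d\bar z_j$ and let $\Psi_{\text{lin}}$ be the corresponding $\SS^1$-moment map with $\Psi_{\text{lin}}(0)=0$.

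Second, I would apply an $\SS^1$-equivariant Moser-type argument---the content of the modification lemmas flagged in Remark~\ref{r:kaehlar}---to replace $\omega$ on $U$ by an $\SS^1$-invariant symplectic form $\omega'$ with moment map $\Psi'$, such that on some smaller neighbourhood $U_0'\subset U_0$ one has $\varphi^*\omega'=\omega_{\text{lin}}$ and $\varphi^*\Psi'=\Psi_{\text{lin}}+\Psi(p)$, while $\omega'=\omega$ and $\Psi'=\Psi$ outside $U$. Since $\omega_1$ and $\omega_{\text{lin}}$ are two $J_0$-tame $\SS^1$-invariant forms agreeing at the fixed point $0$, the relevant Moser path lies in the convex set of $J_0$-tame $\SS^1$-invariant forms, so taming by $J$ on $W$ is preserved throughout; taming by the action and $\SS^1$-invariance are preserved by construction.

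Third, for every sufficiently small $t>0$, the classical K\"ahler blow-up of $(\CC^n,\omega_{\text{lin}},J_0)$ at $0$ produces a $\CC^\times$-invariant K\"ahler form $\widetilde\omega_t$ on $q_0\colon\widetilde{\CC^n}\to\CC^n$, with $\SS^1$-moment map $\widetilde\Psi_t$, satisfying $\widetilde\omega_t=q_0^*\omega_{\text{lin}}$ and $\widetilde\Psi_t=q_0^*\Psi_{\text{lin}}$ outside a small neighbourhood of the exceptional divisor $E_0=q_0^{-1}(0)$, and $[\widetilde\omega_t]=q_0^*[\omega_{\text{lin}}]-t\mathcal{E}_0$, where $\mathcal{E}_0$ is Poincar\'e dual to $E_0$. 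The biholomorphism $\varphi$ lifts to a $\CC^\times$-equivariant biholomorphism $\widehat\varphi\colon\widetilde{U_0}\to q^{-1}(U)$. Choosing $t$ small enough that $\widetilde\omega_t=q_0^*\omega_{\text{lin}}$ outside $q_0^{-1}(U_0')$, define $\widehat\omega$ to be $\widehat\varphi_*\widetilde\omega_t$ on $\widehat\varphi(q_0^{-1}(U_0'))$ and $q^*\omega'$ elsewhere; these agree on the overlap, where $\widetilde\omega_t=q_0^*\omega_{\text{lin}}=q_0^*\varphi^*\omega'$. Define $\widehat\Psi$ analogously. Then $\widehat\omega$ is a closed $\SS^1$-invariant symplectic form, tamed by the action globally, and tamed by $\widehat J$ on $q^{-1}(W)$ (because $\widetilde\omega_t$ is K\"ahler near $E_0$ and $\omega'$ is $J$-tame on $W$); the identity $[\widehat\omega]=q^*[\omega]-t\mathcal{E}$ follows from the local one together with the fact that the modification $\omega\rightsquigarrow\omega'$ is cohomologically trivial. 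Because all modifications live inside $V$, we also obtain $\widehat\omega=q^*\omega$ and $\widehat\Psi=q^*\Psi$ on $\widehat M\smallsetminus q^{-1}(V)$.

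The principal obstacle is the second step: the equivariant Moser-type normalisation of $\omega$ to the linear K\"ahler model with simultaneous control of \emph{both} taming conditions and of the moment map. Ordinary equivariant Moser produces a symplectomorphism but gives no control of taming; the tamed-form modification lemmas referenced in Remark~\ref{r:kaehlar} are constructed precisely to propagate both taming properties along a Moser interpolation, and invoking them carefully in this linear setting is the technical heart of the proof.
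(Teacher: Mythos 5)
Your plan diverges from the paper's argument in a way that runs into a real obstacle. In the second step you want to modify $\omega$ so that $\varphi^*\omega' = \omega_{\mathrm{lin}}$ near $0$, and you justify the taming control by asserting that $\omega_1=\varphi^*\omega$ and $\omega_{\mathrm{lin}}$ ``agree at the fixed point $0$'' so that the Moser path lies in a convex set of tame forms. But $\omega_1|_0$ need not equal $\omega_{\mathrm{lin}}|_0$: although its $(1,1)$-part is positive definite (and can be diagonalised by a $\CC$-linear, $\SS^1$-equivariant change of coordinates), $\omega_1|_0$ will in general have nonzero $(2,0)$- and $(0,2)$-parts, and these survive every $\CC$-linear coordinate change. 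The paper's own Lemma~\ref{l:modifyforms2} makes this explicit: it only normalises the $(1,1)$-part, and leaves the $(2,0)$- and $(0,2)$-parts as nonzero constants. Without pointwise agreement at $0$, the estimate underlying the rescaling trick in Lemma~\ref{l:modifyforms1b} (that the primitive $\mu$ with $d\mu=\omega_1-\omega_{\mathrm{lin}}$ vanishes to second order, so that $d(\rho_\lambda\mu)$ is $C^0$-small for large $\lambda$) fails, and the cross-term $d\rho\wedge\mu$ destroys taming in the transition region; a bare convex interpolation of the forms does not address this term at all.

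More to the point, the normalisation step is unnecessary here. Because $p$ is a smooth point, the blow-down map $q$ is smooth and holomorphic, so $q^*\omega$ is already a closed $\SS^1$-invariant $2$-form on $\widehat M$; since $J$ tames $\omega$ at $p$ and $\ker(\pi_*)\cap\ker(q_*)=\{0\}$ along $E$, one has $q^*\omega(X,\widehat J X)\geq 0$ on $E$ with equality only when $\pi_*X\neq 0$. The paper therefore works directly with $q^*\omega$ and adds $t\eta$, where $\eta$ is a closed $(1,1)$-form supported in a small neighbourhood of $E$ that equals $\pi^*\Omega$ near $E$ (and has an explicit Hamiltonian $\Phi$); the three-region argument then gives a tame symplectic form for small $t$, with $[\eta]=-\mathcal{E}$. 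The form modification machinery of Lemmas~\ref{l:modifyforms1b} and~\ref{l:modifyforms2} is only invoked in the orbifold Proposition~\ref{p:blowup2}, precisely because there $q$ is not smooth and one cannot pull $\omega$ back directly---and even there only the $(1,1)$-part is put into standard shape. You should either adopt the direct $q^*\omega + t\eta$ argument, or, if you insist on pre-normalising, weaken the target to ``constant, with $(1,1)$-part equal to $\omega_{\mathrm{lin}}$'' and carry the constant $(2,0)$- and $(0,2)$-parts through the rest of the argument.
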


\begin{proof}
We may assume that $W$ is open.
By Lemma~\ref{l:bochner}, there exists an $\SS^1$-equivariant biholomorphism from an $\SS^1$-invariant neighbourhood of  $0 \in \CC^n$ 
to a neighbourhood $U\subseteq W\cap V$ of $p$, 
where $\SS^1$ acts on $\CC^n$ with weights $(\alpha_1,...,\alpha_n)$.  
We identify these neighbourhoods, and also identify $q^{-1}(U)$ with a neighbourhood of the exceptional divisor $E$ in 
$$\widehat{\CC^n}:=(\CC^n\smallsetminus\{0\})\times_{\CC^\times}\CC,$$ 
where $\CC^\times$ acts on  
$(\CC^n \smallsetminus \{0\}) \times \C$ by
$$\lambda\cdot(z_1,...,z_n;u)=(\lambda z_1,...,\lambda z_n;\lambda^{-1}u).$$  
In these coordinates, the blow-down map $q\colon \widehat{\CC^n}\to\CC^n$ sends $[z_1,...,z_n;u]$ to $(uz_1,...,uz_n)$. 
Define $\pi\colon \widehat{\CC^n}\to\CC\PP^{n-1}$ by  $\pi\big([z_1,...,z_n;u]\big) =[z_1,...,z_n]$. 

Since the restriction $q\colon \widehat{M}\smallsetminus E\to M\smallsetminus\{p\}$ is $\CC^\times$-equivariant and biholomorphic, 
the closed form $q^*\omega\in\Omega^2(\widehat{M})^{\SS^1}$ is symplectic on $\widehat{M}\smallsetminus E$, tamed by the action on $\widehat{M} \smallsetminus E$, and
 tamed on $q^{-1}(W)\smallsetminus E$.
 Since $q$ is holomorphic and $\ker(\pi_*|_m)\cap\ker(q_*|_m)=\{0\}$, 
$$q^*\omega(X,\widehat{J}(X))\geq 0$$ 
for all $m\in E$ and $X\in T_m\widehat{M}$, with equality impossible if $\pi_*X=0$ and $X \neq 0$. 
Finally, since $q \colon \widehat{M}\to M$ is equivariant, 
$\xi_{\widehat{M}} \hook q^*\omega = - d q^* \Psi$.

Let $\rho \colon \RR \to \RR$ be a smooth function which is $1$ on a neighbourhood of $0$ and such that $z\mapsto \rho(|z|^2)$ has compact support in $U$. 
 Define $f\colon \RR^+\to\RR$ by $f(t)=\frac{1}{2 \pi} \rho(t)\ln t$ (\emph{cf.} \cite[Section 5]{GS89}). 
 As shown in \cite[Section 4]{GS89}, on the complement of $E$, 
$$\pi^*(\Omega)=q^*\Big(\frac{i}{2\pi}\del\delbar\ln\big(|z|^2\big)\Big),$$ 
where $\Omega$ is the Fubini-Study form on $\CC\PP^{n-1}$.  
Therefore, there exists a closed real form $\eta\in\Omega^{1,1}(\widehat{M})^{\SS^1}$ 
with support in $U$  equal to $q^*(i \del \delbar(f(|z|^2)))$ on $q^{-1}(U)\smallsetminus E$, and equal to $ \pi^*(\Omega)$ near $E$.  
Define $\Phi:\widehat{M}\to\RR$ with support in $U$  by 
$$\Phi(z)=q^*\bigg(\sum_{j=1}^n \alpha_j|z_j|^2f'(|z|^2)\bigg)$$ on $q^{-1}(U)\smallsetminus E$ and by 
\begin{equation}\label{e:coord-Phi}
 \pi^*\Bigg(\frac{\sum_{j=1}^n \alpha_j|z_j|^2}{2 \pi |z|^2}\Bigg)
\end{equation}
near $E$. 
Then $\xi_{\widehat{M}}\hook\eta=-d\Phi$
by a straightforward calculation on $U \smallsetminus \{0\}$.
The restriction of $[\eta]\in H^2(\widehat{M})$ to $E$ is the positive
generator of $H^2(E;\Z) \cong \Z$,
which is the negative of the Euler class of the normal bundle
to $E$ in $\widehat{M}$.
Moreover, since $\eta$ is supported in a tubular neighbourhood of $E$,
 the restriction of $[\eta]$ to $\widehat{M} \smallsetminus E$ vanishes.  
Hence,  $[\eta] = - \mathcal{E}$.

Therefore, for all $t\in\RR$, $$[q^*\omega+t\eta]=q^*[\omega]-t\mathcal{E} \text{ and }\xi_{\widehat{M}}\hook(q^*\omega+t\eta)=-d(q^*\Psi+t\Phi).$$
It remains to show that $q^*\omega+t\eta\in\Omega^2(\widehat{M})^{\SS^1}$ is symplectic, is tamed by the action (everywhere), and is
tamed on $q^{-1}(W)$, for all sufficiently small $t>0$.  We will do this by looking at three regions.
\begin{enumerate}
\item On a neighbourhood of the exceptional divisor, $\eta=\pi^*(\Omega)$.  Since $\pi$ is holomorphic and $\Omega$ is K\"ahler, this implies that $\eta(X,\widehat{J}(X))\geq 0$ for all $X\in T_m\widehat{M}$, with equality exactly if $\pi_*(X)=0$. 
 Moreover, by the second paragraph of this proof, $q^*\omega(X,\widehat{J}(X))\geq 0$ with equality impossible if $\pi_*X=0$ and $X \neq 0$.  
Therefore, for all $t > 0$,
$$\big(q^*\omega+t\eta)(X,\widehat{J}(X)\big)>0$$ 
for all non-zero $X\in T_m\widehat{M}$, that is,
$q^*\omega + t \eta$ is tamed.

\item On the complement of $q^{-1}(\supp(\rho))\subseteq\widehat{M}$, the form $\eta$ vanishes; 
hence, by the second paragraph, $q^*\omega+t\eta$ is symplectic, is tamed by the action (everywhere), and is tamed on $q^{-1}(W)$, for all $t$.

\item The complement of the open sets considered in (1) and (2) is compact.  Since $q^*\omega$ is tamed on this set, $q^*\omega+t\eta$ is also tamed for all sufficiently small $t$.
\end{enumerate}
\end{proof}

\begin{remark}
In Proposition~\ref{p:blowup2-mfld} (or Proposition~\ref{p:blowup2} below), if $\Psi$ is proper, then we may
choose $\widehat{\omega}$ and $\widehat{\Psi}$ so that $\widehat{\Psi}$
is proper.  To see this, let $V$ be a neighbourhood with compact closure.
\end{remark}

The exact same argument shows that Proposition~\ref{p:blowup2-mfld} still holds
if $(M,J)$ is a complex {\em orbifold},
as long as we blow up at a smooth point $p \in M$. 
In order to prove Proposition~\ref{p:prop}, we need to 
extend that  argument  to the
blow-up of a complex orbifold
at an isolated $\Z_2$-singularity.
First, we recall the definition of  blow-up 
in this case.

\begin{definition}\label{d:blow-up}
Let $\ZZ_2$ act diagonally on $\CC^n$.
The \textbf{blow-up} of $\CC^n/\ZZ_2$ at $[0]$ is 
\begin{equation*}\label{eq:blow-up}
\widehat{\CC^n/\ZZ_2}:=(\CC^n\smallsetminus\{0\})\times_{\CC^\times}\CC, 
\end{equation*}
where $\CC^\times$ acts on $(\CC^n\smallsetminus\{0\})\times\CC$ by $$\lambda\cdot(z_1,...,z_n;u)=(\lambda z_1,...,\lambda z_n;\lambda^{-2}u).$$ 
The \textbf{blow-down map} $q\colon \widehat{\CC^n/\ZZ_2}\to\CC^n/\ZZ_2$ is given
by $$q([z_1,...,z_n;u]):=[\sqrt{u}z_1,...,\sqrt{u}z_n].$$  
\end{definition}
Unfortunately, although the map $q$ is continuous and the pullback $q^*f$ is holomorphic for every holomorphic function $f:\CC^n/\ZZ_2\to\CC$, the blow-down map $q$ is not smooth. 
 For example, $[w]\mapsto|w|^2$ is a smooth function on $\CC^n/\ZZ_2$, but its pullback $[z;u]\mapsto |u||z|^2$ is not a smooth function on $\widehat{\CC^n/\ZZ_2}$. 
 (Note that, when $n=1$, $\widehat{\CC/\ZZ_2}\cong\CC$ and $q(u)=[\sqrt{u}\,]$.)
  However, the exceptional divisor $E:=q^{-1}\left([0]\right)$ is biholomorphic to $\CC\PP^{n-1}$, and  
$q$ restricts to a biholomorphism from $\widehat{\CC^n/\ZZ_2}\smallsetminus E$ to $\CC^n/\ZZ_2 \smallsetminus\{[0]\}$. Thus,
there is a well-defined blow-up of a complex orbifold at any isolated $\ZZ_2$-singularity.

Since the blow-down map is not smooth,
we  need to modify the symplectic form locally before pulling it back
to the blow-up. We will do this in two stages, using
the following criterion for K\"ahler forms,
which we   adapted from \cite[Lemma 5.3]{GS89}.

\begin{lemma}\label{l:pointwise kaehler}
Given $n>1$ and a smooth function $f\colon \RR\to\RR$, 
the form $\omega=\frac{i}{2}\del\delbar f\big(|z|^2\big)\in\Omega^{1,1}(\CC^n)$ is K\"ahler at $z_0\in\CC^n$ exactly if 
$$f'\big(|z_0|^2\big)>0 \quad \mbox{and} \quad f'\big(|z_0|^2\big)+|z_0|^2f''\big(|z_0|^2\big)>0.$$
\end{lemma}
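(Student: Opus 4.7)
The plan is to compute $\omega = \tfrac{i}{2}\del\delbar f(|z|^2)$ explicitly, encode it as a Hermitian matrix $H$ at the point $z_0$, and then diagonalise $H$ along the decomposition of $\CC^n$ determined by $z_0$.

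First, I would compute
$$\delbar f(|z|^2) = f'(|z|^2) \sum_{j} z_j\, d\bar z_j$$
and then apply $\del$ to obtain
$$\del\delbar f(|z|^2) = \sum_{j,k} \bigl[f'(|z|^2)\, \delta_{jk} + f''(|z|^2)\, z_j \bar z_k\bigr] dz_k \wedge d\bar z_j.$$
Using the standard identity $\tfrac{i}{2}(dz_k \wedge d\bar z_j)(v, Jv) = v_k \bar v_j$ for a real tangent vector $v$ with $dz_\ell(v) = v_\ell$, this yields $\omega(v, Jv) = \sum_{k,j} h_{kj} v_k \bar v_j$ at $z_0$, where
$$H = (h_{kj}) = f'(|z_0|^2)\, I + f''(|z_0|^2)\, \bar z_0\, z_0^{T}$$
(with $\bar z_0$ viewed as a column vector and $z_0^T$ as a row vector). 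Hence $\omega$ is K\"ahler at $z_0$ if and only if the Hermitian matrix $H$ is positive definite.

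The second summand of $H$ is a Hermitian rank-one matrix with image $\CC \cdot \bar z_0$. Since $z_0^T \bar z_0 = |z_0|^2$, one computes
$$H \bar z_0 = \bigl(f'(|z_0|^2) + |z_0|^2 f''(|z_0|^2)\bigr)\, \bar z_0,$$
while $H w = f'(|z_0|^2)\, w$ for every $w$ in the Hermitian orthogonal complement of $\bar z_0$. Thus $H$ is diagonalisable with eigenvalues $f'(|z_0|^2) + |z_0|^2 f''(|z_0|^2)$ (with multiplicity $1$) and $f'(|z_0|^2)$ (with multiplicity $n-1$), and positive definiteness of $H$ is equivalent to positivity of both, which is the stated criterion.

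The one point to watch is the hypothesis $n > 1$: it guarantees that the eigenvalue $f'(|z_0|^2)$ genuinely appears even when $z_0 \neq 0$, so the condition $f'(|z_0|^2) > 0$ is non-redundant. (When $z_0 = 0$, the rank-one term vanishes and both inequalities coalesce into $f'(0) > 0$.) Beyond that, the argument is routine linear algebra; the only subtlety is matching sign conventions between $\omega(v, Jv) > 0$ and positive definiteness of $H$, which I would verify at the start by the direct computation indicated above.
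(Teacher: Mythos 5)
Your proof is correct and takes essentially the same approach as the paper: compute the complex Hessian $\bigl[\partial^2 g/\partial z_j\,\partial\overline{z}_k\bigr] = \bigl[f'\delta_{jk} + f''\,\overline{z}_j z_k\bigr]$, observe that the rank-one summand has eigenvalues $0$ (multiplicity $n-1$) and $|z_0|^2$ (multiplicity $1$), and conclude the Hessian's eigenvalues are $f'(|z_0|^2)$ and $f'(|z_0|^2)+|z_0|^2 f''(|z_0|^2)$. You spell out the eigenvector calculation a bit more explicitly than the paper does, but the structure of the argument is identical.
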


\begin{proof}
Define $g\colon \CC^n\to\RR$ by $g(z)=f\big(|z|^2\big)$. 
The form $\omega=\frac{i}{2}\del\delbar g$ is K\"ahler exactly if the eigenvalues of the Hermitian matrix
\begin{equation*}\label{e:hermitian}
\left[\frac{\del^2 g}{\del z_j \, \del\overline{z}_k}\right] = \Big[ f'\big(|z|^2\big)\delta_{jk}+f''\big(|z|^2\big)\overline{z}_jz_k \Big]
\end{equation*}
are all positive.  Since the matrix $[\overline{z}_jz_k]$ is of rank 1 and has eigenvalues 0 and $|z|^2$, 
the matrix above has eigenvalues $f'\big(|z|^2\big)$ and $f'\big(|z|^2\big)+|z|^2f''\big(|z|^2\big).$
\end{proof}

\begin{remark}\label{r:pointwise kaehler}
Similarly, given a smooth function $f\colon \RR\to\RR$, 
the form $\omega=\frac{i}{2}\del\delbar f\big(|z|^2\big)\in\Omega^{1,1}(\CC)$ is K\"ahler at $z_0\in\CC$ exactly if 
$f'\big(|z_0|^2\big)+|z_0|^2f''\big(|z_0|^2\big)>0.$
\end{remark}

The next lemma translates \cite[Theorem 1.10]{reyer} to our setting.

\begin{lemma}\label{l:modifyforms1b}
Let $G$ be a closed subgroup of $\U(n)$.  Let $U$ be a $G$-invariant neighbourhood of $0 \in \CC^n$, 
and let $\omega\in\Omega^2(U)^G$ be a tamed symplectic form. 
 Then there exists $\nu\in\Omega^1(U)^G$ with compact support such that $\widetilde{\omega}:=\omega-d\nu$ is a tamed symplectic form
and is constant in a neighbourhood of $0$.  
Moreover, if $\omega\in\Omega^{1,1}(U)^G$ is  K\"ahler, 
then we may also choose $\nu$ so that $\widetilde{\omega}$
is K\"ahler.
\end{lemma}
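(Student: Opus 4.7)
The plan is to subtract the differential of a cutoff Poincar\'e primitive; tameness and positivity of $(1,1)$-forms are pointwise open conditions, so a sufficiently small perturbation preserves them. Let $\omega_0 \in \Omega^2(\CC^n)$ denote the translation-invariant $2$-form whose value at every point equals $\omega|_0$; since $G \subset \U(n)$ acts linearly and fixes $0$, the form $\omega_0$ is $G$-invariant. Shrinking $U$ so that it is $G$-invariant and star-shaped, apply the equivariant Poincar\'e lemma (radial homotopy followed by averaging over $G$ against Haar measure) to produce a $G$-invariant $1$-form $\alpha \in \Omega^1(U)^G$ with $d\alpha = \omega - \omega_0$. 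Because $\omega - \omega_0$ vanishes at $0$, the radial integral formula gives the pointwise bound $|\alpha(z)| = O(|z|^2)$.

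Next, fix a smooth cutoff $\chi \colon \RR \to [0,1]$ with $\chi \equiv 1$ on $[0,1/2]$ and $\chi \equiv 0$ on $[1,\infty)$. For small $\eps > 0$ set $\rho_\eps(z) := \chi(|z|^2/\eps^2)$, which is radial (hence $G$-invariant), supported in $B_\eps(0) \subset U$, and satisfies $|d\rho_\eps| = O(1/\eps)$. Put $\nu := \rho_\eps \alpha \in \Omega^1(U)^G$, which has compact support in $U$. A direct calculation gives
$$\widetilde{\omega} := \omega - d\nu = (1 - \rho_\eps)\omega + \rho_\eps\omega_0 - d\rho_\eps \wedge \alpha,$$
so $\widetilde{\omega} = \omega_0$ (constant) on $B_{\eps/\sqrt{2}}(0)$ and $\widetilde{\omega} = \omega$ outside $B_\eps(0)$.

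The main technical point is to verify that $\widetilde{\omega}$ is tamed and nondegenerate on the transition annulus $B_\eps(0) \smallsetminus B_{\eps/\sqrt{2}}(0)$. On this annulus $|d\rho_\eps \wedge \alpha| \leq |d\rho_\eps|\cdot|\alpha| = O(\eps)$, and $\omega$ itself differs from $\omega_0$ by $O(\eps)$ by Taylor's theorem, so $\widetilde{\omega}$ differs from both $\omega$ and $\omega_0$ by $O(\eps)$ uniformly on the compact closure of $B_\eps(0)$. Since $\omega_0$ tames the $G$-action at $0$ it tames it on a fixed neighbourhood of $0$ by continuity, and the same holds for $\omega$. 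Because tameness of the action and nondegeneracy are pointwise open conditions that hold uniformly on this compact set for the unperturbed forms, the $O(\eps)$ perturbation $\widetilde{\omega}$ remains tamed symplectic for all sufficiently small $\eps$. The heart of the argument is that the vanishing order $O(|z|^2)$ of $\alpha$ defeats the $O(1/\eps)$ growth of $d\rho_\eps$; this is the step I expect to be the only real obstacle.

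For the K\"ahler addendum, assume $\omega \in \Omega^{1,1}(U)^G$ is K\"ahler. Apply the equivariant $\del\delbar$-lemma on the star-shaped $G$-invariant neighbourhood to obtain a $G$-invariant real smooth function $f$ with $\omega - \omega_0 = \tfrac{i}{2}\del\delbar f$. By subtracting the real part of a suitable holomorphic quadratic polynomial (which is pluriharmonic and hence does not affect $\del\delbar f$) and then averaging over $G$, arrange $f(0)=0$, $df(0)=0$, and vanishing of the mixed second derivatives of $f$ at $0$; then $|f(z)| = O(|z|^3)$ near $0$. Replace $\nu$ by $\tfrac{1}{2}d^c(\rho_\eps f)$, where $d^c := i(\delbar - \del)$ satisfies $dd^c = 2i\del\delbar$. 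Then
$$\widetilde{\omega} := \omega - d\nu = \omega - \tfrac{i}{2}\del\delbar(\rho_\eps f)$$
is real, $G$-invariant, of type $(1,1)$, equal to $\omega_0$ on $B_{\eps/\sqrt{2}}(0)$, and equal to $\omega$ outside $B_\eps(0)$. On the annulus each term of $\del\delbar(\rho_\eps f)$ is $O(\eps)$: $|\del\delbar\rho_\eps| = O(1/\eps^2)$ against $|f| = O(\eps^3)$, $|d\rho_\eps| = O(1/\eps)$ against $|df| = O(\eps^2)$, and $|\del\delbar f| = O(\eps)$ uniformly. By Lemma~\ref{l:pointwise kaehler} (or directly from uniform positivity of $\omega$ on the compact closure of $B_\eps(0)$), $\widetilde\omega$ therefore remains K\"ahler for all sufficiently small $\eps$.
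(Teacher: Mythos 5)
Your proof follows the paper's approach almost step for step: replace $\omega$ by the constant form $\omega_0 = \omega|_0$, choose a $G$-invariant Poincar\'e primitive for $\omega - \omega_0$ whose vanishing order at $0$ is one higher than naively expected (order $|z|^2$ for the $1$-form in the tame case; order $|z|^3$ for the potential in the K\"ahler case), and then beat the $O(1/\eps)$ derivative loss from the shrinking cutoff. The paper phrases the cutoff as $\rho_\lambda(x)=\rho(\lambda x)$ with $\lambda\to\infty$ rather than $\rho_\eps$ with $\eps\to 0$, and in the tame case obtains $|\mu|=O(|x|^2)$ by normalizing the coefficient functions $f_i$ so that $f_i(0)=\partial_j f_i(0)=0$ and quoting the mean value theorem, which matches your radial-integral-formula argument. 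Your writeup is in fact slightly more explicit than the paper's about the K\"ahler estimate: the paper only normalizes $h(0)=\del h|_0=\delbar h|_0=0$, which gives $|h|=O(|z|^2)$, while the term $(\del\delbar\rho_\lambda)h$ needs $|h|=O(|z|^3)$ to be $O(1/\lambda)$; your step of subtracting $\operatorname{Re}(Q)$ to kill the full $2$-jet supplies precisely this and fills a small gap the paper elides under ``a calculation similar to the one above.''

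One small wording error: you say you ``arrange \dots vanishing of the mixed second derivatives of $f$ at $0$.'' The mixed second derivatives $\partial_j\overline\partial_k f(0)$ already vanish automatically, since $\tfrac{i}{2}\del\delbar f|_0=(\omega-\omega_0)|_0=0$. What subtracting $\operatorname{Re}(Q)$ achieves is killing the \emph{pure} (holomorphic and anti-holomorphic) second derivatives $\partial_j\partial_k f(0)$ and $\overline\partial_j\overline\partial_k f(0)$; combined with the automatic vanishing of the mixed part, this gives the full vanishing $2$-jet and hence $|f|=O(|z|^3)$. The ``averaging over $G$'' after subtracting $\operatorname{Re}(Q)$ is harmless but unnecessary, since $G$-invariance of $f$ already forces the pure Hessian, and hence $\operatorname{Re}(Q)$, to be $G$-invariant.
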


\begin{proof}
We may assume that $U$ is a ball centred at $0$. 
Pick a $G$-invariant smooth function $\rho \colon \CC^{n} \to [0,1]$ so that $\rho$ has compact support $K$ in $U$ and is $1$ on a neighbourhood of $0$.
Given $\lambda>0$, define $\rho_\lambda \colon \CC^{n}\to [0,1]$  by $\rho_\lambda(x) = \rho(\lambda x)$; 
the support of $\rho_\lambda$ is $\frac{1}{\lambda} K$.

Identify $\CC^n$ with $\RR^{2n}$.  
Let $\omega'\in\Omega^2(U)^G$ be the unique constant form satisfying $\omega'|_0=\omega|_0$.  By the Poincar\'e Lemma (and averaging), there exists $\mu=\sum_if_idx_i \in \Omega^1(U)^G$ such that $d \mu = \omega-\omega'.$  
 Since $d\mu|_0=0$, we may assume that $f_i(0)=\frac{\partial f_i}{\partial x_j}(0)=0$ for all $i,j$.   
Therefore, by the Mean Value Theorem, there exists $C>0$ so that 
$\frac{\partial f_i}{\partial x_j}(x) \leq C|x|$ 
and 
$f_i(x) \leq C|x|^2$ 
for all $x\in K$ and all $i,j$. 
 Since tameness is an open condition, a straightforward calculation in coordinates
shows that $\omega - d(\rho_\lambda\, \mu)$ is tamed for sufficiently large $\lambda$. 
Let $\nu = \rho_\lambda\,\mu\in\Omega^1(U)^G$.

If $\omega\in\Omega^{1,1}(U)$, then by the Poincar\'e Lemma for $d$ and $\del$,  
there exists a smooth $G$-invariant potential
function $h\colon U\to\RR$ such that $\omega - \omega' = \frac{i}{2} \del \delbar ( \rho_\lambda h).$  
Since $\omega|_0=\omega'|_0$, we may assume that $h(0) = 0$, $\partial h|_0 = 0$, and $\overline{\del} h|_0 = 0$.  
By a calculation similar to the one above, if we
let $\nu := \frac{i}{4}\big(\delbar(\rho_\lambda h)-\del(\rho_\lambda h)\big)
\in \Omega^1(U)^G$, 
then $\widetilde{\omega} := \omega - d \nu$ is
K\"ahler for sufficiently large $\lambda$.
\end{proof}

\begin{lemma}\label{l:modifyforms2}
Let $G$ be a closed subgroup of $\U(n)$. 
Let $U$ be a $G$-invariant neighbourhood of $0 \in \CC^n$, 
and let $\omega\in\Omega^2({U})^G$ be a tamed symplectic form.
Given a smooth function $f \colon \R \to \R$ so that the function
$z \mapsto f(|z|^2)$ is strictly plurisubharmonic on
$\CC^n \smallsetminus \{0\}$,
there exists $\nu\in\Omega^1({U})^G$ with compact support such that $\widetilde{\omega}:=\omega-d\nu$ satisfies the following:
\begin{enumerate}
\item $\widetilde{\omega}$ is a tamed symplectic form
on ${U} \smallsetminus \{0\}$; and
\item  
$\widetilde{\omega}^{1,1}=
\frac{i}{2} \del \delbar f \big( |z|^2 \big)$, and 
$\widetilde{\omega}^{2,0}$ and $\widetilde{\omega}^{0,2}$ are constant,
on a neighbourhood of $0 \in U$.
\end{enumerate}
Moreover, if $\omega\in\Omega^{1,1}({U})^G$ is K\"ahler, then we may 
also choose $\nu$ so that $\widetilde{\omega}$ is K\"ahler on $U \smallsetminus \{0\}$.
\end{lemma}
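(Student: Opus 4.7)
The plan is to apply Lemma~\ref{l:modifyforms1b} first to reduce to the case that $\omega$ equals its constant value $\omega_c := \omega|_0$ on a neighbourhood of $0$, and then to further perturb the $(1,1)$-component by a $\del\delbar$-potential cut off near $0$.

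First, by Lemma~\ref{l:modifyforms1b}, I obtain $\nu_1 \in \Omega^1(U)^G$ with compact support such that $\omega_1 := \omega - d\nu_1$ is tamed (K\"ahler in the K\"ahler case) and equals $\omega_c$ on a $G$-invariant neighbourhood $V \subseteq U$ of $0$. Writing $\omega_c^{1,1} = \frac{i}{2}\sum c_{jk}\,dz_j\wedge d\bar z_k$ with $c$ a $G$-equivariant Hermitian matrix, set
\[
Q_c(z) := \sum_{j,k} c_{jk}\,z_j \bar z_k, \qquad h(z) := Q_c(z) - f(|z|^2) + f(0).
\]
Then $h$ is $G$-invariant, $h(0) = 0$, $\del h|_0 = \delbar h|_0 = 0$, and
\[
\tfrac{i}{2}\del\delbar h \;=\; \omega_c^{1,1} - \omega_f, \qquad \omega_f := \tfrac{i}{2}\del\delbar f(|z|^2).
\]

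Next, choose a $G$-invariant smooth cutoff $\rho\colon U \to [0,1]$ supported in $V$ and equal to $1$ on a smaller $G$-invariant neighbourhood $V_0$ of $0$, and put
\[
\nu_2 := \tfrac{i}{4}\bigl(\delbar(\rho h) - \del(\rho h)\bigr) \in \Omega^1(U)^G, \qquad \nu := \nu_1 + \nu_2,
\]
so that $d\nu_2 = \tfrac{i}{2}\del\delbar(\rho h)$ is pure $(1,1)$ and $\widetilde\omega := \omega - d\nu = \omega_1 - \tfrac{i}{2}\del\delbar(\rho h)$. On $V_0$ where $\rho \equiv 1$,
\[
\widetilde\omega \;=\; \omega_c - \tfrac{i}{2}\del\delbar h \;=\; \omega_c^{2,0} + \omega_f + \omega_c^{0,2},
\]
exhibiting claim~(2). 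For $z \in V_0 \setminus \{0\}$ and any real $X \neq 0$, the $(2,0)$- and $(0,2)$-parts annihilate $(X,JX)$, so $\widetilde\omega(X,JX) = \omega_f(X,JX) > 0$ by strict plurisubharmonicity of $f(|z|^2)$ on $\CC^n \setminus \{0\}$; outside $\supp\rho$, $\widetilde\omega = \omega_1$ is tamed. In the K\"ahler case, $\omega_1$ and $\tfrac{i}{2}\del\delbar(\rho h)$ are both of type $(1,1)$, so $\widetilde\omega$ is pure $(1,1)$ throughout $U$ and tameness coincides with K\"ahlerness.

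The main obstacle is tameness on the transition annulus $\{0 < \rho < 1\} \subset V$, where $\omega_1 = \omega_c$ and
\[
\widetilde\omega \;=\; \omega_c^{2,0} + \omega_c^{0,2} + (1-\rho)\omega_c^{1,1} + \rho\,\omega_f - E,
\]
with $E := \tfrac{i}{2}\bigl[h\,\del\delbar\rho + \del\rho\wedge\delbar h - \delbar\rho\wedge\del h\bigr]$. The main term is a convex combination of two forms tamed on the closure of this annulus, hence is itself tamed with a strictly positive taming constant. Unlike in Lemma~\ref{l:modifyforms1b} --- where the analogous potential vanished to order three at $0$ and the cutoff error vanished under rescaling --- the complex Hessian of $h$ at $0$ here equals $c - f'(0)I$, which is generically non-zero, so $E$ remains only uniformly bounded under the rescaling $\rho(z) \mapsto \rho(\lambda z)$. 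Decomposing $h = h_2 + h_R$ into its Hermitian quadratic part $h_2$ and a remainder $h_R = O(|z|^4)$, the $h_R$-contribution to $E$ vanishes under rescaling as in Lemma~\ref{l:modifyforms1b}, while the $h_2$-contribution, which does not vanish, must be estimated directly against the positive taming constant of the main term through a careful choice of cutoff profile.
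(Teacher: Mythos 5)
Your proof is incomplete, and the gap you flag at the end is not one that can be closed by ``a careful choice of cutoff profile.'' The paper takes a genuinely different route that avoids the transition annulus entirely.

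The trouble with the cut-off potential $\rho h$ is exactly the one you identify: after Lemma~\ref{l:modifyforms1b} and a linear change of variables to make $c=I$, the quadratic part of $h$ is $h_2(z)=(1-f'(0))|z|^2$, which is generically non-zero, and the contributions of $h_2$ to $E = \tfrac{i}{2}[h\,\del\delbar\rho + \del h\wedge\delbar\rho + \del\rho\wedge\delbar h]$ are scale-invariant: on a transition annulus of any scale, $|z|^2\,\del\delbar\rho$, $\del\rho\wedge\delbar h_2$ and $\del h_2\wedge\delbar\rho$ all have operator norm of order $1$, for \emph{any} admissible profile $\rho$. Meanwhile the main term $(1-\rho)\omega_c^{1,1}+\rho\,\omega_f$ does \emph{not} have a taming constant bounded below independently of the annulus scale: by Lemma~\ref{l:pointwise kaehler} the eigenvalues of $\omega_f$ at $|z|^2 = t$ are $f'(t)$ and $f'(t)+tf''(t)$, and in the application actually used (Proposition~\ref{p:blowup2}, where $f(t)=t^2$) both tend to $0$ as $t\to 0^+$; near the inner edge of the annulus, where $\rho\approx 1$, the convex combination therefore degenerates. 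So the error $E$ is $O(1)$ while the main term is $o(1)$, and no cutoff profile can save the argument.

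What the paper does instead is to modify the radial function $f$ itself, rather than cutting off a potential. Reducing to $\omega^{1,1}=\tfrac{i}{2}\del\delbar|z|^2$, one builds a smooth $h\colon\R\to\R$ with $h(t)=tf'(t)$ near $0$, $h(t)=t$ for $t>r$, and $h,h'>0$ on $(0,\infty)$ (possible precisely because strict plurisubharmonicity forces $f'(t)>0$ and $tf''(t)+f'(t)>0$ for $t>0$), and then sets $\widetilde f'(t)=h(t)/t$. Since $t\widetilde f''(t)+\widetilde f'(t)=h'(t)>0$ and $\widetilde f'(t)>0$, Lemma~\ref{l:pointwise kaehler} says $\tfrac{i}{2}\del\delbar\widetilde f(|z|^2)$ is K\"ahler on $\CC^n\smallsetminus\{0\}$ with \emph{no} transition error to estimate. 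Taking $g(z)=|z|^2-\widetilde f(|z|^2)$ and $\nu=\tfrac{i}{4}(\delbar g-\del g)$ then gives $\widetilde\omega^{1,1}=\tfrac{i}{2}\del\delbar\widetilde f(|z|^2)$ with $\nu$ compactly supported (because $g$ is constant for $|z|^2>r$), and the $(2,0)$, $(0,2)$ parts of $\widetilde\omega$ are constant and annihilate $(X,JX)$, so taming is automatic. In short: the interpolation must be done in the one-variable function $f$, where Lemma~\ref{l:pointwise kaehler} gives an exact criterion for positivity, not by multiplying the potential by a bump function, where there is no analogous control.
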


\begin{proof}
By Lemma~\ref{l:modifyforms1b}, we may assume that $\omega$ is constant.  Write $\omega=\omega^{1,1}+\omega^{2,0}+\omega^{0,2}$, where $\omega^{j,k}\in\Omega^{j,k}(\CC^n)$  for all $j,k$.
By a linear change of variables, we may further assume that $\omega^{1,1}=\frac{i}{2}\sum_jdz_j\wedge d\overline{z}_j =  \frac{i}{2} \del \delbar |z|^2$ on ${U}$.
 Finally, ${U}$ contains a closed ball $B_r$ of radius $r > 0$ centred at $0$.

Assume first that $n > 1$.
By assumption, 
the form $\frac{i}{2} \del \delbar {f} \big( |z|^2 \big)$ is K\"ahler on  $\CC^n \smallsetminus \{0\}$.
By Lemma~\ref{l:pointwise kaehler}, this implies that $f'(t)$ and $t f''(t) + f'(t)$
are positive for all $t > 0$.
Hence, there exists a smooth function $h\colon \RR\to\RR$ such that $h(t)= t f'(t)$ in a neighbourhood of $0$, $h(t)=t$ for all $t>r$, and $h(t)$ and $h'(t)$ are positive for all $t>0$. 
Let $\widetilde{f}\colon \RR\to\RR$  be the smooth function with
$\widetilde{f}'(t)=h(t)/t$ and $\widetilde{f}(0) = f(0)$.  Then $\widetilde{f}(t) = f(t)$ near $0$, $\widetilde{f}'(t)=1$ for all $t>r$, and $\widetilde{f}'(t)$ and $t\widetilde{f}''(t)+\widetilde{f}'(t)$ are positive for all $t>0$. 
Hence, Lemma~\ref{l:pointwise kaehler} implies that the form
$\frac{i}{2} \del \delbar \widetilde{f} \big( |z|^2 \big)$ is K\"ahler on  $\CC^n \smallsetminus \{0\}$.
By Remark~\ref{r:pointwise kaehler}, a similar argument applies if $n = 1$.

Define $g\colon \CC^n\to\RR$ by $g(z):=|z|^2-\widetilde{f}(|z|^2)$, 
and let $\nu:=\frac{i}{4}(\delbar g-\del g)\in\Omega^1(\CC^n)^{\U(n)}.$
Define $\widetilde{\omega} := \omega - d \nu \in \Omega^2(U)^G$.
Since $\widetilde{f}'(t)=1$ for all $t>r$, the support of $\nu$ is contained in $B_r\subset U$.  
Since $d \nu = \frac{i}{2} \del \delbar g \in \Omega^{1,1}(\CC^n)^{\U(n)}$,
we have $$\widetilde{\omega}^{1,1}=
\frac{i}{2}\del\delbar \widetilde{f}\big(|z|^2\big), \quad
\widetilde{\omega}^{0,2} = \omega^{0,2}, \quad  \mbox{and} \quad \widetilde{\omega}^{2,0} = \omega^{2,0}.$$   
This shows that $\widetilde{\omega}^{1,1}$ is K\"ahler on $U \smallsetminus \{0\}$,
and proves Claim (2).
If  $J$ is the standard complex structure on $\CC^n$,  
then $$\omega^{2,0}(X,JX)=\omega^{0,2}(X,JX)=0$$ for all vectors $X\in TU$.
This proves Claim (1).
The last claim follows immediately.

\end{proof}

We are now ready to extend Proposition~\ref{p:blowup2-mfld} to 
the blow-up of a complex orbifold at an isolated $\Z_2$-singularity.

\begin{proposition}\label{p:blowup2}
Let $(M,J)$ be a complex orbifold with a holomorphic $\CC^\times$-action, a symplectic form $\omega\in\Omega^2(M)^{\SS^1}$ tamed by the action (everywhere) and tamed by $J$ on $W\subseteq M$, and a moment map $\Psi\colon M\to\RR$.  
Let $\big(\widehat{M},\widehat{J}\big)$ be the complex blow-up of $M$ at an isolated $\ZZ_2$-singularity $p\in M^{\SS^1}\cap W$.
 For sufficiently small $t>0$, there exist a symplectic form $\widehat{\omega}\in\Omega^2(\widehat{M})^{\SS^1}$ tamed by the action (everywhere) and tamed by $\widehat{J}$ on $q^{-1}(W)$, 
and a moment map $\widehat{\Psi}\colon \widehat{M}\to\RR$ such that $$[\widehat{\omega}]=q^*[\omega]-\frac{t}{2}\mathcal{E},$$ 
where $q:\widehat{M}\to M$  the blow-down map
and $\mathcal{E}$ is the Poincar\'e dual of the exceptional divisor $q^{-1}(p)$.  
Moreover, given a neighbourhood $V$ of $p$, we may assume that $\widehat{\omega}=q^*\omega$ and $\widehat{\Psi}=q^*\Psi$ on $\widehat{M} \smallsetminus q\inv(V)$.
\end{proposition}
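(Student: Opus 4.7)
The plan is to adapt the proof of Proposition~\ref{p:blowup2-mfld}, with the essential new difficulty being that the blow-down map $q$ fails to be smooth along the exceptional divisor $E$. Accordingly, the first step is to modify $\omega$ in a neighbourhood of $p$, using Lemma~\ref{l:modifyforms2}, so that the pullback $q^*\omega$ extends smoothly across $E$; only then can one add a small multiple of a correction form $\eta$ supported near $E$, as in the manifold case.

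Apply Lemma~\ref{l:bochner} to $\ZZ_2 \times \SS^1$ to identify a neighbourhood of $p$ with $U/\ZZ_2$, where $U \subset \CC^n$ is an $\SS^1$-invariant ball whose image lies in $W \cap V$, $\ZZ_2$ acts diagonally, and $\SS^1$ acts with integer weights. Lift $\omega$ to $U$ and apply Lemma~\ref{l:modifyforms2} with $G = \ZZ_2 \times \SS^1$ and $f(t)=t^2$; this $f$ satisfies the plurisubharmonicity hypothesis by Lemma~\ref{l:pointwise kaehler}, since $f'(t)=2t$ and $f'(t)+tf''(t)=4t$ are positive for $t>0$. The output is $\widetilde\omega=\omega-d\nu$ on $U$ with $\widetilde\omega^{1,1}=\frac{i}{2}\del\delbar(|z|^4)$ near $0$, and $\widetilde\omega^{2,0},\widetilde\omega^{0,2}$ constant near $0$, where $\nu\in\Omega^1(U)^G$ has compact support. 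Extending $\nu$ by zero produces $\omega_1:=\omega-d\nu\in\Omega^2(M)^{\SS^1}$, still tamed by the action, tamed by $J$ on $W$, cohomologous to $\omega$, and equal to $\omega$ outside a compact subset of $U/\ZZ_2$.

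The heart of the argument is that $q^*\omega_1$ extends smoothly across $E$. In the chart $\{z_n=1\}$ on the blow-up, $q$ sends $(z_1,\dots,z_{n-1},u)$ to the $\ZZ_2$-orbit of $\sqrt u\,(z_1,\dots,z_{n-1},1)$, so $q^*(|w|^4)=u\bar u\,(1+\sum_{i<n}|z_i|^2)^2$ is smooth, and hence so is $q^*\widetilde\omega^{1,1}=\frac{i}{2}\del\delbar$ of that expression. Likewise, each constant $(2,0)$-form $dw_i\wedge dw_j$ pulls back formally to $\tfrac{1}{2}(z_j\,dz_i-z_i\,dz_j)\wedge du+u\,dz_i\wedge dz_j$, a smooth form in the chart, and analogously for the $(0,2)$-part. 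Thus $q^*\omega_1$ extends to a smooth closed $\SS^1$-invariant $2$-form on $\widehat M$. Its restriction to $TE$ vanishes (every term on $E$ contains $du$ or $d\bar u$), but it remains nondegenerate in the normal direction spanned by $\del_u,\del_{\bar u}$.

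Finally, construct $\eta\in\Omega^{1,1}(\widehat M)^{\SS^1}$ with compact support in $q^{-1}(U/\ZZ_2)$, equal to $\pi^*\Omega$ near $E$ (with $\Omega$ the Fubini-Study form on $E\cong\CC\PP^{n-1}$) and equal to $q^*(\tfrac{i}{2\pi}\del\delbar(\rho(|w|^2)\ln|w|^2))$ elsewhere, exactly as in the proof of Proposition~\ref{p:blowup2-mfld}; the same local computation yields a moment-map correction $\Phi$ with $\xi_{\widehat M}\hook\eta=-d\Phi$. Because the normal bundle of $E$ is $\mathcal{O}(-2)$, one has $\mathcal E|_E=-2h$ while $[\eta]|_E$ is the hyperplane class $h$, so $[\eta]=-\tfrac12\mathcal E$. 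Setting $\widehat\omega:=q^*\omega_1+t\eta$ and $\widehat\Psi:=q^*\Psi+t\Phi$, the three-region argument from the previous proposition applies: near $E$, $\pi^*\Omega$ supplies positivity on $TE$ while $q^*\omega_1$ supplies it on the normal directions; outside $\supp\eta$, $\widehat\omega=q^*\omega$; on the compact intermediate region, openness of tameness handles small $t$. The main obstacle is the smoothness extension across $E$; Lemma~\ref{l:modifyforms2} has been crafted precisely to supply the pointwise normal form of $\widetilde\omega$ that makes the pullback smooth.
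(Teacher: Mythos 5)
Your proposal follows the paper's proof essentially step for step: apply Lemma~\ref{l:bochner} to get local coordinates near $p$, invoke Lemma~\ref{l:modifyforms2} with $f(t)=t^2$ so that $\omega$ near $p$ has $(1,1)$-part $\tfrac{i}{2}\del\delbar(|z|^4)$ and constant $(2,0),(0,2)$-parts, check that this makes the pullback under the non-smooth blow-down map extend smoothly across $E$, and then repeat the $\eta$-construction and three-region positivity argument from Proposition~\ref{p:blowup2-mfld}, with $[\eta]=-\tfrac12\mathcal{E}$ because the normal bundle of $E$ is $\mathcal{O}(-2)$. The only blemish is a notational slip at the end: $\widehat\Psi$ should be $q^*\Psi_1+t\Phi$, where $\Psi_1$ is the moment map for $\omega_1=\omega-d\nu$ (equal to $\Psi$ outside the modification region), rather than $q^*\Psi+t\Phi$; this is exactly the role of $\widetilde\Psi$ in the paper's write-up.
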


\begin{proof}
We may assume that $W$ is open.
By Lemma~\ref{l:bochner}, there exists an $\SS^1$-equivariant biholomorphism from an $\SS^1$-invariant neighbourhood of $[0] \in \CC^n/\ZZ_2$ to a neighbourhood $U\subseteq W\cap V$ of $p$, 
where $\Z_2$ acts diagonally on $\CC^n$ and $\SS^1$ acts  with weights 
$(\alpha_1,...,\alpha_n)$.
We identify these neighbourhoods, and also identify $q^{-1}(U)$ with a neighbourhood of the exceptional divisor $E$ in $\widehat{\CC^n/\ZZ_2}:=(\CC^n\smallsetminus\{0\})\times_{\CC^\times}\CC$. (See Definition~\ref{d:blow-up}.)
Define $\pi\colon \widehat{\CC^n/\ZZ_2}\to\CC\PP^{n-1}$ by 
$\pi([z_1,...,z_n;u]) =
[z_1,...,z_n]$.

The function $z \mapsto |z|^4$ is strictly plurisubharmonic on $\CC^n \smallsetminus \{0\}$.
Hence, by Lemma~\ref{l:modifyforms2} there exists a closed $\SS^1$-invariant form $\widetilde{\omega}\in[\omega]\in H^2(M)$ so that $\widetilde{\omega}=\omega$ on $M \smallsetminus U$; 
moreover $\widetilde{\omega}$ satisfies the following:
\begin{enumerate}
\item $\widetilde{\omega}$ is tamed on $U \smallsetminus \{p\}$; and
\item $\widetilde{\omega}^{1,1}=\frac{i}{2}\del\delbar(|z|^4)$,
and $\widetilde{\omega}^{2,0}$ and $\widetilde{\omega}^{0,2}$ are constant, on a neighbourhood of $p$.
\end{enumerate}
Let $\widetilde{\Psi}:M\to\RR$ be the smooth function
satisfying $\xi_M\hook\widetilde{\omega}=-d\widetilde{\Psi}$ so that $\widetilde{\Psi}=\Psi$ on $M \smallsetminus U$.
  Since the restriction $q\colon \widehat{M}\smallsetminus E\to M\smallsetminus\{p\}$ is $\CC^\times$-equivariant and biholomorphic, 
the form $q^*\widetilde{\omega}\in\Omega^2(\widehat{M}\smallsetminus E)^{\SS^1}$ is symplectic, tamed on $q^{-1}(W)\smallsetminus E$, and satisfies $q^*\widetilde{\omega}(\xi_{\widehat{M}},\widehat{J}(\xi_{\widehat{M}}))>0$ on $\widehat{M}\smallsetminus\big(\widehat{M}^{\SS^1}\cup E\big)$.
  A straightforward calculation in local coordinates shows that  (2) implies 
that there exists a unique closed form on $\widehat{M}$ that restricts to $q^*\widetilde{\omega}$ on $\widehat{M}\smallsetminus E$; 
by a slight abuse of notation, we will call this form $q^*\widetilde{\omega}\in\Omega^2(\widehat{M})^{\SS^1}$.  
Moreover, $$(q^*\widetilde{\omega})^{1,1}=\frac{i}{2}\del\delbar \big(|u|^2|z|^4 \big)$$ on a neighbourhood of $E$.
 Hence, another straightforward calculation  implies that for all $m\in E$ and $X\in T_m\widehat{M}$, $$q^*\widetilde{\omega}(X,\widehat{J}(X))\geq 0,$$ with equality impossible if $\pi_*X=0$ and $X \neq 0$. 
 Finally, since $q:\widehat{M}\smallsetminus E\to M\smallsetminus\{p\}$ is smooth and equivariant, $\xi_{\widehat{M}}\hook q^*\widetilde{\omega}=-dq^*\widetilde{\Psi}$ on $\widehat{M}\smallsetminus E$. 
 Since $q^*\widetilde{\Psi}$ is continuous and $\widehat{M}\smallsetminus E$ is dense in $\widehat{M}$, this implies that $\xi_{\widehat{M}}\hook q^*\widetilde{\omega}=-dq^*\widetilde{\Psi}$ on $\widehat{M}$.

The remainder of the proof is nearly identical to the proof of Proposition~\ref{p:blowup2-mfld}.  
The main distinction is that here, the Euler class to the normal bundle of $E$  in $\widehat{M}$ is twice the (negative) generator of $H^2(E;\ZZ)$. 
 Thus, if we construct $\eta$ as in the proof of Proposition~\ref{p:blowup2-mfld}, 
then $[\eta] =- \frac{1}{2} \mathcal{E}$.  Hence, 
$[\widetilde{\omega}+t\eta] =q^*[\omega]-\frac{t}{2}\mathcal{E}$.
\end{proof}

\begin{remark}\label{r:toricblowup}
Let $(M,\omega,\Phi)$  be a symplectic toric orbifold with moment
polytope $\Delta$, as described
in Remark~\ref{r:toricreduction}.
Let $\eta_1,\dots,\eta_n \in \Z^n$ be the primitive outward normals to the 
facets  that intersect at a vertex $v \in \Delta$, and assume the natural number
associated to each of these $n$ facets is $1$.
The preimage $\Phi\inv(v)$  is smooth exactly if
$\eta_1,\dots,\eta_n$ form a basis for $\Z^n$.
In contrast, it is an isolated $\Z_2$-singularity
 exactly if 
$\frac{1}{2}(\eta_1 + \dots + \eta_n) \in \Z^n$ and
$\eta_1,\dots,\eta_n$ generate a sublattice of $\Z^n$ of index $2$.
In the former case,
for sufficiently small  $t > 0$,
the moment polytope of the blow-up $(\widehat{M},\widehat{\omega})$
of $M$ at $\Phi^{-1}(v)$ 
with $[\widehat{\omega}] = q^*[\omega] - t \mathcal{E}$
is $$\widehat{\Delta} = \Delta \cap \left\{ x \in \R^n \; \left| \; \sum_{i=1}^n \langle \eta_i, x \rangle \leq \sum_{i = 1}^n 
\langle \eta_i, v \rangle - \frac{t}{2 \pi} \right\} \right. .$$
The same claim holds in the latter case
except now $[\widehat{\omega}] = q^*[\omega] - \frac{t}{2}\mathcal{E}$.
\end{remark}

\section{Adding Fixed Points to Tame Actions}\label{s:application}


We are now ready to build the specific machinery that the first author
needs to construct a non-Hamiltonian symplectic circle action with isolated
fixed points on a closed, connected symplectic manifold in \cite{T}.

\begin{proposition}\label{p:prop}
Let $(M,J)$ be a complex manifold with a holomorphic $\CC^\times$-action, a symplectic form $\omega\in\Omega^2(M)^{\SS^1}$tamed by the action (everywhere) and tamed by $J$ near $\Psi^{-1}(0)$, and a proper moment map $\Psi\colon M\to\RR$.  
Assume that the $\SS^1$-action on $\Psi^{-1}(0)$ is free except for $k$ orbits with stabilisers $\ZZ_2$. 
Then for  sufficiently small $\eps>0$ there exist a complex manifold $\big(\widetilde{M},\widetilde{J}\big)$ with a holomorphic $\CC^\times$-action, a symplectic form $\widetilde{\omega}\in\Omega^2\big(\widetilde{M}\big)^{\SS^1}$ 
tamed by the action, 
and a proper moment map $\widetilde{\Psi}\colon \widetilde{M}\to\RR$ so that the following hold:
\begin{enumerate}
\item $\widetilde{\Psi}^{-1}(-\eps,0]$ contains exactly $k$ 
fixed points; each lies in $\widetilde{\Psi}\inv(0)$ and
has weights $\{-2,1,\dots,1\}$.
\item There is an $\SS^1$-equivariant symplectomorphism from $\widetilde{\Psi}^{-1}(-\infty,-\eps/2)$ to $\Psi^{-1}(-\infty,-\eps/2)$ 
that induces a biholomorphism from $\widetilde{M}\red{s}\SS^1$ to $M\red{s}\SS^1$ for all regular $s \in (-\infty, - \eps/ 2)$.
\item $\widetilde{\omega}$ is  tamed on $\widetilde{\Psi}\inv(-\eps,\eps)$.
\end{enumerate}
\end{proposition}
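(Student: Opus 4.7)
The plan is to combine cutting with blow-up. After cutting $M$ just above level $0$, the $k$ exceptional $\SS^1$-orbits in $\Psi\inv(0)$ give rise to $k$ isolated $\Z_2$-singularities on the new fixed divisor of the cut; blowing up each of these with the orbifold blow-up of Proposition~\ref{p:blowup2} then produces the required $k$ isolated fixed points with weights $\{-2,1,\dots,1\}$.

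Concretely, choose a small $t>0$ and apply Proposition~\ref{p:cutting} to $(M,J,\omega,\Psi-t/(2\pi))$, which cuts $M$ from above at level $t/(2\pi)$ of the original $\Psi$. By Lemma~\ref{l:monotone}, the $\Z_2$-orbits in $\Psi\inv(0)$ propagate along their $\C^\times$-orbits, so for $t$ small enough, $t/(2\pi)$ is regular and $\Psi\inv(t/(2\pi))$ also contains exactly $k$ orbits with $\Z_2$-stabilizer. The cut orbifold $M_{\cut}$ thus carries exactly $k$ isolated $\Z_2$-singularities on its new fixed divisor and is otherwise smooth. Apply Proposition~\ref{p:blowup2} at each of these $k$ singularities with a common blow-up parameter $t$, with supports chosen in small $\C^\times$-invariant neighborhoods. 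The $J$-taming of $\omega$ near $\Psi\inv(0)$ transfers through Proposition~\ref{p:cutting}(4) to $J_{\cut}$-taming of $\omega_{\cut}$ near the $k$ singular points, so Proposition~\ref{p:blowup2} applies and yields $(\widetilde M,\widetilde J,\widetilde\omega,\widetilde\Psi)$.

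Claims (2) and (3) follow routinely. Outside the blow-up supports $\widetilde M$ is identified with $M_{\cut}$, and for $\eps$ and $t$ small the blow-up supports stay well above level $-\eps/2$, so Proposition~\ref{p:cutting}(3) supplies (2). For (3), $\omega$ is tamed by $J$ on an open neighborhood of $\Psi\inv(0)$ that contains $\Psi\inv[-\eps,\eps]$ for small $\eps$, and Propositions~\ref{p:cutting}(4) and~\ref{p:blowup2} preserve this taming on $\widetilde\Psi\inv(-\eps,\eps)$.

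The main technical work is Claim (1). Using the slice theorem for the proper $\C^\times$-action on $M$ near each $\Z_2$-orbit of $\Psi\inv(t/(2\pi))$ together with Proposition~\ref{p:cutting}(2), one identifies a local model of $M_{\cut}$ near each singular point with $\C^n/\Z_2$ (diagonal $\Z_2$-action), on whose universal cover the residual $\C^\times$-action has weights $(0,\dots,0,-1)$ (the $-1$ coming from the $\C$-factor introduced in the cut). A chart computation on the blow-up $(\C^n\smallsetminus\{0\})\times_{\C^\times}\C$ (working in the patch where the last homogeneous coordinate is nonzero) then produces a single isolated fixed point in the exceptional divisor with tangent weights $\{-2,1,\dots,1\}$; the remainder of the fixed set is the proper transform of the old fixed divisor. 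Tracking the moment map through the shift by $t/(2\pi)$, Lemma~\ref{l:modifyforms2} (which leaves the moment map unchanged at fixed points, since $\xi_M$ vanishes there), and the correction $t\Phi$ of Proposition~\ref{p:blowup2-mfld} (whose value at the new fixed point is $-t/(2\pi)$ by the explicit formula for $\Phi$ near $E$ combined with the weights above) places the $k$ new fixed points at $\widetilde\Psi=0$ while leaving the proper transform of the fixed divisor at $\widetilde\Psi=t/(2\pi)>0$, i.e., outside $\widetilde\Psi\inv(-\eps,0]$. The only other candidate fixed points come from $\SS^1$-fixed points of $M$ at moment-map level in $(-\eps,0]$, of which there are none for $\eps$ small by the hypothesis on $\Psi\inv(0)$ and continuity. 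The main obstacle is coordinating the cut level with the blow-up parameter so that the new isolated fixed points sit precisely at level $0$ while $\widetilde\Psi=\Psi$ on the unmodified region.
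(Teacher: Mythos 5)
Your proposal takes a genuinely different route from the paper's. The paper cuts at level $0$ (so the exceptional fixed divisor $F=M\red{0}\SS^1$ sits at $\Psi_{\cut}=0$) and then, in order to make the new fixed points land at level $0$ after blow-up, it perturbs $\omega_{\cut}$ and $\Psi_{\cut}$ by a term $t'\beta$, $t'\chi$ supported in $\Psi_{\cut}^{-1}(-\eps/2,0]$, where $\beta$ is a carefully built $(1,1)$-form (essentially a Chern form of the line bundle $U_0\times_{\C^\times}\C$, vanishing near each singular point thanks to Lemma~\ref{l:modifyforms}) and $\chi$ is the associated shift of the moment map; this pushes $F$ up to level $t'$, and then blowing up with $t_j=2\pi t'$ brings the new fixed point back down to $0$. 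You instead cut at the slightly positive level $t/(2\pi)$ and blow up with the same parameter $t$, so the new fixed point lands at $t/(2\pi)-t/(2\pi)=0$ directly, with no $\beta$-perturbation at all. The arithmetic you do (cut level $t/(2\pi)$, $\Phi(\widetilde p_j)=-1/(2\pi)$, so $\widetilde\Psi(\widetilde p_j)=0$) is correct and matches the paper's count $\widetilde\Psi(\widetilde F)-\widetilde\Psi(\widetilde p_j)=t_j/(2\pi)$, and Claims (2) and (3) do go through as you say, because the cut level and the blow-up support both live in $\Psi^{-1}(0,t/(2\pi)]$, which is disjoint from $\Psi^{-1}(-\infty,-\eps/2)$.

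There are, however, two places where your argument is thinner than the paper's, and the second one is precisely what the $\beta$-detour in the paper is designed to avoid. First, ``the $\Z_2$-orbits propagate along their $\C^\times$-orbits, so $\Psi^{-1}(t/(2\pi))$ contains exactly $k$ such orbits'' gives you at least $k$, but not yet exactly $k$ with no larger isotropy; you need the slice normal form near each $\Z_2$-orbit of $\Psi^{-1}(0)$ (the $\Z_2$-isotropy locus is the single $\C^\times$-orbit $\C^\times\times_{\Z_2}\{0\}$, by the freeness hypothesis on $\Psi^{-1}(0)$ plus properness) to rule out extra strata. Second, and more importantly, your construction is self-referential: the cut space $M_{\cut}=M_{\cut}(t)$ depends on $t$, and Proposition~\ref{p:blowup2} only guarantees the blow-up works ``for sufficiently small $t$'', where the threshold depends on $M_{\cut}(t)$ itself (via the size of the Bochner chart, the compact transition region, and the taming constants for $\omega_{\cut}(t)$ there). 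You would need a uniformity argument showing this threshold is bounded below for all small $t$ before you can coordinate the cut level with the blow-up parameter; this is not automatic, and the paper sidesteps it entirely by fixing $M_{\cut}$ once (cut at $0$) and then making both free parameters $t'$ and $t_j=2\pi t'$ small independently of the cut. So: your route is cleaner in that it dispenses with $\beta$, $\chi$, and Lemma~\ref{l:modifyforms}, but it introduces a circularity that must be resolved by an estimate the paper never needs.
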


\begin{proof}
Let $(M_{\cut},J_{\cut})$ be the complex orbifold with holomorphic $\CC^\times$-action, symplectic form $\omega_{\cut}\in\Omega^2(M_{\cut})^{\SS^1}$ 
tamed by the action, and proper moment map $\Psi_{\cut}\colon M_{\cut}\to\RR$ described in Proposition~\ref{p:cutting}.  
Then $\Psi_{\cut}(M_{\cut})\subseteq(-\infty,0]$.
Additionally,  a neighbourhood of the fixed component $F := \Psi_{\cut}\inv(0)$ in
$M_{\cut}$ is $\C^\times$-equivariantly biholomorphic to the complex line bundle 
$U_0\times_{\CC^\times}\CC \stackrel{\pi}{\to} U_0/\C^\times$, 
where $U_0 := \CC^\times \cdot \Psi^{-1}(0)$, $\C^\times$ acts
diagonally on $U_0 \times \C$, and $\C^\times$ acts on $U_0 \times_{\CC^\times} \CC$ by 
$\lambda \cdot[ y,z] = [\lambda \cdot y,z] = [y, \lambda^{-1} z]$ for all $ \lambda \in \CC^\times$ and $[y,z] \in U_0 \times_{\CC^\times} \CC$;
we identify these manifolds.
In particular, 
$F$ is diffeomorphic to $M\red{0}\SS^1 = U_0/\CC^\times$. 
Moreover, there exists an $\SS^1$-equivariant symplectomorphism from $\Psi^{-1}(-\infty,0)$ to $\Psi_{\cut}^{-1}(-\infty,0)$ that intertwines the moment maps and induces a biholomorphism between the reduced spaces at all regular $s<0$.  
Hence, the orbifold $M_{\cut}$ is smooth except at isolated $\ZZ_2$-singularities $p_1,...,p_k\in F$. 
Finally, $J_{\cut}$ tames $\omega_{\cut}$ on a  neighbourhood 
$W \subset U_0 \times_{\CC^\times} \CC$ of $F$.

There exists $\eps > 0$ so that $\Psi_{\cut}\inv(-\eps,0] \subseteq W$
and $\Psi_{\cut}\inv(-\eps,0)$ contains no fixed points.
Since $\Psi_{\cut}(M_{\cut})\subseteq(-\infty,0]$, 
we may also assume that $\Psi_{\cut}^{-1}(-\infty,-\eps/2)\neq\emptyset$. 

Let $Q \colon U_0 \times_{\CC^\times} \CC \to \R$ be the quadratic form associated to a Hermitian metric on 
the line bundle $U_0\times_{\CC^\times}\CC$,
and let $\rho \colon \R \to \R$ be a smooth function  which is
$1$ on a neighbourhood of $0$ and such that the support of $x \mapsto \rho(Q(x) )$
is contained in $\Psi_{\cut}^{-1}(-\eps/2,0]$.
Define $h \colon U_0 \times_{\CC^\times} \left(\CC \smallsetminus \{0\} \right) \to \R$ by $h(x):=\rho(Q(x)) \ln Q(x).$ 
Since the closed real form $i \del \delbar \ln Q \in \Omega^{1,1}(U_0 \times_{\CC^\times} \left(\CC \smallsetminus \{0\} \right) )$ is basic,
there exists a closed real form $\Omega \in \Omega^{1,1} (U_0/\CC^\times)$ 
such that $\pi^*(\Omega) = i \del \delbar \ln Q$.
Therefore, there exists a closed real form
$\beta \in \Omega^{1,1}(M_{\cut})$
with $\supp (\beta) \subset \Psi_{\cut}\inv(-\eps/2,0]$ 
that is equal to $i \del \delbar h$ on 
 $U_0 \times_{\CC^\times} (\CC \smallsetminus \{0\})$, and equal
to $\pi^*(\Omega)$ near the zero section $F = U_0/\CC^\times$.
Similarly, there exists a smooth function
$\chi \colon M_{\cut}  \to \R$   
with $\supp (\chi) \subset \Psi_{\cut}\inv(-\eps/2,0]$ 
that is equal to
$i\xi_{M_{\cut}} \hook \left(\delbar h - \del h\right)/2$
on 
$U_0 \times_{\CC^\times} (\CC \smallsetminus \{0\})$, and equal
to $1$  near the zero section $F = U_0/\CC^\times$.
Then $\xi_{M_{\cut}} \hook \beta  = - d \chi$ by a straightforward calculation.
Moreover, by Lemma~\ref{l:modifyforms} below, we may assume that $\beta$ 
vanishes (and so $\chi$ is $1$) on pairwise disjoint neighbourhoods $V_j \subset \Psi\inv_{\cut}(-\eps/2,0]$ of  $p_j$ for all $j$.
 Since the support of $\beta$ is compact, there exists $\delta>0$ such that 
for all $t' \in (-\delta, \delta)$ the form
$\omega_{\cut}+t'\beta$ is symplectic, tamed by the action (everywhere), and tamed on $W$. 

Let $\big(\widetilde{M},\widetilde{J}\big)$ be the complex blow-up of 
$M_{\cut}$ at $p_1,...,p_k$, 
let $q\colon \widetilde{M}\to M_{\cut}$ be the blow-down map, 
and let $\mathcal{E}_j$ be the Poincar\'e dual to the exceptional divisor 
$E_j:=q^{-1}(p_j)$ for all  $j$.  
By Proposition~\ref{p:blowup2}, for each $j$ and for 
sufficiently small $t_j>0$ there exist a symplectic form 
$\widetilde{\omega}_j\in\Omega^2\big(\widetilde{M}\smallsetminus\bigcup_{l\neq j}E_l\big)^{\SS^1}$ 
that is tamed by the action (everywhere) and tamed on $q^{-1}(W)$,
 and a moment map $\widetilde{\Psi}_j:\widetilde{M}\smallsetminus\bigcup_{l\neq j}E_l\to\RR$ such that 
\begin{equation}\label{omegaj}
[\widetilde{\omega}_j]=q^*[\omega_{\cut}]-\frac{t_j}{2}\mathcal{E}_j\in H^2\Big(\widetilde{M}\smallsetminus\bigcup_{l\neq j}E_l\Big). 
\end{equation}
Moreover, $\widetilde{\omega}_j=q^*\omega_{\cut} \text{ and } \widetilde{\Psi}_j=q^*\Psi_{\cut}$ on the complement of a closed subset of $q\inv(V_j)$.
Given $t'\in(-\delta,\delta)$, we can glue these forms together to construct a symplectic 
form $\widetilde{\omega}\in\Omega^2\big(\widetilde{M}\big)^{\SS^1}$ and moment map $\widetilde{\Psi}:\widetilde{M}\to\RR$ 
that are equal to $q^*(\omega_{\cut}+t'\beta)$ and $q^*(\Psi_{\cut}+t'\chi)$, 
respectively, on $\widetilde{M}\smallsetminus\bigcup_jq^{-1}(V_j)$ 
and equal to $\widetilde{\omega}_j$ and 
$\widetilde{\Psi}_j+t'$ on $q^{-1}(V_j)$ for each $j$.
By construction,  
$\widetilde{\omega}$ is tamed by the action (everywhere) and tamed
by $J$ on $W$.
Moreover, $\widetilde{\omega} = q^*\omega_{\cut}$  and
$\widetilde{\Psi} = q^*\Psi_{\cut}$ on $\widetilde{M} \smallsetminus (q^*\Psi_{\cut})\inv(-\eps/2,0]$.
Since $q^*$ and $\Psi_{\cut}$ are proper, and since $[-\eps/2,0] \supset (-\eps/2,0]$ is compact,
this implies that the moment map  $\widetilde{\Psi}$ is proper. 

By Lemma~\ref{l:bochner}, for each $j \in \{1,\dots,k\}$ 
there exists an $\SS^1$-equivariant biholomorphism from
an $\SS^1$-invariant neighbourhood of $[0] \in \CC^n/\Z_2$ to a neighbourhood
of $p_j \in M$, where $\Z_2$ acts diagonally on $\CC^n$ and $\SS^1$ acts
with weights $(-1,0,\ldots,0)$.
In the coordinates described in Definition~\ref{d:blow-up},
the corresponding $\SS^1$-action on the blow-up $\widehat{\CC^n/\Z_2}$ is given by
$$ \lambda \cdot [z_1,\dots,z_n;u] = [\lambda\inv z_1, z_2, \dots, z_n;u]
= [z_1,\lambda z_2, \dots, \lambda z_n; \lambda^{-2} u]$$
for all $\lambda \in \SS^1$ and $[z_1,\dots,z_n;u] \in \widehat{\CC^n/\Z_2}$.
In particular, $[z_1,\dots,z_n;u] \in \widehat{\CC^n/\Z_2}$ is fixed exactly if either
$z_2 = \dots = z_n = u = 0$ or $z_1 = 0$.
Therefore, $q\inv(F) \cap M^{\SS^1}$ consists of $k$ isolated fixed points 
$\{\widetilde{p_j} \in E_j\}_{j=1}^k$,
each with  weights $\{-2,1,\dots,1\}$, and a component $\widetilde{F}$
that is the blow-up of $F$ at $p_1,\dots,p_k$.
By \eqref{omegaj},  the restriction of $[\widetilde{\omega}]$ to $E_j$ is $t_j$ times the positive generator $[\Omega] \in H^2(\CC\PP^{n-1};\ZZ)$; hence,
$$\widetilde{\Psi}\big(\widetilde{F}\big)=\widetilde{\Psi}\big(E_j\cap\widetilde{F}\big)=\widetilde{\Psi}(\widetilde{p}_j)+\frac{t_j}{2 \pi}.$$
(Compare with \eqref{e:coord-Phi}.)
Additionally,
$\widetilde{\Psi}(\widetilde{F})=\Psi_{\cut}(F)+t'\chi(F)=t'$. 
We can  fix $t'\in(0,\delta)$ so that  $t_j = 2 \pi t'$ is sufficiently small 
for all $j$.
Then $\widetilde{\Psi}(\widetilde{p}_j)=0$ for all  $j$ 
and $\widetilde{\Psi}(\widetilde{F})>0$. 
Moreover, if $F' \subset \widetilde{M}$ is any other fixed component,
then $q^*\Psi_{\cut}(F') \leq -\eps$ and so 
 $\widetilde{\Psi}(F')=q^*\Psi_{\cut}(F') \leq -\eps$.
 This proves Claim (1).  

Since
$\widetilde{\omega} = q^* \omega_{\cut}$ and
$\widetilde{\Psi}=q^*\Psi_{\cut}$ on  
$\widetilde{M} \smallsetminus (q^*\Psi_{\cut})^{-1}(-\eps/2,0]$,
the blow-down map $q$ restricts to
an $\SS^1$-equivariant biholomorphic symplectomorphism  from
$(q^*\Psi_{\cut})^{-1}(-\infty,-\eps/2)$ to
$\Psi_{\cut}^{-1}(-\infty,-\eps/2)$ that intertwines  $\widetilde{\Psi}$
and $\Psi_{\cut}$. 
Moreover, 
\begin{equation}\label{disjoint}
\widetilde{\Psi}^{-1}(-\infty,-\eps/2)=
(q^*\Psi_{\cut})^{-1}(-\infty,-\eps/2)
\amalg
\big( \widetilde{\Psi}^{-1}(-\infty,-\eps/2)\cap(q^*\Psi_{\cut})^{-1}(-\eps/2,\infty)\big).
\end{equation}
Since $\widetilde{\Psi}$ is a proper moment map,
the preimage $\widetilde{\Psi}^{-1}(-\infty,-\eps/2)$ is connected.
Additionally, we chose $\eps$ so that $(q^*\Psi_{\cut})\inv(-\infty,-\eps/2) \neq \emptyset$.
Hence, \eqref{disjoint} implies that 
$$\widetilde{\Psi}^{-1}(-\infty,-\eps/2)=(q^*\Psi_{\cut})^{-1}(-\infty,-\eps/2) .$$
Together with the first paragraph, this proves Claim (2).

Finally, since 
 $\widetilde{\Psi}=q^*\Psi_{\cut}$  on 
$\widetilde{M} \smallsetminus (q^*\Psi_{\cut})\inv(-\eps/2,0]$, we have  inclusions 
$$\widetilde{\Psi}\inv(-\eps,\infty) \subset
(q^* \Psi_{\cut})\inv(-\eps,\infty) = (q^*\Psi_{\cut})\inv(-\eps,0] \subset q\inv(W).$$
Since $\widetilde{\omega}$ is tamed on $q\inv(W)$,
this proves Claim (3).
\end{proof}


\begin{remark}\label{r:toricfixed}
Let the circle  $\SS^1 \times \{1\}^{n-1} \subset (\SS^1)^n$
act on a symplectic toric manifold $(M,\omega,\Phi)$ with moment polytope
$\Delta = \Phi(M)$,
as described in Remark~\ref{r:toricreduction}, satisfying the assumptions of Proposition~\ref{p:prop}.
If $M$ is  cut  at sufficiently small $\eps > 0$,
then each of the
$k$ new fixed points in $M_{\cut}$ with
orbifold isotropy $\Z_2$  corresponds to a vertex
on the facet $\Delta \cap \big(\{\eps\} \times \R^{n-1}\big) \subset \Delta_{\cut}$;
(see Remark~\ref{r:toriccutting}).
If $M_{\cut}$  is  blown up at these fixed points by $2 \pi \eps$,
the resulting moment polytope $\widetilde{\Delta}$
agrees with $\Delta$ on $\{x \in \R^n \mid x_1 < 0 \}$,
has $k$ vertices in $\{0\} \times \R^{n-1}$, and a new facet $\widetilde{\Delta} \cap \big(\{\eps\} \times \R^{n-1}\big)$
that is the blow-up of $\Delta \cap \big(\{\eps\} \times \R^{n-1} \big)$  at  $k$ vertices; (see Remark~\ref{r:toricblowup}).
\end{remark}

\begin{remark}\label{r:reverse 2}
By reversing the action, as described in Remark~\ref{r:reverse action}, 
we see that 
Proposition~\ref{p:prop} still holds with the following modifications: 
Replace $(-\eps,0]$ by $[0,\eps)$ and $\{-2,1,\dots,1\}$ by $\{2,-1,\dots,-1\}$ in Claim (1); and replace each $(-\infty, -\eps/2)$ by $(\eps/2,\infty)$ in Claim (2).
\end{remark}

We conclude with the lemma that we used in the proof of Proposition~\ref{p:prop}.

\begin{lemma}\label{l:modifyforms}
Let $G$ be a closed subgroup of $\U(n)$.  
Let $U$ be a $G$-invariant neighbourhood of $0 \in \CC^n$, 
and let $\beta\in\Omega^2(U)^G$ be closed.  
Then there exists $\nu\in\Omega^1(U)^G$ with compact support such 
that $\widetilde{\beta}:=\beta-d\nu$ vanishes in a neighbourhood of $0$.
Finally, if $\beta\in\Omega^{1,1}(U)^G$, then we may also choose $\nu$ so that
 $\widetilde{\beta} \in \Omega^{1,1}(U)^G$.
\end{lemma}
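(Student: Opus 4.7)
The plan is to cut off a Poincar\'e primitive by a $G$-invariant bump function. First, I would shrink $U$ to a $G$-invariant open ball $B \subseteq U$ centred at $0$; since $B$ is star-shaped, the standard Poincar\'e Lemma produces $\mu \in \Omega^1(B)$ with $d\mu = \beta$. Averaging $\mu$ over $G$ via Haar measure gives a $G$-invariant primitive, still denoted $\mu$, because $\beta$ is $G$-invariant and $G$ acts linearly on $\CC^n$ (so pullback by $g$ commutes with $d$). Next, I would pick a $G$-invariant smooth function $\rho\colon\CC^n\to[0,1]$ with $\rho\equiv 1$ near $0$ and $\supp(\rho)\subset B$ (obtained by averaging any such bump function over $G$) and set $\nu:=\rho\,\mu$, extended by zero outside $B$. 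Then $\nu\in\Omega^1(U)^G$ has compact support, and since $d\nu = d\rho\wedge\mu + \rho\,\beta$ and $d\rho$ vanishes where $\rho\equiv 1$, the form $\widetilde\beta=\beta-d\nu$ vanishes on a neighbourhood of $0$.

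For the final claim, I would start with $\beta\in\Omega^{1,1}(U)^G$ and first reduce to the real case by splitting into real and imaginary parts. For a real closed $(1,1)$-form, the local $\del\delbar$-Lemma produces a smooth real potential $h$ with $\beta=\tfrac{i}{2}\del\delbar h$ on the ball $B$; after averaging $h$ over $G$ (as in the proof of Lemma~\ref{l:modifyforms1b}, using that $G\subset\U(n)$ acts holomorphically and hence commutes with $\del$ and $\delbar$), I may assume $h$ is $G$-invariant. With $\rho$ as before, I would set
\[ \nu := \tfrac{i}{4}\bigl(\delbar(\rho h)-\del(\rho h)\bigr), \]
extended by zero. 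This $\nu$ is a compactly supported real $G$-invariant $1$-form, and using $d\delbar=\del\delbar$ and $d\del=-\del\delbar$ I would compute $d\nu=\tfrac{i}{2}\del\delbar(\rho h)\in\Omega^{1,1}(U)^G$. Hence $\widetilde\beta=\beta-d\nu\in\Omega^{1,1}(U)^G$, and where $\rho\equiv 1$ it reduces to $\beta-\tfrac{i}{2}\del\delbar h=0$, as desired.

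There is no genuine obstacle here, as this is a classical cutoff-of-a-Poincar\'e-primitive argument. The only points requiring care are (i) preserving $G$-invariance at each stage, handled uniformly by averaging over the compact linear group $G\subset\U(n)$, and (ii) in the $(1,1)$-case, applying the cutoff to the potential $h$ before differentiating, so that $d\nu$ automatically has type $(1,1)$ and the correction stays in $\Omega^{1,1}$.
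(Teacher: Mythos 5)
Your proof is correct and follows essentially the same strategy as the paper: produce a $G$-invariant primitive $\mu$ (resp.\ potential $h$) by the Poincar\'e Lemma (resp.\ $\del\delbar$-Lemma) plus averaging, then cut off via a $G$-invariant bump function $\rho$, taking $\nu = \rho\mu$ in the general case and $\nu = \tfrac{i}{4}\bigl(\delbar(\rho h)-\del(\rho h)\bigr)$ in the $(1,1)$ case. The extra observations you supply (extending by zero, the type computation for $d\nu$) are fine but do not constitute a departure from the paper's argument.
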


\begin{proof}
By the Poincar\'e Lemma (and averaging), after possibly shrinking $U$ there exists $\mu \in \Omega^1(U)^G$ such that $d \mu = \beta.$  
Pick a $G$-invariant smooth function $\rho \colon U \to [0,1]$ with compact support that is $1$ on a neighbourhood of $0$. 
Let $ \nu= \rho\mu\in\Omega^1(U)^G$.
If $\beta\in\Omega^{1,1}(U)$, then by the Poincar\'e Lemma for $d$ and $\del$, there exists a smooth $G$-invariant potential
function $h\colon U\to\RR$ such that $\beta=\frac{i}{2}\del\delbar{h}$.  
Pick $\rho$ as before, and let $\nu := \frac{i}{4}\big(\delbar(\rho h)-\del(\rho h)\big)$.
\end{proof}

\section{Tame Birational Equivalence}\label{s:birational equiv}





In this section we study how, in our setting,  the  reduced space  
changes as we vary the value at which we reduce.
Our first goal is to prove the proposition below, which shows
that the birational equivalence theorem of Guillemin and Sternberg
also holds for tamed symplectic forms \cite{GS89} (if the fixed points are  isolated); 
see also \cite{DoHu} for the K\"ahler case.
We then use the fact that most of our tools work for arbitrary weights to prove Proposition~\ref{p:with crit pts orbifold}, which implies the tame analogue of (a special case of) a theorem of Godinho \cite{Go}.
This will allow us to analyse manifolds with fixed points that we construct
using Proposition~\ref{p:prop}.

\begin{proposition}\label{p:with crit pts}
Let $(M,J)$ be a complex manifold such that $\dim_\CC(M) > 1$
with a holomorphic $\CC^\times$-action, a symplectic form $\omega\in\Omega^2(M)^{\SS^1}$ tamed by the action, and a proper moment map $\Psi\colon M\to\RR$.  
Fix $a_-<0<a_+$ so that $\Psi^{-1}(a_-,a_+)$ contains exactly $k$ fixed points;
each lies in $\Psi^{-1}(0)$ and has weights $\{-1,1,\dots,1\}$.  Then there exists a complex orbifold $(X,I)$ and $\kappa,\eta\in H^2(X;\RR)$ so that:
\begin{itemize}
\item for all $s\in(a_-,0)$, the reduced space $M\red{s}\SS^1$ is biholomorphically symplectomorphic to $(X,I,\sigma_s)$,
 where $\sigma_s\in\Omega^2(X)$ satisfies $[\sigma_s]=\kappa- 2 \pi s \, \eta$; and
\item for all $s\in(0,a_+)$, the reduced space $M\red{s}\SS^1$ is biholomorphically symplectomorphic to $(\widehat{X},\widehat{I},\widehat{\sigma}_s)$, 
where $\widehat{\sigma}_s\in\Omega^2(\widehat{X})$ satisfies 
$$[\widehat{\sigma}_s]=q^*\kappa-2 \pi s \, q^*\eta- 2 \pi s\sum_{j=1}^k\mathcal{E}_j.$$
Here, $\widehat{X}$ is the blow-up of $X$ at smooth points $x_1,\dots,x_k\in X$, the map $q \colon \widehat{X}\to X$ is the blow-down map, and  $\mathcal{E}_j$ is the Poincar\'e dual of the exceptional divisor $q^{-1}(x_j)$ for all $j$.
\end{itemize}
Moreover, under the identifications above, 
the Euler class of the holomorphic principal $\C^\times$-bundle 
$\CC^\times \cdot \Psi\inv(s) \to M \red{s}\SS^1$ is $\eta$
for all $s \in (a_-,0)$,
and $q^* \eta + \sum \mathcal{E}_j$ for all $s \in (0,a_+)$.
\end{proposition}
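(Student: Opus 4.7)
The plan is to handle the two intervals $(a_-,0)$ and $(0,a_+)$ separately: first identify all reduced spaces in each with a single complex orbifold, then use the local normal form at the fixed points to show that one is the blow-up of the other, and finally obtain the cohomology class formulas via a Duistermaat--Heckman argument together with a local computation on the exceptional divisors.

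First, I would show that $U_s := \CC^\times \cdot \Psi^{-1}(s)$ is independent of $s$ on each interval.  Given $s, s' \in (a_-,0)$ and $x$ with $\Psi(x) = s$, Lemma~\ref{l:monotone} says $\gamma_x(t) := \Psi(e^t \cdot x)$ is strictly increasing with image an open interval $(m_-,m_+) \ni s$.  If $s' \geq m_+$ then the forward orbit $\{e^t \cdot x : t \geq 0\}$ lies in the compact set $\Psi^{-1}([s,m_+]) \subset \Psi^{-1}(a_-,0)$; since $\dot\gamma_x = \omega(\xi_M, J(\xi_M))$ is positive and integrable on $[0,\infty)$, along a sequence $t_n \to \infty$ one would find an accumulation point $y$ with $\omega(\xi_M, J(\xi_M))|_y = 0$, forcing $\xi_M|_y = 0$ by the taming hypothesis --- contradicting the absence of fixed points in $\Psi^{-1}(a_-,0)$.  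A symmetric argument handles the other end, so $U_s = U$ is constant for $s \in (a_-,0)$; likewise $U_s = U'$ for $s \in (0,a_+)$.  Proposition~\ref{p:reduction} then produces complex orbifolds $(X,I) := U/\CC^\times$ and $(\widehat{X}, \widehat{I}) := U'/\CC^\times$ with the desired biholomorphic identifications of the reduced spaces.

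Next I would apply Proposition~\ref{p:local normal form} at each $p_j$ to obtain a $\CC^\times$-equivariant biholomorphism from a neighborhood of $p_j$ to a neighborhood of $0 \in \CC^n$ with weights $(-1,1,\dots,1)$.  In coordinates $(z_0,z) \in \CC \times \CC^{n-1}$, the orbit through $(z_0,z)$ meets a negative level iff $z_0 \neq 0$ and a positive level iff $z \neq 0$.  Thus the local piece of $U$ is $\{z_0 \neq 0\}$, whose $\CC^\times$-quotient is $\CC^{n-1}$ (the orbit through $(z_0,0)$ collapsing to a single smooth point $x_j$), while the local piece of $U'$ is $\{z \neq 0\}$, whose $\CC^\times$-quotient is the blow-up $\widehat{\CC^{n-1}}$ at the origin with exceptional divisor $\CC\PP^{n-2}$.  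The $\CC^\times$-invariant map $(z_0,z) \mapsto z_0 z$ descends to the standard blow-down.  Since outside the $p_j$ one has $U = U'$ (every non-fixed orbit meeting $\Psi^{-1}(a_-,a_+)$ crosses both signs of $\Psi$, by the previous paragraph), these local pictures glue to a holomorphic blow-down $q \colon \widehat{X} \to X$ collapsing exceptional divisors $E_j := q^{-1}(x_j) \cong \CC\PP^{n-2}$ to smooth points $x_j \in X$.

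For the cohomology, Duistermaat--Heckman on each interval of regular values (the standard argument by parallel transport along the $\RR$-flow generated by $-J(\xi_M)/\omega(\xi_M,J(\xi_M))$ works verbatim in our tame setting) gives $[\sigma_s] = \kappa - 2\pi s\,\eta$ on $X$ and $[\widehat{\sigma}_s] = \widehat{\kappa} - 2\pi s\,\widehat{\eta}$ on $\widehat{X}$, where $\eta$ and $\widehat{\eta}$ are the Euler classes of the respective $\CC^\times$-bundles.  Using the blow-up decomposition $H^2(\widehat{X};\RR) \cong q^* H^2(X;\RR) \oplus \bigoplus_j \RR\,\mathcal{E}_j$: on $\widehat{X} \setminus \bigcup_j E_j \cong X \setminus \{x_j\}$ the bundles $U$ and $U'$ coincide and the DH formula extends continuously across $s=0$ on the free $\SS^1$-stratum (the $\SS^1$-action is free on $\Psi^{-1}(a_-,a_+) \setminus \{p_j\}$), so $\widehat{\eta}$ and $\widehat{\kappa}$ restrict to $\eta$ and $\kappa$ there; on each $E_j \cong \CC\PP^{n-2}$, the local model identifies $U' \to E_j$ with the frame bundle of $\mathcal{O}(-1)$, giving $\widehat{\eta}|_{E_j} = -h = \mathcal{E}_j|_{E_j}$, while the reduced form $\widehat{\sigma}_s|_{E_j}$ equals $s$ times the integral Fubini--Study form (class $2\pi h$), so $\widehat{\kappa}|_{E_j} = 2\pi s\, h + 2\pi s(-h) = 0 = q^*\kappa|_{E_j}$.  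These restrictions force $\widehat{\eta} = q^*\eta + \sum_j \mathcal{E}_j$ and $\widehat{\kappa} = q^*\kappa$.  The main obstacle is the first step: showing that $U_s$ is constant in $s$ requires combining properness of $\Psi$ carefully with Lemma~\ref{l:monotone} and the taming hypothesis to prevent escape of orbits to intermediate fixed points; the remaining steps are a routine gluing of local blow-up models together with a standard Mayer--Vietoris and Duistermaat--Heckman cohomology computation, with only modest care needed to verify DH across the critical level on the free $\SS^1$-stratum.
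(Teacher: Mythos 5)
Your proposal follows the paper closely for the geometric half — proving that $U_s := \CC^\times\cdot\Psi\inv(s)$ is constant on each component of $(a_-,0)\cup(0,a_+)$ via the monotonicity/properness argument (this is the content of Lemma~\ref{l:limits} and Lemma~\ref{l:lnfplus}), then applying Propositions~\ref{p:reduction} and~\ref{p:local normal form} to identify $U_\pm/\CC^\times$ as $X$ and $\widehat X$ and to read off the blow-down from the linear model. Where you diverge genuinely is the cohomological half. The paper builds a single class $\overline\kappa$ on $\Psi\inv(a_-,a_+)/\SS^1$ (Lemma~\ref{l:restriction}), transports it to both $U_\pm/\CC^\times$ via the continuous retractions $h_\pm$ of Lemma~\ref{l:cont fct}/Corollary~\ref{c:cont fct}, and then proves $\eta_+ = q^*\eta_- + \sum\mathcal{E}_j$ by exhibiting an explicit holomorphic section of a glued $\CC$-bundle transverse to the zero set. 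You instead invoke the blow-up decomposition $H^2(\widehat X;\RR)\cong q^*H^2(X;\RR)\oplus\bigoplus_j\RR\,\mathcal{E}_j$ and pin down $\widehat\eta,\widehat\kappa$ by restricting to $\widehat X\smallsetminus\bigcup E_j$ and to each $E_j$. This is a legitimate alternative, but it has hidden costs the paper's route avoids: (i) you need injectivity of $H^2(X;\RR)\to H^2(X\smallsetminus\{x_j\};\RR)$ to read off the $q^*$-component, which holds when $\dim_\CC M\ge 3$ but degenerates for $\dim_\CC M=2$ (there $X$ is a curve and the blow-up is trivial, so a short separate argument is required); and (ii) the claim that ``the DH formula extends continuously across $s=0$ on the free stratum'' is exactly the nontrivial ingredient the paper packages as Lemmas~\ref{l:restriction} and~\ref{l:cont fct} --- in particular the continuity of the limit map $G$ through the fixed points uses Corollary~\ref{delta}, so this step is not as routine as your last sentence suggests. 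Two small inaccuracies to repair but not fatal: ``outside the $p_j$ one has $U=U'$'' should read ``outside the stable and unstable sets of the $p_j$'' (these are the preimages of $\{x_j\}$ and $\bigcup E_j$, not just of the fixed points themselves), and the $\SS^1$-action on $\Psi\inv(a_-,a_+)\smallsetminus\{p_j\}$ is only \emph{locally} free in general, which is all the argument needs.
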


This statement is particularly nice when there are no fixed points in $\Psi\inv(a_-,a_+)$.
(Note that the $\dim_\CC(M) = 1$ case is trivial.)

\begin{corollary}\label{c:locallyfree}
Let $(M,J)$ be a complex manifold with a holomorphic $\CC^\times$-action, a symplectic form $\omega\in\Omega^2(M)^{\SS^1}$ tamed by the action, and a proper moment map $\Psi\colon M\to\RR$.  
If $\Psi\inv(a_-,a_+)$ contains no fixed points, then
there exists a complex orbifold $(X,I)$ and $\kappa,\eta\in H^2(X)$, so that
the reduced space $M\red{s}\SS^1$ is biholomorphically symplectomorphic to $(X,I,\sigma_s)$,
where $\sigma_s\in\Omega^2(X)$ satisfies $[\sigma_s]=\kappa- 2 \pi s \, \eta$, for all $s \in (a_-,a_+)$.
Moreover, under the identifications above, 
the Euler class of the holomorphic principal $\C^\times$-bundle 
$\CC^\times \cdot \Psi\inv(s) \to M \red{s}\SS^1$ is $\eta$
for all $s \in (a_-,a_+)$.
\end{corollary}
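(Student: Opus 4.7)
The plan is to deduce the corollary from Proposition~\ref{p:with crit pts} applied with $k=0$, after translating the moment map. Fix any $c\in(a_-,a_+)$ and set $\Psi_c:=\Psi-c$, which is again a proper moment map for the $\SS^1$-action on $(M,\omega)$. By hypothesis $\Psi_c^{-1}(a_--c,a_+-c)$ contains no fixed points and $a_--c<0<a_+-c$, so Proposition~\ref{p:with crit pts} applies with $k=0$ and with the moment map $\Psi_c$. In that degenerate case the exceptional divisors are empty, $\widehat{X}=X$, the blow-down map $q$ is the identity, and the two conclusions of the proposition collapse to a single statement: there exist $(X,I)$ and $\kappa_c,\eta\in H^2(X;\RR)$ such that for every $s'\in(a_--c,0)\cup(0,a_+-c)$, the reduced space at level $s'$ for $\Psi_c$, namely $M\red{c+s'}\SS^1$, is biholomorphically symplectomorphic to $(X,I,\sigma_{s'})$ with $[\sigma_{s'}]=\kappa_c-2\pi s'\eta$. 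Setting $s:=c+s'$ and $\kappa:=\kappa_c+2\pi c\,\eta$ rewrites this as $[\sigma_s]=\kappa-2\pi s\,\eta$ for all $s\in(a_-,a_+)\setminus\{c\}$.

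To include the missing value $s=c$, I would repeat the construction with any second shift $c'\in(a_-,a_+)\setminus\{c\}$ and verify that the two identifications agree on overlap, producing a single $(X,I,\kappa,\eta)$. The key point is that under the hypothesis of the corollary, the set $U_s:=\CC^\times\cdot\Psi^{-1}(s)$ is independent of $s\in(a_-,a_+)$. Indeed, by Lemma~\ref{l:monotone} the image of $\Psi$ on any $\CC^\times$-orbit meeting $\Psi^{-1}(a_-,a_+)$ is an open interval $(L^-,L^+)$; a finite endpoint $L^\pm$ can only be attained as a limit along the orbit at a fixed point, and properness of $\Psi$ together with the absence of fixed points in $\Psi^{-1}(a_-,a_+)$ then forces $(L^-,L^+)\supseteq(a_-,a_+)$. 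Hence $U^*:=U_s$ is well-defined, $X=U^*/\CC^\times$ is a single complex orbifold by Proposition~\ref{p:reduction}, and the canonical biholomorphisms $M\red{s}\SS^1\to U^*/\CC^\times$ are compatible across different choices of shift. Tracking the cohomology class through the translation $c\mapsto c'$ then yields uniform $\kappa$ and $\eta$.

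The Euler class statement is immediate from the corresponding claim in Proposition~\ref{p:with crit pts} (with $k=0$ the blow-up terms vanish), or alternatively from Proposition~\ref{p:reduction}, since the holomorphic $\CC^\times$-bundle $U^*\to X$ is the same for every $s\in(a_-,a_+)$. The main obstacle is precisely the verification of the independence of $U_s$ from $s$; once this is in hand, both the patching of the identifications for different shifts and the Euler class claim follow at once, and the remainder is routine bookkeeping of the linear dependence on $s$.
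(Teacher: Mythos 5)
Your approach is correct and matches the route the paper intends: the corollary is meant to follow from Proposition~\ref{p:with crit pts} applied with $k=0$, and the translation trick plus a second shift is exactly the right way to handle the moment-map normalisation $a_-<0<a_+$ and to fill in the value $s=c$ excluded from $(a_-,0)\cup(0,a_+)$. Your verification that $U_s:=\CC^\times\cdot\Psi\inv(s)$ is independent of $s\in(a_-,a_+)$ is sound; it is a special case of the paper's Lemma~\ref{l:limits} (which is proved later but does not depend on the corollary), and it is what makes the two shifted identifications patch together and gives the Euler class claim directly via Proposition~\ref{p:reduction}.

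One small gap: Proposition~\ref{p:with crit pts} carries the hypothesis $\dim_\CC M>1$, whereas the corollary does not. You invoke the proposition without checking this, so strictly speaking your argument does not cover $\dim_\CC M=1$. The paper disposes of that case with the remark that it is trivial (the reduced spaces are zero-dimensional, so $H^2$ vanishes and one may take $\kappa=\eta=0$); you should either add the same remark or observe that with $k=0$ no blow-up occurs and the dimension restriction in the proposition's proof is never used. Beyond that, the patching step (checking that $\kappa_c+2\pi c\,\eta$ is independent of the shift $c$) deserves one explicit line: once you know $U_s$ is constant in $s$ and the identifications $j_s\colon M\red{s}\SS^1\to U^*/\CC^\times$ all come from Proposition~\ref{p:reduction}, comparing the two formulas for $[\sigma_s]$ at any $s\notin\{c,c'\}$ forces $\kappa=\kappa'$ and $\eta=\eta'$.
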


By Proposition~\ref{p:reduction}, the reduced space $M \red{s} \SS^1$ is
isomorphic to $\big(\CC^\times \cdot \Psi\inv(s) \big)/\CC^\times$
for all regular $s \in \R$.
Hence, the first step in proving Proposition~\ref{p:with crit pts}
is analysing  $ \CC^\times \cdot \Psi\inv(s)$ for $s \in \R$,
or equivalently, $\Psi (\CC^\times \cdot x)$ for $x \in M$;
we accomplish this in the next two lemmas.

\begin{lemma}\label{l:limits}
Let $(M,J)$ be a complex manifold with a holomorphic $\CC^\times$-action, a symplectic form $\omega\in\Omega^2(M)^{\SS^1}$ tamed by the action, and a proper moment map $\Psi\colon M\to\RR$. 
Given $a \in \R$,  the following hold for all $x \in M$:
\begin{enumerate}
\item If $\Psi(x) < a$ then $a \in \Psi( \C^\times \cdot x) $  exactly if 
$$ \overline{\{ e^t \cdot x \mid  t \geq 0  \}} \cap \Psi\inv[\Psi(x),a] \cap M^{\SS^1} = \emptyset.$$
\item If $\Psi(x) > a$ then $a \in \Psi( \C^\times \cdot x) $  exactly if 
$$ \overline{\{ e^t \cdot x \mid  t \leq 0  \}} \cap \Psi\inv[a,\Psi(x)] \cap M^{\SS^1} = \emptyset.$$
\end{enumerate}
\end{lemma}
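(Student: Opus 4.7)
My plan is to reduce everything to the one-variable monotone function $\gamma_x(t) := \Psi(e^t \cdot x)$ and exploit Lemma~\ref{l:monotone} together with the properness of $\Psi$. The case $x \in M^{\SS^1}$ is trivial: both sides of each equivalence fail, since $\CC^\times \cdot x = \{x\}$ and the forward (resp.\ backward) orbit closure equals $\{x\}$, which lies in $M^{\SS^1}$ and meets every interval containing $\Psi(x)$. So I will assume $x \notin M^{\SS^1}$ throughout, in which case $\gamma_x$ is strictly increasing. By symmetry (reverse the $\CC^\times$-action as in Remark~\ref{r:reverse action}) it suffices to prove Claim~(1).

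For the forward direction of Claim~(1), suppose $a \in \Psi(\CC^\times \cdot x)$, so $\gamma_x(t_0) = a$ for some $t_0 > 0$. Assume for contradiction that some sequence $t_n \geq 0$ satisfies $e^{t_n}\cdot x \to y$ with $y \in M^{\SS^1}$ and $\Psi(y) \in [\Psi(x), a]$. A subsequence is either bounded or tends to $+\infty$. If $t_n \to t^\ast$, continuity gives $y = e^{t^\ast}\cdot x$; but then $e^{t^\ast}\cdot x \in M^{\SS^1}$ forces $x \in M^{\SS^1}$, contradicting our standing assumption. If $t_n \to \infty$, then since $\gamma_x$ is strictly increasing we have $\Psi(y) = \lim \gamma_x(t_n) \geq \gamma_x(t_0 + 1) > a$, contradicting $\Psi(y) \leq a$.

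For the reverse direction, suppose $a \notin \Psi(\CC^\times\cdot x)$. Since $\gamma_x$ is continuous, strictly increasing, and $\gamma_x(0) = \Psi(x) < a$, the supremum $T := \sup_{t \geq 0}\gamma_x(t)$ satisfies $T \leq a$, and in particular is finite. The forward orbit $\{e^t\cdot x \mid t \geq 0\}$ is then contained in $\Psi^{-1}[\Psi(x), T]$, which is compact by properness of $\Psi$. Hence there is a sequence $t_n \to \infty$ with $e^{t_n}\cdot x \to y$ for some $y \in M$, and $\Psi(y) = T \in (\Psi(x), a]$. It remains to check $y \in M^{\SS^1}$: otherwise Lemma~\ref{l:monotone} applied at $y$ gives $\Psi(e^1 \cdot y) > T$, while on the other hand $e^1\cdot y = \lim e^{t_n + 1}\cdot x$ forces $\Psi(e^1\cdot y) \leq T$, a contradiction. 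So $y$ is a fixed point and lies in the required intersection.

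The only genuinely nontrivial step is the reverse direction, where one must combine properness (to extract a limit point of the forward orbit) with the tameness-driven strict monotonicity (to force that limit to be fixed). Everything else is a direct application of Lemma~\ref{l:monotone} and intermediate-value reasoning for $\gamma_x$.
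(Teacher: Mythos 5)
Your proof is correct, and the forward direction coincides with the paper's argument: split the approximating sequence into bounded vs.\ unbounded subcases, rule out the bounded case because $e^{t^*}\cdot x$ cannot be fixed, and then use strict monotonicity of $\gamma_x$ to contradict $\Psi(y)\leq a$. (One could streamline slightly by observing directly that the limit $\Psi(y)$ is an upper bound for $\gamma_x$ on $[0,\infty)$, so $\gamma_x$ never reaches $a$.)

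The reverse direction is where you genuinely diverge from the paper. The paper argues by contradiction: if the intersection is empty and $a\notin\Psi(\C^\times\cdot x)$, then $K=\overline{\{e^t\cdot x\mid t\geq 0\}}$ is a compact set containing no fixed points, so $\omega(\xi_M,J\xi_M)$ attains a positive minimum $\eps$ on $K$, forcing $\gamma_x(t)>\Psi(x)+t\eps$ and hence an escape from $K$. You instead work constructively: from properness you extract a convergent subsequence $e^{t_n}\cdot x\to y$ with $t_n\to\infty$ and $\Psi(y)=T$, and then you detect that $y$ is fixed by a local continuity argument (if not, $\Psi(e^1\cdot y)>T$, yet $\Psi(e^1\cdot y)=\lim\gamma_x(t_n+1)\leq T$). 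Both routes trade on exactly the same two ingredients (properness for compactness and Lemma~\ref{l:monotone} for strict increase off the fixed set), but the paper packages the tameness hypothesis as a uniform lower bound on the derivative along the whole orbit closure, whereas you use it only once, at the limit point. Your version is a bit more direct and arguably more transparent, since it actually exhibits the fixed point asserted to exist; the paper's version is closer to a gradient-flow escape estimate. Either is a valid proof of Lemma~\ref{l:limits}.
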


\begin{proof}
By symmetry, it suffices to consider $x \in M$ with  $\Psi(x) < a$.
If $x \in M^{\SS^1}$, then -- since the action is holomorphic -- $x$ is also fixed
by $\C^\times$.  Hence, $a \not\in \Psi(\C^\times \cdot x)$ and the equality displayed in Claim (1) does not hold.
Thus, we may assume that $x \not\in M^{\SS^1}$.

Assume first that
$$ p \in \overline{\{ e^t \cdot x \mid  t \geq 0  \}} \cap \Psi\inv[\Psi(x),a] \cap M^{\SS^1} \neq \emptyset.$$
Since $p \in M^{\SS^1}$,  
$e^t \cdot x \neq p$ for any $t \in \R$.
Hence, since  $p \in \overline{ \{ e^t \cdot x \mid t \geq 0\} }$, there  exists a sequence
of $t_i \in \R$ with $$\lim_{i \to \infty} t_i = \infty  \mbox{  and  }
\lim_{i \to \infty} e^{t_i} \cdot x = p.$$
Moreover, by 
Lemma~\ref{l:monotone},
the map $t \mapsto \Psi(e^t \cdot x)$ is strictly  increasing,
and so  $\Psi(e^t \cdot x) < \Psi(p) \leq a$ for all $t \in \R$. 
Therefore, since $\Psi$ is $\SS^1$-invariant, $a \not\in \Psi(\CC^\times \cdot x)$.

So assume instead that
$$\overline{\{ e^t \cdot x \mid  t \geq 0  \}} \cap \Psi\inv[\Psi(x),a] \cap M^{\SS^1} = \emptyset.$$
If $a \not\in \Psi(\CC^\times \cdot x)$, then since the map $t \mapsto  \Psi(e^t \cdot x)$ is increasing,
$ \Psi(e^t \cdot x) \in [\Psi(x),a)$ for all $t  \geq 0$.
Hence, since $\Psi$ is proper, the set  $K:=\overline{\{e^t\cdot x\mid t \geq 0 \}}$
 is a compact set that contains no fixed points.
Thus, Lemma~\ref{l:monotone} implies that there exists $\eps>0$ 
so that $\frac{d}{dt}\Big|_{t=0}\Psi(e^t\cdot y)>\eps$ for all $y\in K$.
  Since $e^s\cdot(e^t\cdot y)=e^{st}\cdot y$, this implies that $\frac{d}{dt}\Psi(e^t\cdot x)>\eps$ for all $t\geq 0$.
Thus, $\Psi(e^t\cdot x)>\Psi(x)+t\eps$ for all $t>0$.
Since this gives a contradiction, $a \in \Psi(\CC^\times \cdot x)$.
\end{proof}

\begin{lemma}\label{l:lnfplus}
Let $(M,J)$ be a complex manifold with a holomorphic $\CC^\times$-action, a symplectic form $\omega\in\Omega^2(M)^{\SS^1}$ tamed by the action, and a proper moment map $\Psi\colon M\to\RR$.  
Assume that that $M^{\SS^1} \cap \Psi^{-1}(a_-,a_+) = \{p_1,\dots,p_k\} \subset \Psi\inv(0)$.
There exists a $\CC^\times$-invariant neighbourhood
$W \subset M$ of $\{p_1,\dots,p_k\}$  which is $\CC^\times$-equivariantly biholomorphic
to a  neighbourhood of $\coprod_{j=1}^k \{0\}$ in
$\coprod_{j=1}^k \CC^{n^-_j} \times \CC^{n^+_j}$, 
where  $\C^\times$ acts linearly on each $\CC^{n^-_j}$ (respectively, $\CC^{n^+_j}$\!)
with negative (respectively, positive) weights. If we identify these neighbourhoods, then
$$\C^\times \cdot \Psi\inv(s) = 
\begin{cases}
U_- :=
\CC^\times \cdot \Psi\inv(a_-,a_+) \smallsetminus
\Big( \coprod_{j=1}^k \big( \{ 0\} \times \CC^{n_j^+}   \big)  \Big)
 & \mbox{if } s \in (a_-,0) \\
U_+ 
:= \CC^\times \cdot \Psi\inv(a_-,a_+) \smallsetminus
\Big( \coprod_{j=1}^k \big( \C^{n_j^-}  \times \{0\} \big)  \Big) 
& \mbox{if } s \in (0,a_+) \\
\left( U_- \cap U_+  \right) \cup \{p_1,\dots,p_k\} & \mbox{if }s = 0
\end{cases}
$$
where $ \coprod_{j=1}^k \big( \{ 0\} \times \CC^{n_j^+}   \big) \subset W  $ and
$ \coprod_{j=1}^k \big( \C^{n_j^-}  \times \{0\} \big) \subset W $.

\end{lemma}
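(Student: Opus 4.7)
The plan is to first invoke the tame local normal form (Proposition~\ref{p:local normal form}) to construct $W$ with the stated extra properties, and then, working in the linear model, to identify $\CC^\times\cdot\Psi^{-1}(s)$ case-by-case via Lemma~\ref{l:limits}. Since the $p_j$ are \emph{isolated} fixed points, the slice $\CC^{n_j}$ supplied by Proposition~\ref{p:local normal form} has no zero weights; grouping weights by sign factors it as $\CC^{n_j^-}\times\CC^{n_j^+}$. Tracing through the proof of Proposition~\ref{p:local normal form}, the orbitally convex neighbourhood $V$ produced by Lemma~\ref{l:vectorspace2} contains $\{0\}\times D^+_\eps$ and $D^-_\eps\times\{0\}$; and since $\CC^\times\cdot D^+_\eps=\CC^{n_j^+}$ and $\CC^\times\cdot D^-_\eps=\CC^{n_j^-}$, the $\CC^\times$-saturation $W$ that the proof yields contains the full unstable and stable linear subspaces $\{0\}\times\CC^{n_j^+}$ and $\CC^{n_j^-}\times\{0\}$ of each $p_j$, as required.

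The inclusion $\CC^\times\cdot\Psi^{-1}(s)\subseteq U_-$ for $s\in(a_-,0)$ is easy: any $x$ in the unstable subspace $\{0\}\times\CC^{n_j^+}$ satisfies $e^t\cdot x\to p_j$ as $t\to-\infty$, so $\Psi(e^t\cdot x)\to 0$; by Lemma~\ref{l:monotone}, $\Psi$ is strictly increasing along the orbit, hence $\Psi(\CC^\times\cdot x)\subseteq(0,\infty)$, so no $s<0$ is attained. The analogous statement for the stable subspaces and $s\in(0,a_+)$ follows identically.

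For the reverse inclusion $U_-\subseteq\CC^\times\cdot\Psi^{-1}(s)$ with $s\in(a_-,0)$: fix $x\in U_-$, choose $t_0$ with $c:=\Psi(y)\in(a_-,a_+)$ for $y:=e^{t_0}\cdot x$, and apply Lemma~\ref{l:limits} to reduce $s\in\Psi(\CC^\times\cdot y)$ to the absence of fixed points in an appropriate half-orbit closure. When $c<0$ the relevant $\Psi$-interval lies in $(a_-,0)$ and therefore contains no fixed points, so the conclusion is immediate. The nontrivial case is $c\geq 0$, where Lemma~\ref{l:limits}(2) requires $\overline{\{e^t\cdot y\mid t\leq 0\}}\cap\{p_1,\ldots,p_k\}=\emptyset$. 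If some $p_j$ lay in this closure, one could extract $t_i\to-\infty$ with $e^{t_i}\cdot y\to p_j$; monotonicity of $\Psi$ would then force $\lim_{t\to-\infty}\Psi(e^t\cdot y)=0$, and Corollary~\ref{delta} would trap $e^t\cdot y$ in the linearising chart of $p_j$ for all sufficiently negative $t$. Writing $e^t\cdot y=(a^-(t),a^+(t))$ in the linear model, the negative-weight flow gives $\|a^-(t)\|\to\infty$ as $t\to-\infty$ unless $a^-\equiv 0$; combined with $a^-(t_i)\to 0$, this forces $y\in\{0\}\times\CC^{n_j^+}$, and then $x$ lies in the same $\CC^\times$-invariant set, contradicting $x\in U_-$. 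The case $s\in(0,a_+)$ is completely symmetric (or may be deduced by reversing the action, as in Remark~\ref{r:reverse action}).

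Finally, for $s=0$, each $p_j$ lies trivially in $\CC^\times\cdot\Psi^{-1}(0)$; for any non-fixed $x\in\CC^\times\cdot\Psi^{-1}(0)$, Lemma~\ref{l:monotone} makes $\Psi(\CC^\times\cdot x)$ an open interval around $0$, so $\pm\eps\in\Psi(\CC^\times\cdot x)$ for some small $\eps>0$, placing $x\in U_-\cap U_+$ by the preceding cases; conversely, $x\in U_-\cap U_+$ yields a $\Psi$-interval straddling $0$, hence containing $0$. The central difficulty is the third paragraph: showing that an orbit whose backward closure accumulates at $p_j$ must already lie in the local unstable manifold of $p_j$. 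This requires combining monotonicity (Lemma~\ref{l:monotone}), the trapping provided by Corollary~\ref{delta}, and the hyperbolic behaviour of the linear $\CC^\times$-action on $\CC^{n_j^-}\times\CC^{n_j^+}$.
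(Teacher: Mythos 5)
Your proof is correct and follows essentially the paper's route: produce $W$ via Proposition~\ref{p:local normal form}, then reduce the computation of $\CC^\times\cdot\Psi^{-1}(s)$ to Lemma~\ref{l:limits} together with a local hyperbolicity argument at the fixed points. The one place you work harder than necessary is in showing that an orbit whose backward closure accumulates at $p_j$ must already lie in $\{0\}\times\CC^{n_j^+}$: you invoke Corollary~\ref{delta} to trap $e^t\cdot y$ in a bounded part of the chart for $t\ll 0$ and then argue via $\|a^-(t)\|\to\infty$. The paper instead observes that the connected component $W_j$ of $W$ containing $p_j$ is open and $\CC^\times$-invariant, so once the orbit touches $W_j$ the \emph{entire} $\CC^\times$-orbit (including $y$ itself) lies in $W_j$; the linear formula then holds for all $t$, and the estimate $\|e^t\cdot a^-\|\geq\|a^-\|$ for $t\leq 0$ keeps the backward orbit uniformly away from $p_j$ whenever $a^-\neq 0$, with no need for boundedness of the trapping set. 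The same openness-plus-$\CC^\times$-invariance of $W_j$ also gives $\CC^{n_j^-}\times\{0\}\subset W_j$ and $\{0\}\times\CC^{n_j^+}\subset W_j$ directly, so your careful tracing through the sets $D^\pm_\eps$ of Lemma~\ref{l:vectorspace2} is not needed either. These are matters of economy; the logic is sound and the result follows as you describe.
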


\begin{proof}
The first claim is an immediate consequence of Proposition~\ref{p:local normal form}.

For each $j$, the connected component $W_j$ of $W$ containing $p_j$  
is open and $\C^\times$-invariant.
Hence, for any $x \in M$,
if $p_j \in \overline{ \{ e^t \cdot x \mid t \in \R \} } $
then $x \in W_j$
and so
\begin{equation*}
 p_j \in \overline{ \{ e^t \cdot x \mid t \geq 0 \} } 
 \text{ exactly if } x\in   \CC^{n_j^-}\times\{0\} \subset W_j. 
\end{equation*}
Since $p_1,\dots,p_k$ are the only fixed points in $\Psi\inv(a_-,a_+)$, 
this implies that
\begin{equation*}\label{positiveclosure}
\overline{ \{ e^t \cdot x \mid t \geq 0 \} }  \cap \Psi\inv(a_-,a_+) \cap M^{\SS^1} \neq \emptyset
 \text{ exactly if } x\in \coprod_{j=1}^k \big( \CC^{n_j^-}\times\{0\} \big) \subset  W.
\end{equation*}
By a similar argument, 
\begin{equation*}\label{negativeclosure}
\overline{ \{ e^t \cdot x \mid t \leq 0 \} }  \cap \Psi\inv(a_-,a_+) \cap M^{\SS^1} \neq \emptyset
 \text{ exactly if } x\in \coprod_{j=1}^k \big( \{0\} \times \CC^{n_j^+}\big) \subset  W. 
\end{equation*}
Therefore, since $M^{\SS^1} \cap \Psi\inv(a_-,a_+) = \{p_1,\dots,p_k\} \subset \Psi\inv(0)$,
the claim follows from  Lemma~\ref{l:limits}.
\end{proof}



If we remove the claims about the cohomology classes  $[\sigma_s]$ and 
$[\widehat{\sigma}_s]$ from  Proposition~\ref{p:with crit pts}, 
then the revised statement follows easily from 
Proposition~\ref{p:reduction} and Lemma~\ref{l:lnfplus} above.
(For details, see the proof
of Proposition~\ref{p:with crit pts} later in this section.)
Since these claims  
are formally identical
to statements in  \cite[Theorem 13.2]{GS89}, it  may seem that  we should
complete our proof by simply quoting their results.
Unfortunately,  the blow-down maps in our paper and their paper
are  not identical. 
Thus for completeness we include a proof of these claims,
which relies on the next two lemmas.

\begin{lemma}\label{l:restriction}
Let $\SS^1$ act on a symplectic manifold $(M,\omega)$ with moment map $\Psi \colon M \to \R$.
Assume that
$M^{\SS^1} = \{p_1,\dots,p_k\} \subset \Psi\inv(0)$.
Then there exists $\overline{\kappa} \in H^2\big(M / \SS^1\big)$ so that
$$[\omega_s] = i_s^*(\overline{\kappa}) - 2 \pi s \, \eta_s$$
for every regular value $s \in \R$,
where 
$\omega_s\in \Omega^2(M \red{s} \SS^1)$ is the reduced symplectic form,
$i_s \colon M \red{s} \SS^1 \to M/\SS^1$ is the natural inclusion,
and $\eta_s \in H^2(M \red{s} \SS^1)$ is the Euler class of the circle bundle
$\Psi\inv(s) \to M \red{s} \SS^1$.
\end{lemma}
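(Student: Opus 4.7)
My plan is to construct $\overline{\kappa}$ as an extension to $M/\SS^1$ of the cohomology class of an explicit basic $2$-form on the locally free locus $U/\SS^1$, where $U := M \smallsetminus M^{\SS^1}$, built from a connection $1$-form, and then to handle the extension across each singular point $\bar{p}_j \in M/\SS^1$ by a Mayer--Vietoris argument.

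Since $M^{\SS^1}$ is discrete, the $\SS^1$-action on $U$ is locally free, so by averaging a local trivialisation I obtain an $\SS^1$-invariant connection $1$-form $\theta \in \Omega^1(U)^{\SS^1}$ with $\iota_{\xi_M}\theta = 1$. I set
\[
\tilde{\omega} \; := \; \omega - d(\Psi\,\theta) \; = \; \omega - d\Psi \wedge \theta - \Psi\,d\theta \; \in \; \Omega^2(U)^{\SS^1}.
\]
Using $\iota_{\xi_M}\omega = -d\Psi$, $\iota_{\xi_M}\theta = 1$, and the basicness of $d\theta$, a direct contraction gives $\iota_{\xi_M}\tilde{\omega} = 0$, so $\tilde{\omega}$ is basic and descends to a closed form $\bar{\omega} \in \Omega^2(U/\SS^1)$. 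For any regular value $s$ of $\Psi$ (necessarily nonzero, since $M^{\SS^1} \subset \Psi\inv(0)$), the level $\Psi\inv(s)$ lies in $U$ and $\tilde{\omega}|_{\Psi\inv(s)} = \omega|_{\Psi\inv(s)} - s\,d\theta|_{\Psi\inv(s)}$ descends on $M\red{s}\SS^1$ to a representative of $[\omega_s] + 2\pi s\,\eta_s$, under the standard convention that $d\theta|_{\Psi\inv(s)}$ descends to a representative of $-2\pi\,\eta_s$. Hence, as soon as $[\bar{\omega}] \in H^2(U/\SS^1)$ is extended to a class $\overline{\kappa} \in H^2(M/\SS^1)$, the formula $[\omega_s] = i_s^*\overline{\kappa} - 2\pi s\,\eta_s$ follows.

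The heart of the argument is this extension. I will cover $M/\SS^1$ by $U/\SS^1$ together with small open neighbourhoods $V_j/\SS^1$ of the $\bar{p}_j$, using Lemma~\ref{l:bochner} to identify each $V_j$ with a ball in a linear $\SS^1$-model on $\CC^n$. The radial contraction on such a ball commutes with the linear $\SS^1$-action, so $V_j/\SS^1$ is contractible and $H^2(V_j/\SS^1) = 0$; by Mayer--Vietoris, $[\bar{\omega}]$ extends to a class on $M/\SS^1$ exactly when its restriction to each punctured quotient $(V_j \smallsetminus \{p_j\})/\SS^1$ vanishes in $H^2$. To prove that vanishing, I apply the equivariant Poincar\'e lemma on the star-shaped $V_j$ to obtain $\lambda \in \Omega^1(V_j)^{\SS^1}$ with $d\lambda = \omega|_{V_j}$; invariance forces $d(\iota_{\xi_M}\lambda) = d\Psi$, and evaluating at $p_j$ (where $\xi_M$ vanishes and $\Psi(p_j) = 0$) pins down the integration constant so that $\iota_{\xi_M}\lambda = \Psi$ on $V_j$. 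Then on $V_j \smallsetminus \{p_j\}$ the $1$-form $\lambda - \Psi\,\theta$ is invariant and horizontal, hence basic, and descends to a primitive $\bar{\mu}_j$ of $\bar{\omega}|_{(V_j \smallsetminus \{p_j\})/\SS^1}$; the restriction is therefore exact, Mayer--Vietoris produces the desired $\overline{\kappa}$, and the previous paragraph finishes the proof.

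The hardest part is exactly this extension step: because $\theta$ is genuinely singular at each fixed point, $\bar{\omega}$ itself does not extend smoothly across $\bar{p}_j$, so the argument has to be carried out at the cohomology level. The crucial trick is that an $\SS^1$-invariant primitive $\lambda$ of $\omega$ on $V_j$ can be chosen with $\iota_{\xi_M}\lambda = \Psi$ exactly (not merely up to a constant), which is precisely what lets $\Psi\,\theta$ be absorbed into a basic primitive of $\bar{\omega}$ near each singular point.
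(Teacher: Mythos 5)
Your proof is correct, but it takes a genuinely different route from the paper. Both you and the authors begin by forming the basic closed form $\omega - d(\Psi\,\theta)$ on $M \smallsetminus M^{\SS^1}$ and noting that its class pulls back to $[\omega_s] + 2\pi s\,\eta_s$. The difference is in how each argument crosses the fixed points. The paper modifies $\omega$ \emph{globally}: using Lemma~\ref{l:modifyforms} together with Bochner linearisation, they produce $\nu \in \Omega^1(M)^{\SS^1}$ so that $\omega' = \omega - d\nu$ and the shifted moment map $\Psi' = \Psi - \xi_M \hook \nu$ both vanish identically near each $p_j$; then $\omega' - d(\Psi'\theta)$ is literally a smooth basic form on all of $M$, descends to a closed form on $M/\SS^1$, and $\overline{\kappa}$ is its class. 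You instead work cohomologically: you keep $[\bar\omega] \in H^2(U/\SS^1)$ as is, observe that each $V_j/\SS^1$ is contractible, and show via the invariant Poincar\'e lemma and the pinning $\iota_{\xi_M}\lambda = \Psi$ (which uses $\Psi(p_j)=0$ exactly as the paper does) that $[\bar\omega]$ dies on the punctured local quotients, so Mayer--Vietoris supplies the extension $\overline{\kappa}$. Your approach is somewhat more economical, avoiding the bump-function construction of Lemma~\ref{l:modifyforms} and the corresponding global modification of the moment map; the paper's approach has the advantage of producing an explicit closed basic representative of $\overline{\kappa}$ on $M/\SS^1$, so one never has to invoke Mayer--Vietoris on the singular quotient or worry about which cohomology model one is using there. (The paper also records, in a remark, a second proof via equivariant cohomology and the Leray spectral sequence; yours is a third.) One point worth making explicit if you write this up: you should state which cohomology of $M/\SS^1$ you mean (basic de Rham or singular) and note that Mayer--Vietoris is available for it on this quotient, e.g.\ because $\SS^1$-invariant partitions of unity on $M$ descend.
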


\begin{proof}
Let $\theta \in \Omega^1\big(M \smallsetminus M^{\SS^1}\big)^{\SS^1}$ be a 
connection one-form, and let
$\Omega \in \Omega^2\big(\big(M \smallsetminus M^{\SS^1}\big)/\SS^1\big)$ be the 
associated curvature form.
Since  $\omega - d( \Psi \theta) \in \Omega^2\big(M \smallsetminus M^{\SS^1}\big)^{\SS^1}$
is closed and basic, it is the pull-back of a  closed form $\overline{\omega} \in \Omega^2\big(\big(M \smallsetminus M^{\SS^1}\big)/\SS^1\big) $.
Moreover, $i_s^*([\overline{\omega}]) = [\omega_s] + s [\, \Omega|_{M \red{s} \SS^1}]
= [\omega_s] + 2 \pi s \, \eta_s$
for all regular $s \in \R$.

By Lemma~\ref{l:modifyforms} and the  Bochner linearisation theorem
({\it cf.\ }Lemma~\ref{l:bochner}),
there exists $\nu \in \Omega^1(M)^{\SS^1}$ such that $\omega' := \omega - d \nu$
vanishes on a neighbourhood of $\{p_1,\dots,p_k\}$;
let $\Psi'  = \Psi  - \xi_M \hook \nu$.
Since 
$\xi_M \hook \omega' = - d \Psi'$
and $\Psi'(p_j) = \Psi(p_j) = 0$ for all $j$, the function $\Psi'$
also vanishes on a neighbourhood of $\{p_1,\dots,p_k\}$.
Therefore, $\omega' - d ( \Psi' \theta) \in \Omega^2(M)^{\SS^1}$ is  well-defined.
Hence, it is the pull-back
of a closed form $\overline{\omega}' \in \Omega^2(M/\SS^1)$; 
let $\overline{\kappa} = [\overline{\omega}'] \in H^2(M/\SS^1)$.
Since $\omega' - d(\Psi' \theta)$ and $\omega - d (\Psi \theta)$
differ by the differential of the basic form
$\nu - (\xi_M \hook \nu) \theta \in \Omega^1\big(M \smallsetminus M^{\SS^1}\big)^{\SS^1}$,
$i_s^*(\overline{\kappa})= i_s^*([\overline{\omega}])$ for all regular $s \in \R$.
\end{proof}

\begin{remark}
Alternately, we could
replace the second paragraph of the  proof above with
the following more sophisticated argument, which works for arbitrary $M^{\SS^1} \subset \Psi\inv(0)$:

In the de Rham model of equivariant cohomology, $\omega + \Psi$ represents an
equivariant cohomology class on $M$; moreover, 
$[\omega + \Psi]$ maps to $[\overline{\omega}]$ 
under the natural isomorphism
$H^*_{\SS^1}\big(M \smallsetminus M^{\SS^1}\big) \stackrel{\cong}{\to} H^*\big( \big(M \smallsetminus M^{\SS^1}\big)/\SS^1\big) $ 
\cite{AB}, \cite{GGK}.
Since $\Psi\big(M^{\SS^1}\big) = 0$,
the restriction  $[ \omega + \Psi] \big|_{M^{\SS^1}} \in H^2_{\SS^1}\big(M^{\SS^1}\big)$ 
is in the image of the natural inclusion $H^*\big(M^{\SS^1}\big) \hookrightarrow H^*_{\SS^1}\big(M^{\SS^1}\big)$.  Hence,
by the Leray spectral sequence  for the natural map $M \times_{\SS^1} E\SS^1 \to M/\SS^1$,
there exists a class $\overline{\kappa} \in H^*(M/\SS^1)$
that maps to   $[\omega + \Psi]$  under the 
pull-back  $H^*(M/\SS^1) \to H^*_{\SS^1}(M)$.
By construction,
$i_s^*(\overline{\kappa})= i_s^*([\overline{\omega}])$ for all regular $s \in \R$.
\end{remark}

\begin{lemma}\label{l:cont fct}
Let $(M,J)$ be a complex manifold with a holomorphic $\CC^\times$-action, a symplectic form $\omega\in\Omega^2(M)^{\SS^1}$ tamed by the action, and a proper moment map $\Psi\colon M\to\RR$.
Assume  that $M^{\SS^1} \cap \Psi\inv(a_-,a_+) = \{p_1,\dots,p_k\} \subset
\Psi\inv(0)$ for some $a_- < 0 < a_+$.
Then
for all $x \in \C^\times \cdot \Psi\inv(a_-,a_+)$ and $\tau \in [0,1]$, 
there exists a unique $G(x,\tau) \in M$ such that
$$G(x,\tau)\in\overline{\{e^t \cdot x \mid t \in \R \}}
\mbox{ and }
\Psi(G(x,\tau))= \tau \, \Psi(x);$$
moreover, $G \colon \C^\times \cdot \Psi^{-1}(a_-,a_+)\times[0,1]\to M$ 
is a continuous $\SS^1$-equivariant map.
\end{lemma}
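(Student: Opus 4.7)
The strategy is to analyze orbit closures using Lemma~\ref{l:monotone} and properness, define $G$ case by case, and handle the limiting behaviour near the fixed points via the local normal form. I begin by studying $s^\pm_x := \lim_{t \to \pm\infty} \Psi(e^t \cdot x) \in \R \cup \{\pm\infty\}$ for each non-fixed $x$. Strict monotonicity (Lemma~\ref{l:monotone}) implies $\Psi(\R \cdot x) = (s^-_x, s^+_x)$; whenever $s^\pm_x$ is finite, properness of $\Psi$ forces $\{e^t \cdot x : \pm t \leq 0\}$ into a compact set, and a connectedness argument combined with $\Psi$ being strictly increasing on every orbit (so the $\alpha$- and $\omega$-limit sets consist of fixed points) shows the limit $\lim_{t \to \pm\infty} e^t \cdot x$ is a single fixed point with $\Psi$-value $s^\pm_x$. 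Using the hypothesis that fixed points in $\Psi^{-1}(a_-,a_+)$ all lie in $\Psi^{-1}(0)$, I will deduce that $s^-_x \leq 0$ whenever $\Psi(x) > 0$, and $s^+_x \geq 0$ whenever $\Psi(x) < 0$, for every $x \in \CC^\times \cdot \Psi^{-1}(a_-,a_+)$.

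I now define $G(x,\tau)$. If $x$ is fixed, set $G(x,\tau) := x$. Otherwise, the target moment value $\tau\Psi(x)$ lies between $0$ and $\Psi(x)$, and by the previous step belongs to $[s^-_x, s^+_x]$. If $\tau\Psi(x) \in (s^-_x, s^+_x)$, Lemma~\ref{l:monotone2} supplies a unique $t = f(x, \tau\Psi(x)) \in \R$ with $\Psi(e^t \cdot x) = \tau\Psi(x)$, and I set $G(x,\tau) := e^t \cdot x$. The only remaining possibility (given $\tau \in [0,1]$) is $\tau = 0$ with $s^-_x = 0$ (or symmetrically $s^+_x = 0$), in which case I set $G(x, 0)$ to be the corresponding fixed-point limit of the orbit. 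Uniqueness in every case follows from strict monotonicity on the orbit together with the observation that the only points in $\overline{\R \cdot x}$ with moment value $\tau\Psi(x)$ are either the unique orbit point given by Lemma~\ref{l:monotone2} or, in the boundary case, the limit fixed point.

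For continuity and smoothness away from the ``boundary'' locus, note that the map $(x,\tau) \mapsto (x, \tau\Psi(x))$ sends its domain into the open set $\cU$ of Lemma~\ref{l:monotone2} whenever $\tau\Psi(x) \in (s^-_x, s^+_x)$; there $G(x,\tau) = e^{f(x,\tau\Psi(x))} \cdot x$ is smooth. The delicate case is continuity at $(x_0, \tau_0)$ where $G(x_0, \tau_0)$ is a fixed-point limit, or where $x_0 = p_j$ is itself fixed; both occur only near $\{p_1,\dots,p_k\}$. Here I will invoke Proposition~\ref{p:local normal form} (and Lemma~\ref{l:lnfplus}) to identify a $\CC^\times$-invariant neighbourhood of $p_j$ with a neighbourhood of the origin in $\CC^{n_j^-} \times \CC^{n_j^+}$ on which $\CC^\times$ acts linearly with weights of prescribed signs. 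In these coordinates the $\R$-action is explicit, and the functions $N_\pm$ from Lemma~\ref{l:vectorspace2} together with properness of $\Psi$ give uniform control: if $(y_n, \tau_n) \to (x_0, \tau_0)$, then $\Psi(G(y_n, \tau_n)) = \tau_n\Psi(y_n) \to \tau_0\Psi(x_0) = \Psi(G(x_0, \tau_0))$ forces $\{G(y_n,\tau_n)\}$ into a compact set, and the explicit orbit formulas in the linear model pin down the subsequential limit as $G(x_0,\tau_0)$.

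Finally, $\SS^1$-equivariance is automatic: for $\lambda \in \SS^1$, the point $\lambda \cdot G(x,\tau)$ lies in $\overline{\R \cdot (\lambda \cdot x)}$ and satisfies $\Psi(\lambda \cdot G(x,\tau)) = \Psi(G(x,\tau)) = \tau\Psi(x) = \tau\Psi(\lambda \cdot x)$, so by uniqueness it equals $G(\lambda \cdot x, \tau)$. The main obstacle is the continuity argument in the third paragraph: the smooth inverse-function machinery of Lemma~\ref{l:monotone2} degenerates at the boundary, and one must track in parallel the two qualitatively different cases of nearby orbits — those with $s^-_y < 0$ (where $G(y,0)$ is a genuine orbit point passing through $\Psi^{-1}(0)$) and those with $s^-_y = 0$ (where $G(y,0)$ is a fixed point) — and show they both converge to the same limit in the local linear model.
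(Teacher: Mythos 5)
Your definition of $G$, the case analysis via the limits $s^\pm_x$, the uniqueness argument, and the $\SS^1$-equivariance argument all match the paper's proof in substance: the paper also defines $G$ piecewise, with $G(x,\tau) = e^{f(x,\tau\Psi(x))}\cdot x$ generically and $G(x,\tau) = p_j$ in the degenerate cases, using Lemma~\ref{l:monotone2} exactly as you do. (The paper arrives at the case analysis via Lemma~\ref{l:lnfplus} rather than your limit-set argument, but that is a cosmetic difference; your claim that the $\alpha$- and $\omega$-limit sets are single fixed points is correct here because they are connected, contained in $\Psi\inv(0)\cap\Psi\inv(a_-,a_+)$, and the fixed points there are isolated.)

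The genuine gap is in the continuity argument at points of $G\inv(\{p_1,\dots,p_k\})$, which you yourself flag as ``the main obstacle.'' Properness and $\Psi(G(y_n,\tau_n)) \to 0$ do give a convergent subsequence $G(y_{n_k},\tau_{n_k}) \to q$ with $\Psi(q)=0$, but the assertion that ``the explicit orbit formulas in the linear model pin down the subsequential limit as $G(x_0,\tau_0)$'' is circular: to apply formulas valid in the linear chart around $p_j$ you must already know that $G(y_{n_k},\tau_{n_k})$ eventually lies \emph{in} that chart, and that is precisely what is at stake. A priori $q$ could be a different fixed point $p_i$ with $i\neq j$, or even a non-fixed point of $\Psi\inv(0)$, since the orbit of $y_n$ may travel far from $p_j$ before reaching moment value $\tau_n\Psi(y_n)$. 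The paper closes this gap with Corollary~\ref{delta}: for any $\SS^1$-invariant neighbourhood $U$ of $p_j$ there exist $\delta>0$ and $V\subseteq U$ with $\CC^\times\cdot V\cap\Psi\inv(-\delta,\delta)\subset V$. This is a \emph{trapping} statement — orbits that start near $p_j$ and have small moment value must remain near $p_j$ — which follows from the orbital convexity built into Proposition~\ref{p:local normal form} (i.e., from Lemma~\ref{l:vectorspace2}). With it, the set $\{(x,\tau)\in(\CC^\times\cdot V)\times[0,1] : |\tau\Psi(x)|<\delta\}$ is an open neighbourhood of $G\inv(p_j)$ mapping into $U$, which gives continuity directly, without passing to subsequences. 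You gesture at the relevant tools (Lemma~\ref{l:vectorspace2}, the functions $N_\pm$), but you never extract from them the quantitative trapping statement that makes the argument go through, so as written the proof of continuity is incomplete.
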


\begin{proof}
Define 
$$\cU:=\{(x,s) \in \big(M\smallsetminus M^{\SS^1}\big) \times \R \mid
s \in \Psi(\CC^\times\cdot x)\}.$$  
By Lemma~\ref{l:monotone2},
 for all $(x, s) \in \cU$, there exists a unique $f(x,s) \in \RR$ such that $$\Psi(e^{f(x,s)}\cdot x)=s;$$
moreover, $\cU$ is open, and $f \colon \cU \to \RR$ is a smooth $\SS^1$-invariant function.
Moreover, Lemma~\ref{l:lnfplus} implies that, for
 $x \in \C^\times \cdot \Psi\inv(a_-,a_+) \smallsetminus \{p_1,\dots,p_k\}$ 
and $s \in (a_-,a_+)$,
$$ (x,s) \not\in \cU \Leftrightarrow 
\Big( x \in \coprod_{j=1}^k  \big( \C^{n_j^-} \times \{0\} \big)  \mbox{ and } s \geq 0 \Big)
\mbox{ or  }
\Big( x \in \coprod_{j=1}^k  \big( \{0\} \times \C^{n_j^+}  \big)  \mbox{ and } s \leq 0 \Big).
$$

Therefore, for all $x \in \C^\times \cdot \Psi\inv(a_-,a_+)$ and $\tau \in [0,1]$, there exists
a unique $G(x,\tau) \in M$ such that
$G(x,\tau) \in \overline{ \{e^t \cdot x \mid t \in \R \}}$ and
$\Psi(G(x,\tau)) = \tau \,\Psi(x)$:
$$
G(x,\tau) =
\begin{cases}
p_j & \mbox{if } x = p_j \mbox{ for some }j,\\
p_j & \mbox{if } 
x \in \big( \big(\C^{n_j^-} \times \{0\} \big) 
\cup \big(\{0\} \times \C^{n_j^+}\big) \big)
\mbox{ for some } j
\mbox{ and } 
\tau = 0 
, \\
e^{f(x,\tau \, \Psi(x))} \cdot x& \mbox{otherwise.}
\end{cases}
$$
Clearly, $G \colon \C^\times \cdot \Psi\inv(a_-,a_+) \times [0,1] \to M$ is an $\SS^1$-equivariant map
that is continuous on the complement of $G\inv(\{p_1,\dots,p_k\})$.
Let $U \subseteq \Psi\inv(a_-,a_+)$ be an $\SS^1$-invariant neighbourhood of $p_j$.
By Corollary~\ref{delta}, 
there exists $\delta>0$ and an $\SS^1$-invariant open neighbourhood $V\subseteq U$ of $p_j$ 
such that $\CC^\times \cdot V \cap \Psi\inv(-\delta,\delta) \subset V$.
Then  
$$\big\{ (x,\tau) \in (\C^\times \cdot V) \times [0,1]\, \big|\, 
| \tau \, \Psi(x)| < \delta \big\}$$
is an open subset of $G\inv(U)$ that contains
$G\inv(p_j)$.
Therefore, $G$ is continuous.
\end{proof}

This has an immediate corollary.

\begin{corollary}\label{c:cont fct}
Assume that we are in the situation of Lemma~\ref{l:cont fct}.
Fix  a non-zero $s \in (a_-,a_+)$, and let $U_s := \C^\times \cdot \Psi\inv(s)$.
For every $[x] \in U_s/\CC^\times$, there exists a  unique 
$\SS^1$-orbit   
$$h_s([x]) \in 
\Psi\inv(0)/\SS^1
 \cap 
\big(\overline{\CC^\times \cdot x} \big)/\SS^1;
$$
moreover, 
$h_s \colon  U_s/\CC^\times \to \Psi\inv(a_-,a_+)/\SS^1$ is a continuous map.
Additionally, if
$j_s \colon M \red{s} \SS^1 \to U_s/\CC^\times$ 
is induced by the inclusion map,
then the composition $h_s \circ j_s \colon M \red{s} \SS^1 \to \Psi\inv(a_-,a_+)/\SS^1$ is 
isotopic to the inclusion map.
\end{corollary}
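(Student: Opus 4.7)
The plan is to obtain both $h_s$ and the required isotopy directly from Lemma~\ref{l:cont fct}, by specialising or varying the parameter $\tau$. First, I would define $h_s([x])$ to be the $\SS^1$-orbit of $G(x,0)$ for $x \in U_s$, where $G$ is the map from Lemma~\ref{l:cont fct}. Since $s \in (a_-, a_+)$ we have $U_s \subseteq \CC^\times \cdot \Psi\inv(a_-,a_+)$, so this makes sense; moreover $G(x,0)$ lies in $\overline{\{e^t \cdot x \mid t \in \R\}} \subseteq \overline{\CC^\times \cdot x}$ with $\Psi(G(x,0)) = 0$, so its $\SS^1$-orbit belongs to $\Psi\inv(0)/\SS^1 \cap \overline{\CC^\times \cdot x}/\SS^1$. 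For well-definedness on $\CC^\times$-orbits, I would verify that $G(\lambda \cdot x,0) = e^{iv} \cdot G(x,0)$ for $\lambda = e^{u + iv}$; this is a short computation using the $\SS^1$-invariance of $\Psi$, which gives $f(\lambda \cdot x, 0) = f(x,0) - u$ in the notation of Lemma~\ref{l:monotone2}, combined with the commutativity of the $\CC^\times$-action.

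For uniqueness of $h_s([x])$, I would use the decomposition $\overline{\CC^\times \cdot x} = \SS^1 \cdot \overline{\{e^t \cdot x \mid t \in \R\}}$, which holds because $\SS^1$ is compact and $\CC^\times$ is abelian. Any point $y \in \Psi\inv(0) \cap \overline{\CC^\times \cdot x}$ then has the form $g \cdot z$ with $g \in \SS^1$ and $z \in \overline{\{e^t \cdot x \mid t \in \R\}}$, and $\Psi(z) = \Psi(y) = 0$; the uniqueness clause in Lemma~\ref{l:cont fct} forces $z = G(x,0)$, so $y$ lies in the same $\SS^1$-orbit as $G(x,0)$. Continuity of $h_s$ then follows from continuity of $G$ and the universal property of the quotient $U_s \to U_s/\CC^\times$.

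For the final claim, I would define $H \colon (M \red{s} \SS^1) \times [0,1] \to \Psi\inv(a_-,a_+)/\SS^1$ by $H([x]_{\SS^1}, \tau) := [G(x,\tau)]_{\SS^1}$ for a representative $x \in \Psi\inv(s)$; well-definedness on $\SS^1$-orbits is the $\SS^1$-equivariance of $G$, and continuity is part of Lemma~\ref{l:cont fct}. At $\tau = 0$ this recovers $h_s \circ j_s$, while at $\tau = 1$ the point $G(x,1)$ lies in $\overline{\{e^t \cdot x \mid t \in \R\}}$ with $\Psi(G(x,1)) = \Psi(x) = s$; since $s \neq 0$, the strict monotonicity from Lemma~\ref{l:monotone} forces $G(x,1) = x$, so $H(\cdot,1)$ is the inclusion.

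The main obstacle is the uniqueness of $h_s([x])$: the decomposition $\overline{\CC^\times \cdot x} = \SS^1 \cdot \overline{\{e^t \cdot x \mid t \in \R\}}$ is what reduces the problem to the one-variable uniqueness already built into Lemma~\ref{l:cont fct}. All other steps are direct consequences of the properties of $G$ already established there.
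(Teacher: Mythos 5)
Your proof is correct, and since the paper gives no argument beyond labelling this an "immediate" consequence of Lemma~\ref{l:cont fct}, it supplies exactly the details the authors had in mind: existence of $h_s$ from $G(\cdot,0)$, uniqueness via the decomposition $\overline{\CC^\times\cdot x}=\SS^1\cdot\overline{\{e^t\cdot x\mid t\in\R\}}$ together with the uniqueness clause of the lemma, continuity from that of $G$, and the homotopy $H(\cdot,\tau)=[G(\cdot,\tau)]$. One small point worth tightening: the $f$-based computation $f(\lambda\cdot x,0)=f(x,0)-u$ only covers the generic branch in the definition of $G$; when $x$ lies on the positive- or negative-weight invariant subspace through some $p_j$, the value $f(x,0)$ is undefined, and $G(\cdot,0)$ is instead constantly $p_j$ on that $\CC^\times$-invariant set, so $\CC^\times$-well-definedness holds trivially there. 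Alternatively, once the uniqueness of the $\SS^1$-orbit is established one can deduce well-definedness of $h_s$ directly from $\overline{\CC^\times\cdot(\lambda x)}=\overline{\CC^\times\cdot x}$, avoiding the case split altogether.
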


We now prove our main proposition.

\begin{proof}[Proof of Proposition~\ref{p:with crit pts}]
We may assume without loss of generality that $0 \in (a_-,a_+)$ (this is by hypothesis if there are fixed points).
By Lemma~\ref{l:lnfplus}, there exists a $\CC^\times$-invariant
neighbourhood  $W$ of $\Psi\inv(0) \cap M^{\SS^1}$ which
is $\CC^\times$-equivariantly biholomorphic to a neighbourhood of $\coprod_{j=1}^k\{0\}$ in 
$\coprod_{j=1}^k\CC^n$, where in each case $\CC^\times$ acts on $\CC^n$ by 
\begin{equation}\label{action}
\lambda \cdot (z_1,\dots,z_n) = (\lambda^{-1} z_1, \lambda z_2, \dots, \lambda z_n)
\end{equation}
for all $\lambda \in \CC^\times $ and $z \in \CC^n$.
Moreover, if we identify these neighbourhoods then
$$\C^\times \cdot \Psi\inv(s) = 
\begin{cases}
U_- :=
\CC^\times \cdot \Psi\inv(a_-,a_+) \smallsetminus
\Big( \coprod_{j=1}^k \big( \{ 0\} \times \CC^{n-1}   \big)  \Big)
 & \mbox{if } s \in (a_-,0) \\
U_+ 
:= \CC^\times \cdot \Psi\inv(a_-,a_+) \smallsetminus
\Big( \coprod_{j=1}^k \big( \CC \times \{ 0\}   \big)  \Big)
& \mbox{if } s \in (0,a_+). \\
\end{cases}
$$
By Proposition~\ref{p:reduction}, 
the quotients $U_\pm/\C^\times$ are naturally complex orbifolds,
and $U_\pm$ is a holomorphic $\C^\times$-bundle over $U_\pm/\C^\times$
with Euler class $\eta_\pm \in H^2(U_\pm/\C^\times)$.
Moreover, the inclusion $\Psi\inv(s_\pm) \hookrightarrow U_\pm$ induces
a biholomorphism $j_{s_\pm} \colon M \red{s_\pm} \SS^1 \to U_\pm/\C^\times$ for each $s_- \in (a_-,0)$
and $s_+ \in (0,a_+)$.
Thus, 
the Euler class
of the $\SS^1$-bundle
$\Psi\inv(s_\pm) \to M \red{s_\pm} \SS^1$ 
is $j_{s_\pm}^*( \eta_\pm)$ 
for each $s_- \in (a_-,0)$
and $s_+ \in (0,a_+)$.
By Corollary~\ref{c:cont fct}, 
there exist continuous maps $h_\pm \colon U_\pm/\C^\times
\to \Psi\inv(a_-,a_+)/\SS^1$ such that 
$h_\pm([x]) \in
\Psi\inv(0) /\SS^1 \cap \big(\overline{\C^\times \cdot x}  
\big)/\SS^1$ for all $[x] \in U_\pm/\C^\times$.
Moreover, the composition $h_\pm \circ j_{s_\pm}\colon M \red{s_\pm} \SS^1 \to \Psi^{-1}(a_-,a_+)/\SS^1$ is isotopic to the
the natural inclusion
for each $s_- \in (a_-,0)$
and $s_+ \in (0,a_+)$.
Therefore, applying Lemma~\ref{l:restriction} to $\Psi\inv(a_-,a_+)$,
there exists $\overline{\kappa} \in H^2(\Psi\inv(a_-,a_+)/\SS^1)$
so that
$$[\omega_{s_\pm}] = 
j_{s_\pm}^*\big( h_\pm^*(\overline{\kappa}) - 2 \pi  s_\pm \, \eta_\pm \big)$$
for each $s_- \in (a_-,0)$ and $s_+ \in (0,a_+)$, where $\omega_{s_\pm} \in \Omega^2(M \red{s_\pm} \SS^1)$
is the reduced symplectic form.

The quotient
$\CC\times_{\CC^\times}(\CC^{n-1}\smallsetminus\{0\})$ is the
blow-up of
$ \CC^{n-1} \cong (\CC \smallsetminus \{0\}) \times_{\CC^\times}\CC^{n-1}$ at $0$;
the blow-down map sends $[u;z_1,\dots,z_{n-1}]$ to 
$ [1; u z_1, \dots, u z_{n-1}]$.
Therefore, 
$U_{+}/\CC^\times$ is the blow-up of 
$U_{-}/\CC^\times$ at the  smooth points
$$\{x_1,\dots,x_k\} := \coprod_j \big(( \CC \smallsetminus \{0\}) \times \{ 0\}\big)/\CC^\times \subset W/\CC^\times;$$
moreover, $h_- \circ q = h_+$, where
 $q \colon U_+/ \CC^\times \to U_-/\CC^\times$ is the blow-down map.
(To see this, note that  
$h_\pm([x]) = [0] \in \C^n/\SS^1$
for all non-zero
$x \in (\C  \times \{0\}) \cup (\{0\} \times  \C^{n-1}) \subset \C^n \subset W$.
 Hence, $h_+^*(\overline\kappa) = q^*(h_-^*(\overline\kappa))$.

Finally, we complete the proof by using an argument from \cite{GS89} to show that
\begin{equation}\label{etaplus}
\eta_+ = q^*(\eta_-)  + \sum_j  \mathcal{E}_j,
\end{equation}
where $\mathcal{E}_j$ is the Poincar\'e dual of the exceptional divisor 
$E_j := q\inv(x_j)$ for each $j$.
Define a section of the $\C^\times$-bundle
$$\big(\C \smallsetminus \{0\} \big) \times \big( \C^{n-1} \smallsetminus \{0\} \big) \to
 \big(\C \smallsetminus \{0\} \big) \times_{\C^\times} \big( \C^{n-1} \smallsetminus \{0\} \big) $$
 by $\sigma[u; z_1,\dots,z_{n-1}] = (1;u z_1,\dots,u z_{n-1})$.
Let $L$ be the  $\C^\times$-bundle  over $U_+/\C^\times$  
constructed  by using $\sigma$ to glue together the $\C^\times$-bundle
$W \cap U_+ \to (W \cap U_+)/\C^\times$ and the trivial $\C^\times$-bundle  over
$U_+/\CC^\times \smallsetminus \coprod_j E_j$.
On the one hand,  $\sigma$ does not extend to a section of the
$\C^\times$-bundle
$$\C  \times \big( \C^{n-1} \smallsetminus \{0\} \big) \to
\C  \times_{\C^\times} \big( \C^{n-1} \smallsetminus \{0\} \big)$$ associated to $W \cap U_+$; instead,
it extends to a holomorphic section of the associated $\C$-bundle that is transverse to the
zero section, and vanishes  
exactly on the exceptional divisor. 
Hence, the Euler class of $L$ is $\sum_j \mathcal{E}_j$.
On the other hand, $\sigma$ does extend to a section of the $\C^\times$-bundle
$$\big(\C \smallsetminus \{0\} \big) \times  \C^{n-1}  \to
\big(\C \smallsetminus \{0\} \big) \times_{\C^\times} \C^{n-1}$$
associated to $W \cap U_-$.
Since the Euler class of the tensor product of line bundles is the sum of
their Euler classes, \eqref{etaplus} follows immediately.
\end{proof}

Finally, we generalise Proposition~\ref{p:with crit pts} so that it applies to
manifolds
constructed using Proposition~\ref{p:prop}.

\begin{proposition}\label{p:with crit pts orbifold}
Let $(M,J)$ be a complex manifold such that $\dim_\C M > 1$
with a holomorphic $\CC^\times$-action, a symplectic form $\omega\in\Omega^2(M)^{\SS^1}$ tamed by the action, and a proper moment map $\Psi\colon M\to\RR$.  
Fix $a_-<0<a_+$ so that $\Psi^{-1}(a_-,a_+)$ contains exactly $k$ fixed points; each lies in $\Psi^{-1}(0)$ and has weights $\{-2,1,\dots,1\}$. 
Then there exists a complex orbifold $(X,I)$ and classes $\kappa,\eta\in H^2(X;\RR)$ so that 
\begin{itemize}
\item for all $s\in(a_-,0)$, the reduced space $M\red{s}\SS^1$ is biholomorphically symplectomorphic to $(X,I,\sigma_s)$,
 where $\sigma_s\in\Omega^2(X)$ satisfies $[\sigma_s]=\kappa-2 \pi s \, \eta$; and
\item for all $s\in(0,a_+)$, the reduced space $M\red{s}\SS^1$ is biholomorphically symplectomorphic to $(\widehat{X},\widehat{I},\widehat{\sigma}_s)$, 
where $\widehat{\sigma}_s\in\Omega^2(\widehat{X})$ satisfies $$[\widehat{\sigma}_s]=q^*\kappa-2 \pi s \, q^*\eta- \pi s\sum_{j=1}^k\mathcal{E}_j.$$
Here, $\widehat{X}$ is the blow-up of $X$ at isolated $\Z_2$-singularities 
$x_1,\dots,x_k\in X$, the map $q\colon\widehat{X}\to X$ is the blow-down map, and 
 $\mathcal{E}_j$ is the Poincar\'e dual of the exceptional divisor $q^{-1}(x_j)$ for all $j$.
\end{itemize}
Moreover, under the identifications above, 
the Euler class of the holomorphic principal $\C^\times$-bundle 
$\CC^\times \cdot \Psi\inv(s) \to M \red{s}\SS^1$ is $\eta$
for all $s \in (a_-,0)$,
and $q^* \eta + \sum \mathcal{E}_j/2$ for all $s \in (0,a_+)$.
\end{proposition}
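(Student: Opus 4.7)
The plan is to imitate the proof of Proposition~\ref{p:with crit pts}, with the weight $-1$ replaced by $-2$, which forces the relevant quotients to carry a $\ZZ_2$-orbifold structure at finitely many points. I start by applying Lemma~\ref{l:lnfplus} to produce a $\CC^\times$-equivariant biholomorphism from a neighbourhood of $\coprod_{j=1}^k\{0\}$ in $\coprod_{j=1}^k\CC^n$, on which $\CC^\times$ acts by $\lambda\cdot(z_1,\dots,z_n)=(\lambda^{-2}z_1,\lambda z_2,\dots,\lambda z_n)$ on each summand, to a $\CC^\times$-invariant neighbourhood $W$ of the $k$ fixed points. After this identification, Lemma~\ref{l:lnfplus} also yields
$$\CC^\times\cdot\Psi^{-1}(s)=\begin{cases}U_-:=\CC^\times\cdot\Psi^{-1}(a_-,a_+)\smallsetminus\coprod_j\big(\{0\}\times\CC^{n-1}\big)&\text{if }s\in(a_-,0),\\ U_+:=\CC^\times\cdot\Psi^{-1}(a_-,a_+)\smallsetminus\coprod_j\big(\CC\times\{0\}\big)&\text{if }s\in(0,a_+).\end{cases}$$

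I then set $X:=U_-/\CC^\times$ and $\widehat X:=U_+/\CC^\times$; by Proposition~\ref{p:reduction}, these are complex orbifolds and the inclusions $\Psi^{-1}(s_\pm)\hookrightarrow U_\pm$ induce biholomorphic symplectomorphisms $M\red{s_-}\SS^1\to(X,\sigma_{s_-})$ and $M\red{s_+}\SS^1\to(\widehat X,\widehat\sigma_{s_+})$. The stabiliser of $(u,0,\dots,0)$ with $u\neq 0$ under the weights $(-2,1,\dots,1)$ equals $\{\pm 1\}\subset\CC^\times$, while all other orbits in $W\cap U_\pm$ are free; hence $X$ has exactly $k$ isolated $\ZZ_2$-singularities $x_1,\dots,x_k$, corresponding to the components of $((\CC\smallsetminus\{0\})\times\{0\})/\CC^\times\subset X$, and $\widehat X$ is smooth along the exceptional loci $E_j$. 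A direct comparison of local models shows that $(W\cap U_+)/\CC^\times=\CC\times_{\CC^\times}(\CC^{n-1}\smallsetminus\{0\})$ is precisely the orbifold blow-up $\widehat{\CC^{n-1}/\ZZ_2}$ of Definition~\ref{d:blow-up}, and the obvious map $\widehat X\to X$ is the blow-down.

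The existence of the cohomology classes $\kappa$ and $\eta$, the formula $[\sigma_s]=\kappa-2\pi s\,\eta$ for $s\in(a_-,0)$, and the reduction of the $s\in(0,a_+)$ case to computing $\eta_+$, now follow by the same argument as in Proposition~\ref{p:with crit pts}: Corollary~\ref{c:cont fct} supplies continuous maps $h_\pm\colon U_\pm/\CC^\times\to\Psi^{-1}(a_-,a_+)/\SS^1$, and Lemma~\ref{l:restriction} applied to $\Psi^{-1}(a_-,a_+)$ yields a class $\overline\kappa$ with $[\omega_{s_\pm}]=j_{s_\pm}^*(h_\pm^*\overline\kappa-2\pi s_\pm\,\eta_\pm)$, where $\eta_\pm$ denotes the Euler class of the $\CC^\times$-bundle $U_\pm\to U_\pm/\CC^\times$. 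Since every $\CC^\times$-orbit through a point of $W$ has its $\Psi=0$-limit equal to the corresponding fixed point, $h_+=h_-\circ q$, so setting $\kappa:=h_-^*\overline\kappa$ and $\eta:=\eta_-$ reduces the last formula to computing $\eta_+$ in terms of $q^*\eta$ and the $\mathcal E_j$.

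The main obstacle is this final step: showing that $\eta_+=q^*\eta+\tfrac12\sum_j\mathcal E_j$, which is responsible for the factor $\tfrac12$ in both the Euler-class assertion and the formula for $[\widehat\sigma_s]$. I will adapt the end of the proof of Proposition~\ref{p:with crit pts}: the line bundle $L:=U_+\otimes(q^*U_-)^{-1}$ is trivial on $\widehat X\smallsetminus\coprod_j E_j$, and its Euler class equals $\eta_+-q^*\eta$. In the weight $(-2,1,\dots,1)$ model, the natural gluing section $\sigma[u;z_1,\dots,z_{n-1}]=(1;\sqrt u\,z_1,\dots,\sqrt u\,z_{n-1})$ has a $\ZZ_2$-sign ambiguity, but its tensor square $\sigma^{\otimes 2}$ is a genuine holomorphic section of $L^{\otimes 2}$ that vanishes transversally on each $E_j$; hence $[L^{\otimes 2}]=\sum_j\mathcal E_j$ and so $[L]=\tfrac12\sum_j\mathcal E_j$. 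This factor of $\tfrac12$ is the same one observed in the proof of Proposition~\ref{p:blowup2}: the Euler class of the normal bundle of $E_j$ in the $\ZZ_2$-orbifold blow-up $\widehat X$ equals twice the positive generator of $H^2(E_j;\ZZ)$, so $\mathcal E_j$ restricts to $E_j$ with twice the multiplicity it would in the smooth case.
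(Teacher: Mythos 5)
Your proposal is correct and follows essentially the same route as the paper: repeat the first paragraph of the proof of Proposition~\ref{p:with crit pts} with the weight $-1$ replaced by $-2$, identify $U_+/\CC^\times$ as the $\Z_2$-orbifold blow-up of $U_-/\CC^\times$ via Definition~\ref{d:blow-up}, reuse Lemma~\ref{l:restriction} and Corollary~\ref{c:cont fct}, and reduce everything to the Euler class relation $2\eta_+ = 2q^*\eta_- + \sum_j\mathcal{E}_j$. Your treatment of the factor $\tfrac12$ is a mild repackaging of the paper's: the paper glues the trivial bundle to the honest $\CC^\times$-bundle $(W\cap U_+)/\Z_2 \to (W\cap U_+)/\CC^\times$ (where the section $\sigma[u;z]=[1;\sqrt u\,z]$ is well defined without ambiguity, since the two square roots differ by the $\Z_2$ already being quotiented out), and then invokes that the Euler class of $U_\pm/\Z_2 \to U_\pm/\CC^\times$ is $2\eta_\pm$; you instead pass to the tensor square $L^{\otimes 2}$ of the formal bundle $L = U_+\otimes(q^*U_-)^{-1}$ and divide by two in real cohomology. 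These are the same observation in dual form, and the transversality of $\sigma^{\otimes 2}$ along $E_j$ is correct (locally its fibre coordinate is $u z_1^2$, vanishing to first order in $u$). If you want to be scrupulous you should note that $q^*U_-$ is not literally a holomorphic line bundle over $\widehat X$ -- $U_-\to X$ is only an orbifold bundle with $\Z_2$-isotropy over the $x_j$, and $q$ is not a smooth orbifold morphism -- which is precisely why the paper glues $(W\cap U_+)/\Z_2$ rather than forming the tensor product directly; but the computation of $\eta_+-q^*\eta_-$ in $H^2(\widehat X;\R)$ that results is the same.
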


\begin{proof}
The first paragraph of the proof of Proposition~\ref{p:with crit pts}
applies without change, except that  \eqref{action} should be replaced by
$$\lambda \cdot (z_1,\dots,z_n) = (\lambda^{-2} z_1, \lambda z_2, \dots, \lambda z_n).$$
By Definition~\ref{d:blow-up}, $U_+/\CC^\times$ 
is the blow-up of $U_-/\C^\times$ at the  isolated $\Z_2$-singularities 
$$\{x_1,\dots,x_k\} := \coprod_j \big(( \CC \smallsetminus \{0\}) \times \{ 0\}\big)/\CC^\times;$$
moreover, 
$h_- \circ q = h_+$, 
where $q \colon U_+/\CC^\times \to U_-/\CC^\times$ is the blow-down map.

In analogy with the proof of Proposition~\ref{p:with crit pts}, this reduces the argument to showing that
\begin{equation}\label{e:with crit pts orbifold}
2 \eta_+ = 2 q^*(\eta_-) + \sum_j \mathcal{E}_j,
\end{equation}
where $\mathcal{E}_j$ is the Poincar\'e dual of the exceptional divisor $E_j :=
q\inv(x_j)$.
Define a section of the $\CC^\times$-bundle $$\big(\C \smallsetminus \{0\}\big) \times_{\Z_2} \big(\C^{n-1} \smallsetminus \{0\} \big)
\to \big(\C \smallsetminus \{0\}\big) \times_{\C^\times} \big(\C^{n-1} \smallsetminus \{0\}\big)$$
by $\sigma[u; z_1,\dots,z_{n-1}] = [1; \sqrt{u} z_1, \dots, \sqrt{u} z_{n-1}]$, and construct a $\CC^\times$-bundle $L$ over $U_+/\CC^\times$ by using $\sigma$ to glue together the $\C^\times$-bundle $(W \cap U_+)/\Z_2 \to (W \cap U_+)/\C^\times$ and the trivial bundle over $U_+/\C^\times \smallsetminus \coprod_j E_j$. 
The Euler class of $L$ is $\sum_j \mathcal{E}_j$ 
because  $\sigma$ extends to a holomorphic section of the $\CC$-bundle associated
to the $\C^\times$-bundle 
$$\C \times_{\Z_2}\big( \C^{n-1} \smallsetminus \{0\}\big) \to \C \times_{\C^\times} \big(\C^{n-1} \smallsetminus \{0\}\big)$$ that is transverse to the zero section and vanishes exactly on the exceptional divisor. 
Since  $\sigma$ extends to a section of the $\CC^\times$-bundle $$\big(\C \smallsetminus \{0\} \big)\times_{\Z_2} \C^{n-1} \to \big(\C \smallsetminus \{0\}\big) \times_{\C^\times} \C^{n-1},$$ and
since the Euler class of $U_\pm/\Z^2 \to U_\pm/\C^\times$ is twice the Euler class  of $U_\pm \to U_\pm/\C^\times$,
this proves \eqref{e:with crit pts orbifold}.

\end{proof}

\begin{remark}\label{r:toricbirational}
Let the circle  $\SS^1 \times \{1\}^{n-1} \subset (\SS^1)^n$
act on a symplectic toric manifold $(M,\omega,\Phi)$ with moment polytope
$\Delta$,
as described in Remark~\ref{r:toricreduction}, satisfying the assumptions of Proposition~\ref{p:with crit pts} (or Proposition~\ref{p:with crit pts orbifold}).
It is straightforward to see that, as we vary $s_- \in (a_-,0)$,
the  moment polytopes
$\Delta \cap \big(\{s_-\} \times \R^{n-1} \big)$
of the reduced spaces 
$M\red{s_-}\SS^1$ all have the same facets, but the position
of these facets varies linearly in $s_-$.  The identical process occurs as
we vary the moment polytopes of the reduced spaces over $s_+ \in (0,a_+)$.
However, the latter polytopes have $k$ extra facets, corresponding
to the blow-up  at the $k$ fixed points.
 (See Remarks~\ref{r:toricreduction} and \ref{r:toricblowup}).
\end{remark}

\begin{remark}\label{r:reverse 3}
By reversing the
action, as described in Remark~\ref{r:reverse action}, we see that
Proposition~\ref{p:with crit pts} (respectively, Proposition~\ref{p:with crit pts orbifold}) 
still holds with the following modifications:
Replace $\{-1,1,\dots,1\}$ by $\{1,-1,\dots,-1\}$ 
(respectively,  replace $\{-2,1,\dots,1\}$ by $\{2,-1,\dots,-1\}$);  
replace each $(0,a_+)$ by $(a_-,0)$  and each $(a_-,0)$ by $(0,a_+)$; and
replace each $\sum_j \mathcal{E}_j$ by 
$- \sum_j \mathcal{E}_j$.
\end{remark}


\end{document}